\documentclass[12pt,twoside,a4paper]{article}

\usepackage{amsmath,amsthm,amssymb,tikz-cd,hyperref}
\allowdisplaybreaks

\newtheorem{theorem}{Theorem}[section]					
\newtheorem{lemma}[theorem]{Lemma}
\newtheorem{corollary}[theorem]{Corollary}
\newtheorem{proposition}[theorem]{Proposition}
\theoremstyle{definition}
\newtheorem{definition}[theorem]{Definition}
\theoremstyle{definition}
\newtheorem{example}[theorem]{Example}

\newcommand{\N}{\mathbb{N}}								
\newcommand{\Z}{\mathbb{Z}}
\newcommand{\Q}{\mathbb{Q}}

\newcommand{\C}{\mathbb{C}}
\newcommand{\F}{\mathbb{F}}
\newcommand{\K}{\mathbb{K}}
	
\newcommand{\CC}{\mathcal{C}}							
\newcommand{\DD}{\mathcal{D}}
\newcommand{\FF}{\mathcal{F}}						
\newcommand{\OO}{\mathcal{O}}
\newcommand{\RR}{\mathcal{R}}
\renewcommand{\SS}{\mathcal{S}}

\newcommand{\catname}[1]{{\normalfont\textbf{#1}}}		

\newcommand{\biset}[2]{{}_{#1}\catname{set}_{#2}}

\newcommand{\lmod}[1]{{}_{#1}\catname{mod}}
\newcommand{\lMod}[1]{{}_{#1}\catname{Mod}}
\newcommand{\lset}[1]{{}_{#1}\catname{set}}

\newcommand{\rset}[1]{\catname{set}_{#1}}

\newcommand{\Aut}{\operatorname{Aut}}

\newcommand{\Def}{\operatorname{Def}}
\newcommand{\Defres}{\operatorname{Defres}}

\newcommand{\dsum}{\oplus}
\newcommand{\Dsum}{\bigoplus}
\newcommand{\End}{\operatorname{End}}

\newcommand{\Gal}{\operatorname{Gal}}

\newcommand{\gp}[1]{\langle#1\rangle}
\newcommand{\Hom}{\operatorname{Hom}}

\newcommand{\id}{\operatorname{id}}

\newcommand{\im}{\operatorname{im}}
\newcommand{\Ind}{\operatorname{Ind}}
\newcommand{\Indinf}{\operatorname{Indinf}}
\newcommand{\Inf}{\operatorname{Inf}}

\newcommand{\into}{\hookrightarrow}
\newcommand{\Irr}{\operatorname{Irr}}
\newcommand{\iso}{\cong}
\newcommand{\isoto}{\overset{\sim}{\to}}
\newcommand{\Iso}{\operatorname{Iso}}

\newcommand{\lin}{\operatorname{lin}}

\newcommand{\Mult}{\operatorname{Mult}}
\newcommand{\nor}{\trianglelefteq}

\newcommand{\onto}{\twoheadrightarrow}
\newcommand{\op}[1]{#1^{\text{op}}}
\newcommand{\Out}{\operatorname{Out}}
\newcommand{\rank}{\operatorname{rank}}
\newcommand{\Res}{\operatorname{Res}}
\newcommand{\semil}{\rtimes}

\newcommand{\set}[1]{\left\{#1\right\}}

\newcommand{\spn}{\operatorname{span}}

\newcommand{\subgp}{\leq}

\newcommand{\tensor}{\otimes}

\newcommand{\Tor}{\operatorname{Tor}}

	\makeatletter
	\newcommand{\tpitchfork}{%
		\vbox{
			\baselineskip\z@skip
			\lineskip-.52ex
			\lineskiplimit\maxdimen
			\m@th
			\ialign{##\crcr\hidewidth\smash{$-$}\hidewidth\crcr$\pitchfork$\crcr}
		}%
	}
	\makeatother

\newcommand{\RES}{\underline{\Res}}
\newcommand{\IND}{\underline{\Ind}}

\title{On the image of the trivial source ring in the ring of virtual characters of a finite group}
\author{John Revere McHugh}

\begin{document}

\maketitle

\begin{abstract}
	We examine the cokernel of the canonical homomorphism from the trivial source ring of a finite group to the ring of $p$-rational complex characters. We use Boltje and Co\c{s}kun's theory of fibered biset functors to determine the structure of the cokernel. An essential tool in the determination of this structure is Bouc's theory of rational $p$-biset functors.
\end{abstract}

\section{Introduction}\label{sec:intro}

Associated to a finite group $G$ are various representation rings which encode the actions of the group on sets, vector spaces, or other structures. Typical examples are the virtual character ring $R_{\C}(G)$, the Burnside ring $B(G)$, and the trivial source ring $T_{\OO}(G)$ (for $\OO$ a complete discrete valuation ring). The theory of \textit{biset functors}, introduced by Bouc in \cite{Bouc_1996} and \cite{Bouc_2010}, provides a means of unifying the constructions $G\mapsto R_{\C}(G)$, $G\mapsto B(G)$, etc. by realizing each as an additive functor defined on the \textit{biset category} $\CC$. The objects of $\CC$ are finite groups, and $\CC$ is generated as a preadditive category by five types of morphisms known as \textit{induction}, \textit{restriction}, \textit{inflation}, \textit{deflation}, and \textit{isomorphism}. The images of these morphisms under a given representation ring (viewed now as a functor defined on $\CC$) are the usual operations from the representation theory of finite groups. In \cite{Boltje_2018} Boltje and Co\c{s}kun expand this framework by defining, for a fixed abelian group $A$, the $A$\textit{-fibered biset category} $\CC^{A}$ and $A$\textit{-fibered biset functors} defined on it. Endowing a biset functor with the extra structure of fibered biset functor grants an additional type of operation: informally speaking, that of ``multiplication'' by a 1-dimensional $A$-character.

Several fundamental theorems have reinterpretations within this framework. For example, recall that for each finite group $G$ there is a \textit{linearization morphism} $\lin_{G}:B^{\C^{\times}}(G)\to R_{\C}(G)$ from the $\C^{\times}$-monomial Burnside ring to the virtual character ring. It turns out that the maps $\lin_{G}$ are the components of a natural transformation $\lin:B^{\C^{\times}}\to R_{\C}$ between $\C^{\times}$-fibered biset functors, and Brauer's Induction Theorem is equivalent to the statement that this natural transformation is surjective. From this perspective, some questions that arise are: (1) given a pair of fibered biset functors, what are the natural transformations between them?  (2) given a natural transformation of fibered biset functors, what is its image/kernel/cokernel, etc.?

In this note we examine the cokernel of a certain natural transformation $\kappa$ from the trivial source ring to a subring of the virtual character ring. Let us briefly recall the definitions. If $\OO$ is a complete discrete valuation ring with field of fractions $\K$ of characteristic 0 and residue field $k$ of positive characteristic $p$ then the triple $(\K,\OO,k)$ is called a $p$\textit{-modular system}. Assume that $k$ is algebraically closed. If $G$ is a finite group, recall that a finitely generated $\OO G$-module is a \textit{trivial source} (or $p$\textit{-permutation})\textit{ module} if it is isomorphic to a direct summand of a permutation $\OO G$-module. The Grothendieck ring of the category of trivial source modules, with respect to split short exact sequences, is denoted $T_{\OO}(G)$ and is called the \textit{trivial source ring} (see \cite[Section~5.5]{Benson_1991_vol1}). We also have $R_{\K}(G)$, the Grothendieck ring of the category of finitely generated $\K G$-modules. Extension of scalars from $\OO$ to $\K$ induces for each finite group $G$ a homomorphism $\kappa_{G}:T_{\OO}(G)\to R_{\K}(G)$, and in fact the maps $\kappa_{G}$ are the components of a natural transformation $\kappa:T_{\OO}\to R_{\K}$ between fibered biset functors. Let $\mu_{p'}$ denote the group of complex roots of unity whose orders are not divisible by $p$. Then because of the assumption on $k$ we can identify $K=\Q(\mu_{p'})$ as a subfield of $\K$, and we can identify $R_{K}(G)$ as a subring of $R_{\K}(G)$. By a theorem of Dress (specifically, \cite[Theorem~1]{Dress_1975}), for each finite group $G$ the image of $\kappa_{G}$ is contained in $R_{K}(G)$. Thus we may regard $\kappa$ as a natural transformation from $T_{\OO}$ to $R_{K}$. 

Our motivation for studying the cokernel $R_{K}/\im(\kappa)$ comes from recent work of Boltje and Perepelitsky. Let $G$ and $H$ be finite groups, let $A$ be a block algebra of $\OO G$, and let $B$ be a block algebra of $\OO H$. In \cite{Boltje_2020} Boltje and Perepelitsky define a $p$\textit{-permutation equivalence} between $A$ and $B$ to be an element $\gamma\in T^{\Delta}(A,B)$, the subgroup of the Grothendieck group of trivial source $(A,B)$-bimodules spanned by the isomorphism classes of indecomposable trivial source bimodules having twisted diagonal vertices, that satisfies
\begin{equation*}
	\gamma\underset{H}{\cdot}\gamma^{\circ}=[A]\in T^{\Delta}(A,A)\qquad\text{and}\qquad\gamma^{\circ}\underset{G}{\cdot}\gamma=[B]\in T^{\Delta}(B,B).
\end{equation*}
Here $\gamma^{\circ}$ denotes the $\OO$-dual of $\gamma$ and $\underset{H}{\cdot}$ is the map induced by the tensor product over $\OO H$. The authors show (see \cite[Theorem~1.6 and Theorem~15.4]{Boltje_2020}) that a $p$-permutation equivalence $\gamma\in T^{\Delta}(A,B)$ induces an isotypy between $A$ and $B$. The perfect isometries that make up this induced isotypy are elements of $\im(\kappa_{C})$ for various groups $C$. Thus if one wishes to lift a given isotypy to a $p$-permutation equivalence one must first check that the perfect isometries of the isotypy are contained in the image of $\kappa$. In other words, the cokernel $R_{K}/\im(\kappa)$ measures a first obstruction to the problem of lifting an isotypy to a $p$-permutation equivalence.

It turns out that the image of $\kappa$ almost always coincides with $R_{K}$ (see Corollary \ref{cor:quotientisF2space}):
\begin{theorem}\label{thm:thmforintro}
	If $p\neq 2$ then $\im(\kappa)=R_{K}$. If $p=2$ then for any finite group $G$ the quotient $R_{K}(G)/\im(\kappa_{G})$ is a finite-dimensional vector space over $\F_{2}$.
\end{theorem}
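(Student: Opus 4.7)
Since $\kappa$ is a natural transformation of $\C^{\times}$-fibered biset functors, the assignment $G \mapsto R_{K}(G)/\im(\kappa_{G})$ inherits the structure of such a functor; call it $Q$. My plan is to analyze $Q$ using the structural theory of fibered biset functors developed in \cite{Boltje_2018}, together with Bouc's rational $p$-biset functor machinery, with the aim of showing $Q = 0$ for odd $p$ and of controlling $Q$ at $p = 2$.

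For odd $p$, I would first verify directly that $\kappa_{G}$ is surjective when $G$ has the form $Z \times P$ with $Z$ cyclic of $p'$-order and $P$ a $p$-group (a Brauer elementary group at $p$). Over such groups the indecomposable trivial source $\OO G$-modules admit an explicit description as twists of the permutation modules $\OO[G/H]$ by lifts of simple $kZ$-modules, and a Möbius-inversion argument on the subgroup lattice shows that their characters $\Z$-span $R_{K}(G)$. Because $\im(\kappa)$ is automatically closed under the induction morphisms --- this is precisely the naturality of $\kappa$ --- one then deduces surjectivity for arbitrary $G$ by invoking a Brauer-type induction theorem for $p$-rational characters. The finite-dimensionality claim in the $p = 2$ case is in any event immediate from the fact that each $R_{K}(G)$ is a finitely generated free abelian group, so the whole content of the $p = 2$ assertion will be $2$-torsion of $Q(G)$.

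The remaining obstacle, which I expect to be the central technical point, is to show that $2 \cdot R_{K}(G) \subseteq \im(\kappa_{G})$ for every $G$ when $p = 2$. The natural plan is to reduce to the simple composition factors of $Q$ as a fibered biset functor --- the existence of such a composition filtration is supplied by \cite{Boltje_2018} --- and then rule out any factor on which $2$ could act nontrivially. Since inflation and induction from $2'$-subgroups transport the problem inside the $\im(\kappa)$-closed part, the surviving simple summands of $Q$ should be concentrated at $2$-groups, where deflation is tightly constrained. At that point I would bring in Bouc's theory of rational $p$-biset functors, applied to the subfunctor of $R_{K}$ generated by characters from $2$-groups, to pin down the possible obstructions. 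The ultimate source of these obstructions, I expect, is the classical Schur-index phenomenon over $\Q_{2}$ tied to quaternionic characters, and the same phenomenon is the reason the obstruction is $2$-torsion and not merely torsion of unspecified order.
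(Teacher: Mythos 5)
Your overall architecture (treat $R_K/\im(\kappa)$ as a fibered biset functor, restrict to $p$-groups, and leverage Bouc's rationality together with Roquette-type Schur index facts) is aligned with the paper's. But the odd-$p$ argument as sketched has a genuine gap. You propose to verify surjectivity of $\kappa_G$ only on $p$-elementary groups $Z\times P$ (with $Z$ cyclic of $p'$-order and $P$ a $p$-group) and then ``deduce surjectivity for arbitrary $G$ by invoking a Brauer-type induction theorem for $p$-rational characters.'' No such theorem reduces to that class of subgroups. The relevant result, the Witt--Berman theorem over $K=\Q(\mu_{p'})$, reduces to $\Q(\mu_{p'})$-elementary subgroups. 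For $p$ odd these include semidirect products $\gp{x}\semil Q$ where $Q$ is a $q$-group (with $q\neq p$) acting nontrivially on the $p$-part of $\gp{x}$ --- e.g., $D_{2p}$ with $p$ odd --- and these are not direct products. Handling precisely these groups is the technical core of the paper's proof: a Clifford-theoretic analysis (inertia group $T = C_G(x)$, bijection $\Irr(T\mid\lambda_i)\to\Irr(G\mid\lambda_i)$, and a reduction showing the Galois class sum $\overline\chi$ equals $\Ind_Q^G(1_Q)-\Ind_{\gp{x^{p^{a-1}}}Q}^G(1)$ after a further quotient by $C_Q(x)$). Your proposal has no mechanism to treat these groups, so the $p\neq 2$ case as argued is incomplete.

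Two smaller points. First, finite-dimensionality over $\F_2$ is not ``immediate'' from $R_K(G)$ being finitely generated free abelian: a quotient of a finitely generated free abelian group can have a nonzero free part, so you must first establish that the quotient is $2$-torsion (which then, with finite generation, yields finiteness) --- the logical order is reversed. Second, for $p=2$ your appeal to ``composition factors'' and ``pinning down the obstructions'' is too vague to carry the $2$-torsion claim; what makes the paper's argument work is the explicit adjunction $\IND\dashv\RES$ between $\FF^{\mu_{p'}}$ and $\FF_p$, producing a surjection $\IND(\overline R_\Q/R_\Q)\onto R_K/\im(\kappa)$ once one shows a suitable transformation $\alpha:\IND(\RES(R_K))\to R_K$ is surjective. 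Since $\IND$ of a $2$-torsion functor is $2$-torsion, Roquette's theorem on $2$-groups then does all the work. That adjunction step is what is missing from your outline and what converts the $p$-group information into a statement about all finite groups.
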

\noindent Thus when $p=2$ the cokernel $R_{K}/\im(\kappa)$ is a nonzero fibered biset functor for the fiber group $A=\mu_{2'}$. In Theorem \ref{thm:subfunctorsofRK/imkappa} we determine the complete subfunctor structure of this quotient. As part of this classification it is shown that $R_{K}/\im(\kappa)$ is \textit{generated} as a fibered biset functor by the unique faithful irreducible character of the quaternion group $Q_{8}$. If ``the problem'' with the prime $2$ is that $\im(\kappa)\neq R_{K}$, then in some sense this result identifies $Q_{8}$ as the source of the problem. We also provide, in Corollary \ref{cor:detectionthm}, a necessary and sufficient condition for a character $\chi\in R_{K}(G)$ to be an element of $\im(\kappa_{G})$.

The characterization of the subfunctors of $R_{K}/\im(\kappa)$ makes use of Bouc's theory of \textit{rational} $p$\textit{-biset functors} in an essential way. These are functors that are defined on the full subcategory $\CC_{p}$ of the biset category whose objects are $p$-groups and behave like the functor $R_{\Q}$ of rational representions (we recall the exact definition in Section \ref{sec:rationalpbisetfunctors}). The class of rational $p$-biset functors is closed under taking subfunctors and taking quotients, and the subfunctors of a fixed rational $p$-biset functor have been completely classified by Bouc (see \cite[Section~10.2]{Bouc_2010}). In Section \ref{sec:subfunctorsofRKmodimkappa} we show that the restriction of $R_{K}/\im(\kappa)$ to $\CC_{p}$ is a rational $p$-biset functor; in fact, it is equal to $\overline{R}_{\Q}/R_{\Q}$ (here, if $G$ is a finite group then $\overline{R}_{\Q}(G)$ denotes the ring of $\Q$-valued virtual characters of $G$. In subsection \ref{subsec:examples} we will see that $\overline{R}_{\Q}$ has a biset functor structure, hence restricts to a $p$-biset functor). Using rationality, we determine the subfunctors of $\overline{R}_{\Q}/R_{\Q}$ in Theorem \ref{thm:subfunctorsofRbarQ/RQ}. We then find, through \textit{induction} of fibered biset functors, that the subfunctor structure of $R_{K}/\im(\kappa)$ is identical to that of $\overline{R}_{\Q}/R_{\Q}$. In particular, the nonzero subfunctors (in the case $p=2$) are generated by characters of the generalized quaternion groups and both functors are uniserial. Moreover, Theorem \ref{thm:thmforintro} reflects a theorem of Roquette, which is given in \cite[Corollary~10.14]{Isaacs_1976}: if $p\neq 2$ then $\overline{R}_{\Q}(P)=R_{\Q}(P)$ for any finite $p$-group $P$ and if $p=2$ then the quotient $\overline{R}_{\Q}(P)/R_{\Q}(P)$ is a finite-dimensional vector space over $\F_{2}$.

The author would like to thank his advisor, Robert Boltje, for his constant support and for donating generous amounts of his time toward the completion of this project. 

\section{Characters and Schur Indices}

We begin by setting some of the notation that will be used throughout. Let $G$ be a finite group and let $K$ be a field of characteristic 0. The category of finitely generated left $KG$-modules is denoted $\lmod{KG}$. The corresponding Grothendieck ring, with respect to short exact sequences, is denoted $R_{K}(G)$. If $V\in\lmod{KG}$ we write $[V]$ for the isomorphism class of $V$ and, abusively, the image of $V$ in $R_{K}(G)$. The ring $R_{K}(G)$ is identified with the virtual character ring of $KG$, which is the subring of the ring of $K$-valued class functions on $G$ generated by the characters of the irreducible $KG$-modules. The set $\Irr_{K}(G)$ of irreducible characters of $KG$ is a $\Z$-basis of $R_{K}(G)$. If $L$ is an extension of $K$ then extension of scalars from $K$ to $L$ induces an injective ring homomorphism $R_{K}(G)\into R_{L}(G)$. Since this homomorphism is compatible with the identification of $R_{K}(G)$ with the virtual character ring of $KG$ we identify $R_{K}(G)$ as a subring of $R_{L}(G)$.

Let $L\supseteq K$ be an extension of fields. If $\chi\in\Irr_{L}(G)$ then $K(\chi)$ denotes the subfield of $L$ generated by $K$ and the character values $\chi(g)$, $g\in G$. If also $\psi\in\Irr_{L}(G)$ then $\chi$ and $\psi$ are said to be \textit{Galois conjugate over }$K$ if $K(\chi)=K(\psi)$ and if there exists an automorphism $\sigma\in\Gal(K(\chi)/K)$ such that $\sigma(\chi(g))=\psi(g)$ for all $g\in G$. In this case we write ${}^{\sigma}\chi=\psi$. Keeping $K$ and $L$ fixed any $\chi\in\Irr_{L}(G)$ determines a \textit{Galois class sum} $\overline{\chi}$, defined to be the sum of the distinct Galois conjugates of $\chi$ over $K$.

When $L$ is algebraically closed the ring of $K$-valued elements of $R_{L}(G)$ is denoted $\overline{R}_{K}(G)$. If $\chi_{1},\chi_{2},\ldots,\chi_{r}$ are representatives for the equivalence classes of $\Irr_{L}(G)$ with respect to Galois conjugacy over $K$ then the Galois class sums $\overline{\chi_{i}}$, $1\leq i\leq r$, form a $\Z$-basis of $\overline{R}_{K}(G)$. Now if $\chi\in\Irr_{L}(G)$ then some positive multiple $m\chi$ of $\chi$ is the character of a $K(\chi)G$-module. The least $m$ for which this holds is the \textit{Schur index of }$\chi$\textit{ over }$K$ and is denoted $m_{K}(\chi)$.\footnote{A thorough exposition of the theory of Schur indices may be found in \cite[Chapters 9 and 10]{Isaacs_1976}.} If $\theta\in\Irr_{K}(G)$ then $\theta=m_{K}(\chi_{i})\overline{\chi_{i}}$ for some unique $\chi_{i}$ among the representatives above. Furthermore, any character of the form $m_{K}(\chi_{i})\overline{\chi_{i}}$ is afforded by a (unique, up to isomorphism) irreducible $KG$-module. Therefore the characters $m_{K}(\chi_{i})\overline{\chi_{i}}$, $1\leq i\leq r$, form a $\Z$-basis of $R_{K}(G)$. Consequently, as abelian groups we have
\begin{equation}\label{eqn:RbarmodR}
\overline{R}_{K}(G)/R_{K}(G)\iso\prod_{i=1}^{r}\Z/m_{K}(\chi_{i})\Z.
\end{equation}

For the remainder of this section we take $L=\C$. For ease, set $\Irr(G)=\Irr_{\C}(G)$.

\begin{lemma}\label{lem:roquette}
	Let $P$ be a $p$-group, where $p\in\N$ is prime, and let $\chi\in\Irr(P)$. If $m_{\Q}(\chi)\neq 1$ then $p=2$, and if $p=2$ then $m_{\Q}(\chi)=1$ or $2$. In particular, $R_{\Q}(P)\neq\overline{R}_{\Q}(P)$ only when $P$ is a $2$-group, and in this case $\overline{R}_{\Q}(P)/R_{\Q}(P)$ is a (finite-dimensional) vector space over $\F_{2}=\Z/2\Z$.
\end{lemma}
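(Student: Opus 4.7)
The plan is to extract the lemma essentially from the classical theorem of Roquette on Schur indices for $p$-groups, which is already cited in Section \ref{sec:intro}.

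The first two sentences are precisely the content of Roquette's theorem, as recorded in \cite[Corollary~10.14]{Isaacs_1976}: for any irreducible complex character $\chi$ of a $p$-group, $m_{\Q}(\chi)=1$ unless $p=2$, in which case $m_{\Q}(\chi)\in\{1,2\}$. The cleanest approach is simply to cite Isaacs. Were one to reprove the theorem, the standard route combines two ingredients: (i) the Brauer--Speiser theorem, which gives $m_{\Q}(\chi)\leq 2$ whenever $\chi$ is real-valued; and (ii) a local argument showing that every non-real irreducible character of a $p$-group has Schur index $1$ over $\Q$. Step (ii) is the substantive one and is handled by computing the local Schur indices $m_{\Q_{\ell}}(\chi)$ at every place $\ell$ of $\Q$ using the structure of division algebras with $p$-power index over local fields, and then assembling them into a global Schur index via Brauer--Hasse--Noether. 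This local computation is where the real work lies, and would be the main obstacle in a self-contained proof; but since Roquette's theorem is invoked as a known result, there is essentially no obstacle.

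Once the statement about Schur indices is in hand, the ``In particular'' claim is immediate from the abelian-group isomorphism (\ref{eqn:RbarmodR}). Picking representatives $\chi_{1},\ldots,\chi_{r}$ of the Galois conjugacy classes of $\Irr(P)$ over $\Q$ (as in the discussion preceding (\ref{eqn:RbarmodR})), we obtain
\begin{equation*}
\overline{R}_{\Q}(P)/R_{\Q}(P)\iso\prod_{i=1}^{r}\Z/m_{\Q}(\chi_{i})\Z.
\end{equation*}
If $p$ is odd, every factor on the right vanishes by the first part of the lemma, so $R_{\Q}(P)=\overline{R}_{\Q}(P)$. If $p=2$, each $m_{\Q}(\chi_{i})$ is either $1$ or $2$, so every nonzero factor is a copy of $\F_{2}$ and the product is a finite-dimensional $\F_{2}$-vector space, as claimed.
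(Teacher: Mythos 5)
Your proposal is correct and takes essentially the same route as the paper: cite Roquette's theorem as recorded in \cite[Corollary~10.14]{Isaacs_1976} for the Schur index bound, then deduce the ``in particular'' statement directly from the abelian-group isomorphism (\ref{eqn:RbarmodR}). The aside sketching how one would reprove Roquette via Brauer--Speiser and local Schur indices is extraneous to the argument but does not affect its correctness.
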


\begin{proof}
	The first part of the lemma is just a particular case of a result of Roquette \cite{Roquette_1958} --- a proof is also given in \cite[Corollary~10.14]{Isaacs_1976}. The second part follows from the first and the isomorphism given in (\ref{eqn:RbarmodR}), above.
\end{proof}

\begin{lemma}\label{lem:schurindicescyclicdihedral}
	Let $P$ be a $2$-group. If $P$ is cyclic, dihedral, or semidihedral then $m_{\Q}(\chi)=1$ for all $\chi\in\Irr(P)$.
\end{lemma}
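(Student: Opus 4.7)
My plan is to treat the cyclic, dihedral, and semidihedral cases separately and in each case exhibit an explicit representation affording $\chi$ over $\Q(\chi)$, thereby showing $m_{\Q}(\chi)=1$.

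The cyclic case is immediate: every $\chi\in\Irr(P)$ is one-dimensional, so the map $g\mapsto\chi(g)$ defines a representation of $P$ over $\Q(\chi)$. In the dihedral and semidihedral cases, every linear character factors through the abelianization $P/[P,P]$, which is elementary abelian of order at most $4$, so takes values in $\{\pm 1\}\subseteq\Q$ and again has Schur index $1$. The remaining task is to handle the $2$-dimensional irreducible characters.

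Suppose $P=\langle r,s\rangle$ is dihedral or semidihedral (with $n\geq 3$), $r$ generates the unique maximal cyclic subgroup $C$ of index $2$, and let $t\in\Z$ satisfy $srs^{-1}=r^{t}$ (so $t=-1$ in the dihedral case and $t=2^{n-2}-1$ in the semidihedral case). Every $2$-dimensional $\chi\in\Irr(P)$ has the form $\Ind_{C}^{P}(\theta)$ for some linear character $\theta$ of $C$ satisfying $\theta(r)^{t}\neq\theta(r)$. In any representation $\rho$ affording $\chi$, the matrix $\rho(r)$ has eigenvalues $\theta(r)$ and $\theta(r)^{t}$, so its trace is $\chi(r)\in\Q(\chi)$ and its determinant is $\theta(r)^{1+t}\in\{\pm 1\}$. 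I would take $\rho(r)$ to be the companion matrix of $x^{2}-\chi(r)x+\theta(r)^{1+t}$ (which has entries in $\Q(\chi)$) and then solve for $\rho(s)\in\GL_{2}(\Q(\chi))$ subject to the defining relations $\rho(s)^{2}=I$ and $\rho(s)\rho(r)\rho(s)^{-1}=\rho(r)^{t}$.

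The main obstacle is this final step. In the dihedral case, the antidiagonal matrix $\rho(s)=\begin{pmatrix}0 & 1\\ 1 & 0\end{pmatrix}$ works uniformly and the verification is mechanical. In the semidihedral case, the congruence $t^{2}\equiv 1\pmod{2^{n-1}}$ implies that $\rho(r)^{t}$ has the same characteristic polynomial as $\rho(r)$ and is therefore $\GL_{2}(\Q(\chi))$-conjugate to it. Parameterizing order-$2$ matrices in $\GL_{2}(\Q(\chi))$ as $\rho(s)=\begin{pmatrix}a & b\\ c & -a\end{pmatrix}$ with $a^{2}+bc=1$ and imposing the braid relation $\rho(s)\rho(r)=\rho(r)^{t}\rho(s)$ yields a small linear system that admits solutions with entries in $\Q(\chi)$; carrying out this routine calculation completes the proof.
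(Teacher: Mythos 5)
Your approach is genuinely different from the paper's: the paper simply cites Wedderburn's theorem for the cyclic case and Feit's book for the dihedral/semidihedral cases, whereas you propose a self-contained, explicit construction of a $\Q(\chi)$-rational representation. This is a nice, more elementary route, and the cyclic case, the handling of linear characters, and the dihedral case (with $\rho(s)$ the antidiagonal matrix) are all complete and correct.

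However, there is a genuine gap in the semidihedral case. Establishing that $\rho(r)$ and $\rho(r)^{t}$ are $\GL_{2}(\Q(\chi))$-conjugate only produces \emph{some} intertwiner $T_{0}$; it does not produce one with $T_{0}^{2}=I$. In your parametrization, the braid relation $T\rho(r)=\rho(r)^{t}T$ together with $\tr(T)=0$ cuts the space of candidate $\rho(s)$ down to a $\Q(\chi)$-line, and the remaining condition $a^{2}+bc=1$ is a \emph{quadratic} constraint. Whether that conic has a $\Q(\chi)$-rational point is exactly the Schur-index question; it is not automatic, and if $m_{\Q}(\chi)$ were $2$ the system would have solutions only over a quadratic extension of $\Q(\chi)$. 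So the sentence ``yields a small linear system that admits solutions with entries in $\Q(\chi)$; carrying out this routine calculation completes the proof'' is asserting the conclusion rather than deriving it. Moreover, since $t$ varies with $n$, you cannot simply ``carry out the calculation'' case by case: computing $\rho(r)^{t}$ and then solving is an infinite family of computations, so you need a uniform argument.

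The gap is fillable, and here is a clean way to do it. Since the characteristic polynomial $m(x)=x^{2}-\chi(r)x+\delta$ (with $\delta=\theta(r)^{1+t}\in\{\pm 1\}$) is irreducible over $\Q(\chi)$, the subalgebra $\Q(\chi)[\rho(r)]$ is a field, in which $m$ has exactly two roots: $\rho(r)$ and $\chi(r)I-\rho(r)$. Because $\rho(r)^{t}$ lies in this field, has the same characteristic polynomial, and is different from $\rho(r)$, we get $\rho(r)^{t}=\chi(r)I-\rho(r)$. The intertwining relation $T\rho(r)=\rho(r)^{t}T$ therefore becomes the anticommutation relation $T\rho(r)+\rho(r)T=\chi(r)T$. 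With $\rho(r)=\begin{pmatrix}0 & -\delta\\ 1 & \chi(r)\end{pmatrix}$ one now checks directly that
\begin{equation*}
\rho(s)=\begin{pmatrix}1 & \chi(r)\\ 0 & -1\end{pmatrix}
\end{equation*}
has entries in $\Q(\chi)$, satisfies $\rho(s)^{2}=I$, and satisfies the anticommutation relation; one then verifies that $\rho$ is irreducible (its character restricted to $\langle r\rangle$ is $\theta+\theta^{t}$, which has no $\pm 1$ eigenvalue, so it cannot split into linear characters) and hence affords $\chi$. This single uniform computation handles all semidihedral $2$-groups at once and closes the gap.
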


\begin{proof}
	If $P$ is cyclic this is a consequence of Wedderburn's Theorem. A proof for the case where $P$ is dihedral or semidihedral is given in \cite[(11.7)]{Feit_1967}.
\end{proof}

We now describe a part of the representation theory of the generalized quaternion group $Q_{2^{n}}$. Recall that $Q_{2^{n}}$ is the group of order $2^{n}$, $n\geq 3$, defined by the presentation
\begin{equation*}
Q_{2^{n}}=\gp{a,b|a^{2^{n-1}}=1,b^{2}=a^{2^{n-2}},a^{b}=a^{-1}}.
\end{equation*}
The center $Z(Q_{2^{n}})$ of $Q_{2^{n}}$ is generated by $b^{2}$ and is the unique minimal nontrivial subgroup of $Q_{2^{n}}$. Note that the quotient $Q_{2^{n}}/Z(Q_{2^{n}})$ is dihedral of order $2^{n-1}$. Also note that when $n\geq 4$ the subgroup of $Q_{2^{n}}$ generated by $a^{2}$ and $b$ is isomorphic to $Q_{2^{n-1}}$. In fact, there are exactly 2 subgroups of $Q_{2^{n}}$ that are isomorphic to $Q_{2^{n-1}}$.

\begin{lemma}\label{lem:repthyquatgp}
	Fix an integer $n\geq 3$ and let $Q_{2^{n}}$ be defined as above.
	\begin{itemize}
		\item[(a)] The set of faithful $\chi\in\Irr(Q_{2^{n}})$ is a (nonempty) Galois conjugacy class over $\Q$. Define
		\begin{equation*}
		\gamma_{n}=\sum_{\substack{\chi\in\Irr(Q_{2^{n}})\\\text{faithful}}}\chi.
		\end{equation*}
		\item[(b)] Let $\chi\in\Irr(Q_{2^{n}})$. Then $\chi$ is faithful if and only if $m_{\Q}(\chi)=2$.
		\item[(c)] The quotient $\overline{R}_{\Q}(Q_{2^{n}})/R_{\Q}(Q_{2^{n}})$ is an $\F_{2}$-space of dimension 1, with unique nonzero element (the coset containing) $\gamma_{n}$.
		\item[(d)] Let $\lambda:Z(Q_{2^{n}})\to\Q^{\times}$ be the representation defined by $\lambda(b^{2})=-1$ and let $\Q_{\lambda}$ be a $\Q Z(Q_{2^{n}})$-module affording $\lambda$. Set $\Phi_{Q_{2^{n}}}=\Ind_{Z(Q_{2^{n}})}^{Q_{2^{n}}}(\Q_{\lambda})$. Then $\Phi_{Q_{2^{n}}}$ is a faithful irreducible $\Q Q_{2^{n}}$-module with character $2\gamma_{n}$.
		\item[(e)] If $n\geq 4$ then we have
		\begin{equation*}
		\Ind_{Q_{2^{n-1}}}^{Q_{2^{n}}}(\gamma_{n-1})=\gamma_{n}\qquad\text{and}\qquad\Res_{Q_{2^{n-1}}}^{Q_{2^{n}}}(\gamma_{n})=2\gamma_{n-1},
		\end{equation*}
		where $Q_{2^{n-1}}$ is understood to be either of the 2 subgroups of $Q_{2^{n}}$ isomorphic to $Q_{2^{n-1}}$.
	\end{itemize}
\end{lemma}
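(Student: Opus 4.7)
The plan is to work through (a)--(e) in order, writing $G = Q_{2^n}$ and $Z = Z(G) = \gp{b^2}$, and applying Clifford theory to the normal cyclic subgroup $\gp{a}$ of index $2$.

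For (a), the commutator subgroup is $\gp{a^2}$, so the four linear characters are all rational and factor through the Klein four-quotient. The complex characters $\theta_j$ of $\gp{a}$, with $\theta_j(a) = \zeta_{2^{n-1}}^j$, are permuted by $b$-conjugation via $\theta_j \mapsto \theta_{-j}$, so Clifford theory exhibits the two-dimensional irreducibles as $\chi_j := \Ind_{\gp{a}}^{G} \theta_j$ for $1 \leq j \leq 2^{n-2}-1$. The induced character formula shows $\chi_j$ vanishes off $\gp{a}$ and has $\ker \chi_j = \ker \theta_j$, so $\chi_j$ is faithful iff $j$ is a unit modulo $2^{n-1}$, i.e.\ iff $j$ is odd. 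Since $\Gal(\Q(\zeta_{2^{n-1}})/\Q) = (\Z/2^{n-1}\Z)^{\times}$ acts transitively on the odd residues modulo $\pm 1$, the faithful $\chi_j$ form a single nonempty Galois conjugacy class over $\Q$, proving (a).

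For (b), Lemma \ref{lem:roquette} reduces us to deciding whether $m_\Q(\chi) = 1$ or $2$. A non-faithful irreducible $\chi$ satisfies $Z \leq \ker \chi$ (because $Z$ is the unique minimal normal subgroup of $G$), hence factors through $G/Z \iso D_{2^{n-1}}$ and has $m_\Q(\chi) = 1$ by Lemma \ref{lem:schurindicescyclicdihedral}. A faithful $\chi$ is real-valued (from $\chi(a^k) = 2\cos(2\pi jk/2^{n-1})$), so $\Q(\chi) \subseteq \R$; if $m_\Q(\chi)$ were $1$, then $\chi$ would be afforded by a real $2$-dimensional module, giving a faithful embedding $G \into \GL_2(\R)$. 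Every finite subgroup of $\GL_2(\R)$ is (conjugate into $O(2)$ and hence) cyclic or dihedral, whereas $G$ is nonabelian with a unique involution, so this is impossible and $m_\Q(\chi) = 2$. Part (c) then follows immediately from (\ref{eqn:RbarmodR}), since all Galois-class factors except the one for the faithful class vanish, and the nonzero coset is represented by $\overline{\chi} = \gamma_n$.

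For (d), I compute the complex character of $\Phi_G \tensor_\Q \C = \Ind_Z^G \C_\lambda$ via Frobenius reciprocity: $(\Res_Z \chi_j)(b^2) = 2(-1)^j$, so $\langle \Res_Z \chi_j, \lambda\rangle_Z$ equals $2$ when $j$ is odd and $0$ when $j$ is even, while $\lambda$ is orthogonal to the trivial restriction of every linear character of $G$. Hence the induced character is $\sum_{j \text{ odd}} 2\chi_j = 2\gamma_n$; the dimension check $[G:Z] = 2^{n-1} = 2\gamma_n(1)$ is consistent, and by (c) the unique irreducible $\Q G$-module with this character is the faithful one, so $\Phi_G$ must equal it. For (e), fix a subgroup $H$ of $G$ isomorphic to $Q_{2^{n-1}}$; since $Z(H) = Z$ (both equal $\gp{b^2}$ on the nose), transitivity of induction gives $\Ind_H^G \Phi_H = \Ind_H^G \Ind_Z^H \Q_\lambda = \Ind_Z^G \Q_\lambda = \Phi_G$, whence $\Ind_H^G \gamma_{n-1} = \gamma_n$. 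Because $H$ is normal of index $2$ in $G$, the Mackey decomposition yields $\Res_H^G \Phi_G = \Phi_H \dsum {}^a\Phi_H$ for any $a \in G \setminus H$; since $a$ centralizes $Z$ and therefore fixes $\lambda$, ${}^a\Phi_H \iso \Phi_H$ and $\Res_H^G \gamma_n = 2\gamma_{n-1}$. The two subgroups of $G$ isomorphic to $Q_{2^{n-1}}$ are interchanged by the automorphism of $G$ sending $a \mapsto a$, $b \mapsto ab$, which fixes $Z$ pointwise (and hence fixes $\lambda$), so the conclusion is independent of the choice of $H$. The main technical hurdle is (b); parts (c), (d), (e) are essentially formal consequences of (a), (b), and the description of $\Phi_G$ as an induced module.
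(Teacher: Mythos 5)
Your proof is correct. Parts (a), (c), (d), and (e) follow essentially the same route as the paper's: inducing the faithful characters from the cyclic subgroup $\gp{a}$ for (a), reading (c) off the isomorphism (\ref{eqn:RbarmodR}), a Frobenius reciprocity computation for (d), and transitivity of induction through $Z$ together with Mackey's formula for (e); you simply fill in more of the details, including a short automorphism argument justifying that (e) is independent of the choice of the copy of $Q_{2^{n-1}}$, a point the paper leaves implicit. The one genuine divergence is in part (b): for a faithful $\chi$, the paper cites \cite[(11.8)]{Feit_1967} to obtain $m_{\Q}(\chi)=2$, whereas you argue directly that $\chi$ is real-valued, so $m_{\Q}(\chi)=1$ would yield a faithful embedding $Q_{2^{n}}\hookrightarrow\GL_{2}(\R)$, which is impossible because every finite subgroup of $\GL_{2}(\R)$ is cyclic or dihedral while $Q_{2^{n}}$ is nonabelian with a unique involution; Lemma \ref{lem:roquette} then forces $m_{\Q}(\chi)=2$. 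This is a clean, self-contained replacement for the external citation (in effect the Frobenius--Schur indicator criterion in geometric form), and makes the lemma independent of \cite{Feit_1967} for the faithful case at the cost of only a few extra lines.
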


\begin{proof}
	The faithful irreducible complex characters of the subgroup $\gp{a}$ induce to faithful irreducible characters of $Q_{2^{n}}$, and any faithful irreducible character of $Q_{2^{n}}$ arises this way. Since Galois conjugation commutes with induction, (a) holds. Let $\chi\in\Irr(Q_{2^{n}})$ and suppose that $\chi$ is not faithful. Then $\chi$ is an irreducible character of the dihedral group $Q_{2^{n}}/Z(Q_{2^{n}})$ so $m_{\Q}(\chi)=1$ by Lemma \ref{lem:schurindicescyclicdihedral}. If $\chi$ is faithful then $m_{\Q}(\chi)=2$: this is shown in \cite[(11.8)]{Feit_1967}. Thus (b) holds and (c) follows. 
	
	From (a) and (b) we know that $2\gamma_{n}$ is the character of a faithful irreducible $\Q Q_{2^{n}}$-module. Therefore to prove (d) it suffices to prove that the character $\Ind_{Z(Q_{2^{n}})}^{Q_{2^{n}}}(\lambda)$ is equal to $2\gamma_{n}$. This follows from a short argument involving Frobenius reciprocity and the fact that the faithful irreducible complex characters of $Q_{2^{n}}$ each have degree 2.
	
	It remains to prove (e). Assume that $n\geq 4$ and denote by $Q_{2^{n-1}}$ a subgroup of $Q_{2^{n}}$ isomorphic to $Q_{2^{n-1}}$. Set $Z=Z(Q_{2^{n}})$. Then $Z=Z(Q_{2^{n-1}})$. We have
	\begin{equation*}
	2\gamma_{n}=\Ind_{Z}^{Q_{2^{n}}}(\lambda)=\Ind_{Q_{2^{n-1}}}^{Q_{2^{n}}}(2\gamma_{n-1})=2\Ind_{Q_{2^{n-1}}}^{Q_{2^{n}}}(\gamma_{n-1}),
	\end{equation*}
	hence $\Ind_{Q_{2^{n-1}}}^{Q_{2^{n}}}(\gamma_{n-1})=\gamma_{n}$. By Mackey's formula we have
	\begin{equation*}
	\Res_{Q_{2^{n-1}}}^{Q_{2^{n}}}(2\gamma_{n})=\Res_{Q_{2^{n-1}}}^{Q_{2^{n}}}(\Ind_{Z}^{Q_{2^{n}}}(\lambda))=2\Ind_{Z}^{Q_{2^{n-1}}}(\lambda)=4\gamma_{n-1},
	\end{equation*}
	so $\Res_{Q_{2^{n-1}}}^{Q_{2^{n}}}(\gamma_{n})=2\gamma_{n-1}$. The proof is complete.
\end{proof}

We remark that $\Phi_{Q_{2^{n}}}=\Ind_{Z(Q_{2^{n}})}^{Q_{2^{n}}}(\Q_{\lambda})$ is the unique faithful irreducible $\Q Q_{2^{n}}$-module up to isomorphism.

Now let $p\in\N$ be prime. A root of unity is a $p'$\textit{-root of unity} if it has order relatively prime to $p$. The set of $p'$-roots of unity in $\C$ forms a subgroup of $\C^{\times}$ and is denoted $\mu_{p'}$.

\begin{lemma}\label{lem:Brauerlem}
	(\cite[Lemma~$1^{\ast}$]{Brauer_1941}) Let $G$ be a finite group and let $K=\Q(\mu_{p'})$, where $p\in\N$ is prime. Then $m_{K}(\chi)=1$ for all $\chi\in\Irr(G)$. In particular, $R_{K}(G)=\overline{R}_{K}(G)$.
\end{lemma}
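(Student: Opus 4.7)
This is a classical result of Brauer; my strategy would be to reduce to elementary subgroups via Brauer's induction theorem and then analyze Schur indices directly, with Roquette's theorem (Lemma \ref{lem:roquette}) providing the key input.

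For the reduction, I would use a Galois-equivariant refinement of Brauer's induction theorem to write any element of $\overline{R}_{K}(G)$ as a $\Z$-linear combination of characters of the form $\Ind_{H}^{G}(\theta)$ with $H$ an elementary subgroup and $\theta\in\overline{R}_{K}(H)$. Since induction carries $R_{K}$ into $R_{K}$, this reduces the claim to $R_{K}(E)=\overline{R}_{K}(E)$ for each elementary subgroup $E$, equivalently to $m_{K}(\chi)=1$ for every $\chi\in\Irr(E)$. Writing $E=Q\times C$ with $Q$ a $q$-group and $C$ cyclic of $q'$-order, every $\chi\in\Irr(E)$ splits as $\chi=\alpha\otimes\beta$ with $\alpha\in\Irr(Q)$ and $\beta\in\Irr(C)$ linear; and $m_{K}(\beta)=1$ is automatic, since the $K$-vector space $K(\beta)$ with $C$ acting through $\beta$ affords the Galois class sum of $\beta$ over $K$.

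For the factor $\alpha$ there are three cases. If $q\neq p$ then $\mu_{q^{\infty}}\subseteq\mu_{p'}\subseteq K$, so $K$ contains all $q$-power roots of unity and is therefore a splitting field for the $q$-group $Q$, giving $m_{K}(\alpha)=1$. If $q=p$ with $p$ odd, Roquette's theorem gives $m_{\Q}(\alpha)=1$, from which $m_{K}(\alpha)=1$ follows by extending scalars from a $\Q(\alpha)Q$-module affording $\alpha$ to $K$ and decomposing the result. The main obstacle is the remaining case $p=q=2$: here $K=\Q(\mu_{2'})$ does not contain a primitive fourth root of unity and Roquette only bounds $m_{\Q}(\alpha)\in\{1,2\}$; the obstructing characters with $m_{\Q}(\alpha)=2$ are those inflated from faithful characters of generalized quaternion quotients of $Q$ (cf.\ Lemma \ref{lem:repthyquatgp}). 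To treat them I would pass to local Schur indices: since $K$ is totally imaginary the archimedean contribution is trivial, and at every non-archimedean place $v$ of $K$ the completion $K_{v}$ does contain a primitive fourth root of unity---above the prime $2$ because $K_{v}$ equals the maximal unramified extension $\Q_{2}^{\mathrm{unr}}$, and above an odd prime $\ell$ because the residue field of $K_{v}$ is $\overline{\F_{\ell}}$, so that $\mu_{4}$ Hensel-lifts from it. The associated quaternion algebra therefore splits at every place of $K$, hence globally by Albert--Brauer--Hasse--Noether, and $m_{K}(\alpha)=1$.
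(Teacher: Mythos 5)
The paper offers no proof of this lemma; it cites Brauer's 1941 paper directly, so there is no in-text argument to compare with. Your overall strategy---reduce to elementary subgroups, split off the cyclic factor, handle the $q$-group factor by cases on $q$, and attack the hard case $p=q=2$ via local Schur indices and Albert--Brauer--Hasse--Noether---is a sensible route to this classical fact. But the local analysis at the prime $2$ contains a genuine error.

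You assert that the completion of $K=\Q(\mu_{2'})$ at a place above $2$ is $\Q_2^{\mathrm{unr}}$ and that this field contains a primitive fourth root of unity. That is false: $\Q_2(\zeta_4)/\Q_2$ is a \emph{ramified} quadratic extension (indeed $1-\zeta_4$ is a uniformizer, since $N_{\Q_2(\zeta_4)/\Q_2}(1-\zeta_4)=2$), so $\zeta_4\notin\Q_2^{\mathrm{unr}}$; and one cannot Hensel-lift $\zeta_4$ from the residue field $\overline{\F}_2$ either, because $x^2+1\equiv(x+1)^2\pmod{2}$ is inseparable. The conclusion you want---that the local index at every place over $2$ is $1$---is nevertheless true, but for a different reason: the Brauer group of $\Q_2^{\mathrm{unr}}$ vanishes. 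Concretely, a central division algebra of index $n$ over a $p$-adic field is split by every degree-$n$ extension, so the quaternion division algebra over $\Q_2$ already splits over the \emph{unramified} quadratic extension $\Q_2(\zeta_3)\subseteq\Q_2^{\mathrm{unr}}$; no appeal to $\mu_4$ is needed or available. You should replace the false claim with this argument. A secondary, more minor issue is that Albert--Brauer--Hasse--Noether is a theorem about number fields, whereas $K=\Q(\mu_{p'})$ is an infinite algebraic extension of $\Q$; to apply it one should first descend to a finite subextension $F\subseteq K$ containing $\Q(\chi)$ and use that $m_{K}(\chi)\mid m_{F}(\chi)$.
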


\section{Biset Functors}

In this section we provide a review of some of the basic notions from the theories of \textit{biset functors}, which was introduced by Bouc in \cite{Bouc_1996}, and of \textit{fibered biset functors}, developed by Boltje and Co\c{s}kun in \cite{Boltje_2018}.

\subsection{Bisets, Burnside Groups, and Biset Categories}\label{subsec:bisets,burnsidegrps}

Let $G$ and $H$ be finite groups. The categories of finite left and right $G$-sets are denoted $\lset{G}$ and $\rset{G}$, respectively, and the category of finite $(G,H)$-bisets is denoted $\biset{G}{H}$. The \textit{Burnside groups} $B(G)$ and $B(G,H)$ are defined to be the Grothendieck groups, with respect to disjoint unions, of $\lset{G}$ and $\biset{G}{H}$. Any right $H$-set $X$ may be viewed as a left $H$-set via the rule $h\cdot x=xh^{-1}$ for all $h\in H$ and $x\in X$, and this convention induces category isomorphisms $\rset{H}\iso\lset{H}$ and $\biset{G}{H}\iso\lset{G\times H}$. In particular, $B(G,H)\iso B(G\times H)$.

Let $A$ be an abelian (potentially infinite) group. An $A$\textit{-fibered} $G$\textit{-set} is a (left) $G$-set equipped with a free, two-sided action of $A$ that has finitely many $A$-orbits and commutes with the action of $G$. Any $A$-fibered $G$-set may be viewed as an $A\times G$-set in a natural way. The category of $A$-fibered $G$-sets is denoted $\lset{G}^{A}$. One also has the notion of $A$\textit{-fibered} $(G,H)$\textit{-bisets} and the category $\biset{G}{H}^{A}$. Moreover, there is a category isomorphism $\biset{G}{H}^{A}\iso\lset{G\times H}^{A}$.

The \textit{(}$A$\textit{-fibered) Burnside group} $B^{A}(G)$ of $A$-fibered $G$-sets, which was introduced by Dress in \cite{Dress_1971}, is defined as the Grothendieck group, with respect to disjoint unions, of the category $\lset{G}^{A}$. Likewise the Burnside group $B^{A}(G,H)$ is the Grothendieck group of the category $\biset{G}{H}^{A}$. Note that $B(G)\iso B^{\set{1}}(G)$ and that $B^{A}(G,H)\iso B^{A}(G\times H)$ --- we identify the groups in these pairs. The construction $X\mapsto A\times X$ produces a split injective homomorphism $B(G)\to B^{A}(G)$ with left inverse $B^{A}(G)\to B(G)$ induced by $Y\mapsto Y/A$.

The isomorphism classes of transitive $A$-fibered $G$-sets form a $\Z$-basis of $B^{A}(G)$ which we call the \textit{standard basis}. There is a bijection between the set of isomorphism classes $[X]$ of transitive $A$-fibered $G$-sets and the set of $G$-conjugacy classes of pairs $(U,\phi)$ where $U\subgp G$ and $\phi:U\to A$ is a group homomorphism: to such a class $[X]$ one associates the $G$-conjugacy class of the \textit{stabilizing pair} $(U_{x},\phi_{x})$ where $x\in X$, $U_{x}$ is the stabilizer in $G$ of the $A$-orbit of $x$, and $\phi_{x}:U_{x}\to A$ is the (well-defined) group homomorphism given by the equation $gx=\phi_{x}(g)x$ for each $g\in U_{x}$. If $X$ is a transitive $A$-fibered $G$-set whose isomorphism class $[X]$ corresponds to the conjugacy class of the pair $(U,\phi)$ then we write
\begin{equation*}
[X]=\left[\frac{G}{U,\phi}\right].
\end{equation*} 
Similar statements hold for $A$-fibered $(G,H)$-bisets, and in particular for $A$-fibered $G\times H$-sets. 

Let $G$, $H$, and $K$ be groups. If $X$ is a $(G,H)$-biset and $Y$ is an $(H,K)$-biset then the Cartesian product $X\times Y$ is a left $H$-set via $h\cdot(x,y)=(xh^{-1},hy)$ for all $h\in H$, $x\in X$, and $y\in Y$. The set of $H$-orbits is a $(G,K)$-biset denoted $X\times_{H}Y$. The operation $\times_{H}$ is associative up to a canonical isomorphism and extends uniquely to a bilinear map $\times_{H}:B(G,H)\times B(H,K)\to B(G,K)$. If in addition $X$ and $Y$ are $A$-fibered then in a similar manner one may construct an $A$-fibered $(G,K)$-biset denoted $X\tensor_{AH}Y$.\footnote{Some extra care is needed here to ensure that the resulting biset has a free $A$-action. The precise definition is given in \cite[Section~2.1]{Boltje_2018}.} Again the operation $\tensor_{AH}$ is associative up to a canonical isomorphism and extends uniquely to a bilinear map $\tensor_{AH}:B^{A}(G,H)\times B^{A}(H,K)\to B^{A}(G,K)$.

Bouc defines the \textit{biset category} $\CC$ as follows (see \cite[Section~3.1]{Bouc_2010}): the objects of $\CC$ are all finite groups. If $G$ and $H$ are finite groups then we set $\Hom_{\CC}(H,G)=B(G,H)$. If also $K$ is a finite group then the composition $v\circ u$ of $v\in\Hom_{\CC}(H,G)$ and $u\in\Hom_{\CC}(K,H)$ is $v\times_{H}u$. The isomorphism class of the $(G,G)$-biset $G$ --- a biset under left and right multiplication --- is the identity of $G$. We note that the biset category $\CC$ is preadditive.

In \cite{Boltje_2018}, Boltje and Co\c{s}kun define the $A$\textit{-fibered biset category} $\CC^{A}$ to be the preadditive category whose objects are all finite groups and whose morphism sets $\Hom_{\CC^{A}}(H,G)=B^{A}(G,H)$ are the $A$-fibered Burnside groups. If $G$, $H$, and $K$ are finite groups then the composition $v\circ u$ of $v\in\Hom_{\CC^{A}}(H,G)$ and $u\in\Hom_{\CC^{A}}(K,H)$ is $v\tensor_{AH}u$. The identity of $G$ is the isomorphism class of the $A$-fibered $(G,G)$-biset $A\times G$.

The injective group homomorphisms $B(G,H)\to B^{A}(G,H)$ described above induce a faithful additive functor $\CC\to\CC^{A}$ that is the identity on objects. We therefore view $\CC$ as a subcategory of $\CC^{A}$ without further comment.

Let $p$ be a prime number. The $p$\textit{-biset category} $\CC_{p}$ is the full subcategory of the biset category $\CC$ whose objects are the finite $p$-groups. Similarly, one has the $A$\textit{-fibered} $p$\textit{-biset category} $\CC_{p}^{A}$.

We recall the definitions of some important bisets. Let $f:H\to G$ be a homomorphism of finite groups. Set
\begin{equation*}
{}_{f}\Delta(H)=\set{(f(h),h):h\in H}\qquad\text{and}\qquad\Delta_{f}(H)=\set{(h,f(h)):h\in H}.
\end{equation*}
If $H$ is a subgroup of $G$ and $f$ is the inclusion map we write $\Delta(H)$ for ${}_{f}\Delta(H)$ or $\Delta_{f}(H)$. If $f$ is an isomorphism set
\begin{equation*}
\Iso(f)=\left[\frac{G\times H}{{}_{f}\Delta(H),1}\right].
\end{equation*}
Notice that $\Iso(f)$ is an isomorphism between $H$ and $G$ and that $\Iso(\id_{G})$ is the identity of $G$ in $\CC^{A}$. If $H\subgp G$ set
\begin{equation*}
\Ind_{H}^{G}=\left[\frac{G\times H}{\Delta(H),1}\right]\qquad\text{and}\qquad\Res_{H}^{G}=\left[\frac{H\times G}{\Delta(H),1}\right].
\end{equation*}
These morphisms are called \textit{induction} and \textit{restriction}, respectively. Now let $N\nor G$ and let $\pi:G\to G/N$ denote the canonical projection. The morphisms
\begin{equation*}
\Inf_{G/N}^{G}=\left[\frac{G\times G/N}{\Delta_{\pi}(G),1}\right]\qquad\text{and}\qquad\Def_{G/N}^{G}=\left[\frac{G/N\times G}{{}_{\pi}\Delta(G),1}\right]
\end{equation*}
are called \textit{inflation} and \textit{deflation}, respectively. If $(H,N)$ is a \textit{section} of $G$ --- that is, if $N\nor H\subgp G$ --- then set
\begin{equation*}
\Indinf_{H/N}^{G}=\Ind_{H}^{G}\circ\Inf_{H/N}^{H}\qquad\text{and}\qquad\Defres_{H/N}^{G}=\Def_{H/N}^{H}\circ\Res_{H}^{G}.
\end{equation*}
Finally, let $\phi:G\to A$ be a group homomorphism. Precomposition with the canonical group isomorphism $\Delta(G)\isoto G$ produces a group homomorphism $\Delta(G)\to A$ which we denote, abusively, by $\phi$. Set
\begin{equation*}
\Mult(\phi)=\left[\frac{G\times G}{\Delta(G),\phi}\right].
\end{equation*}
Note that $\Mult(\phi)$, which we call \textit{multiplication by }$\phi$, is an endomorphism of $G$ in $\CC^{A}$ (but is not an endomorphism in $\CC$, in general). We remark that multiplication by $\phi$ is called a ``twist with $\phi$'' by Boltje and Y{\i}lmaz in \cite[Section~3.7(d)]{Boltje_2019}.

\subsection{Biset Functors}

We continue with the notation of the previous subsection: $G$ and $H$ stand for finite groups and $A$ is a fixed abelian group. Let $\DD$ be a preadditive subcategory of the $A$-fibered biset category $\CC^{A}$. An $A$\textit{-fibered biset functor defined on }$\DD$ is an additive functor from $\DD$ to $\lMod{\Z}$, the category of left $\Z$-modules. If $\DD\subseteq\CC$ then an $A$-fibered biset functor defined on $\DD$ is simply called a \textit{biset functor}, and if $\DD=\CC_{p}$ for some prime $p$ --- that is, if $\DD$ is the full subcategory of $\CC$ whose objects are the finite $p$-groups --- then a biset functor defined on $\DD$ is called a $p$\textit{-biset functor}.

When $\DD$ is essentially small the $A$-fibered biset functors defined on $\DD$ are the objects of a category $\FF_{\DD}^{A}$ whose morphisms are natural transformations. Since $\lMod{\Z}$ is abelian, so is $\FF_{\DD}^{A}$: kernels and cokernels are constructed ``pointwise'' at each group $G\in\DD$. Moreover, a morphism $\eta$ is monic/epic if and only if $\eta_{G}$ is injective/surjective for all $G\in\DD$. For simplicity we write $\FF^{A}$ in place of $\FF_{\CC^{A}}^{A}$ and we write $\FF$ in place of $\FF_{\CC}^{A}$. In addition, if $\DD=\CC_{p}$ for some prime $p$ then $\FF_{p}$ denotes the category of $p$-biset functors. 

When $F\in\FF_{\DD}^{A}$, $\theta\in F(H)$ for some $H\in\DD$, and $u\in\Hom_{\DD}(H,G)$ we sometimes write $u(\theta)$ instead of $F(u)(\theta)$. Thus we may write $\Ind_{H}^{G}(\theta)$ rather than $F(\Ind_{H}^{G})(\theta)$, for example.

A homomorphism $A'\to A$ of abelian groups induces homomorphisms $B^{A'}(G,H)\to B^{A}(G,H)$ for all finite groups $G$ and $H$. These maps in turn induce an additive functor $\CC^{A'}\to\CC^{A}$, and restriction along this functor yields an additive functor $\FF^{A}\to\FF^{A'}$. In particular, if $A'$ is a subgroup of $A$ then any $A$-fibered biset functor restricts to an $A'$-fibered biset functor. The inclusion $\Tor(A)\subgp A$ induces isomorphisms $B^{\Tor(A)}(G,H)\isoto B^{A}(G,H)$, so the resulting functors $\CC^{\Tor(A)}\to\CC^{A}$ and $\FF^{A}\to\FF^{\Tor(A)}$ are category isomorphisms. In particular, no structure is lost when restricting an $A$-fibered biset functor to a $\Tor(A)$-fibered biset functor.

If $F'$ and $F$ are objects in $\FF_{\DD}^{A}$ then we say that $F'$ is a \textit{subfunctor} of $F$ and we write $F'\subseteq F$ if $F'(G)$ is a subgroup of $F(G)$ for all $G\in\DD$ and if the inclusion maps $\iota_{G}:F'(G)\to F(G)$ define a natural transformation $\iota:F'\to F$. Note that a subfunctor $F'$ of $F$ is uniquely determined by the data of a subgroup $F'(G)$ of $F(G)$, for each $G\in\DD$, such that $F(u)$ restricts to a homomorphism $F'(H)\to F'(G)$ for all $u\in\Hom_{\DD}(H,G)$. If $F'$ and $F''$ are subfunctors of $F\in\FF_{\DD}^{A}$ then $F'=F''$ if and only if $F'(G)=F''(G)$ for all groups $G\in\DD$. Given a set of subfunctors $\set{F_{j}}$ of an $A$-fibered biset functor $F$ defined on $\DD$ the \textit{intersection} $\cap F_{j}$ is the subfunctor of $F$ defined by $(\cap F_{j})(G)=\cap F_{j}(G)$ for all $G\in\DD$. One also has the notion of a subfunctor generated by elements: let $S$ be a set of objects of $\DD$ and for each $H\in S$ let $S_{H}$ be a subset of $F(H)$. The subfunctor of $F$ \textit{generated by} $\sqcup_{H\in S}S_{H}$ is the intersection of all subfunctors $F'$ of $F$ such that $F'(H)\supseteq S_{H}$ for all $H\in S$. When $\sqcup_{H\in S}S_{H}$ consists of a single element $\gamma$ then the subfunctor generated by $\sqcup_{H\in S}S_{H}$ will instead be called the subfunctor generated by $\gamma$. Note that if $F'$ is the subfunctor of $F$ generated by $\gamma\in F(H)$ then we have
\begin{equation*}
F'(G)=\Hom_{\DD}(H,G)(\gamma)=\set{u(\gamma):u\in\Hom_{\DD}(H,G)}
\end{equation*}
for all $G\in\DD$.

\subsection{Examples}\label{subsec:examples}
	
\begin{example} (The virtual character ring)
	Let $K$ be a field of characteristic 0 and let $G$ be a finite group. As in \cite[Section~11B]{Boltje_2018} the construction $R_{K}:G\mapsto R_{K}(G)$ may be extended to a $K^{\times}$-fibered biset functor defined on the full $K^{\times}$-fibered biset category $\CC^{K^{\times}}$ by setting $R_{K}\left(\left[\frac{G\times H}{U,\phi}\right]\right)$, for any finite groups $G$, $H$ and any standard basis element $\left[\frac{G\times H}{U,\phi}\right]\in B^{K^{\times}}(G,H)$, equal to the map
	\begin{align*}
	R_{K}\left(\left[\frac{G\times H}{U,\phi}\right]\right):R_{K}(H)&\to R_{K}(G)\\
	[V]&\mapsto[\Ind_{U}^{G\times H}(K_{\phi})\tensor_{KH}V]
	\end{align*}
	where $V$ is any $KH$-module. The images under $R_{K}$ of the morphisms $\Ind_{H}^{G}$, $\Res_{H}^{G}$, $\Inf_{G/N}^{G}$, etc. which were introduced in Subsection \ref{subsec:bisets,burnsidegrps} coincide with the familiar operations from representation theory. For example, if $H\subgp G$ then $R_{K}(\Ind_{H}^{G})$ is the map induced by extension of scalars $KG\tensor_{KH}\cdot$, which justifies the abuse of notation $\Ind_{H}^{G}=R_{K}(\Ind_{H}^{G})$. Also, if $\phi:G\to K^{\times}$ is a homomorphism and $\chi$ is a $K$-character of $G$ then $R_{K}(\Mult(\phi))(\chi)=\phi\chi$.
	
	Let $L$ be an extension of $K$. Since $K^{\times}\subgp L^{\times}$ we may view $R_{L}$ as a $K^{\times}$-fibered biset functor. The maps $\iota_{G}:R_{K}(G)\into R_{L}(G)$ induced by extension of scalars from $K$ to $L$ are the components of an injective (monic) natural transformation $\iota:R_{K}\into R_{L}$. We identify $R_{K}$ with its image under $\iota$ so that $R_{K}$ is a $K^{\times}$-fibered subfunctor of $R_{L}$. 
	
	Now let $L$ be an algebraically closed extension of $K$. Recall that $\overline{R}_{K}(G)$ denotes the subring of $K$-valued virtual characters in $R_{L}(G)$. The character formula in the first part of \cite[Lemma~7.1.3]{Bouc_2010} can be used to show that $\overline{R}_{K}$ is a $K^{\times}$-fibered subfunctor of $R_{L}$. Since $R_{K}(G)\subseteq\overline{R}_{K}(G)$ for all finite groups $G$ it follows that $R_{K}$ is a subfunctor of $\overline{R}_{K}$. In particular one may form the quotient $\overline{R}_{K}/R_{K}\in\FF^{K^{\times}}$, whose evaluation at a group $G$ is given by
	\begin{equation*}
	(\overline{R}_{K}/R_{K})(G)=\overline{R}_{K}(G)/R_{K}(G).
	\end{equation*}
\end{example}

\begin{example}\label{ex:trivsourcering} (The trivial source ring)
	Let $(\K,\OO,k)$ be a $p$-modular system (see Section \ref{sec:intro}) and assume that $k$ is algebraically closed. Then $\OO$ contains all $p'$-roots of unity. Fix an embedding $\Q(\mu_{p'})\into\K$, where as before $\mu_{p'}$ denotes the group of complex roots of unity having order prime to $p$. Then $\mu_{p'}\subgp\OO^{\times}$ and $\mu_{p'}$ maps bijectively onto the group of roots of unity in $k$ via the canonical projection $\OO\onto k$. We identify $\mu_{p'}$ with its image under this projection.
	
	Recall that an $\OO G$-module is a \textit{trivial source} (or $p$\textit{-permutation})\textit{ module} if it is isomorphic to a direct summand of a permutation $\OO G$-module. The Grothendieck ring of the category of trivial source $\OO G$-modules, with respect to split short exact sequences, is denoted $T_{\OO}(G)$ and is called the \textit{trivial source ring} (see \cite[Section~5.5]{Benson_1991_vol1}). Following the procedure of the previous example the construction $T_{\OO}:G\mapsto T_{\OO}(G)$ may be given the structure of a $\mu_{p'}$-fibered biset functor. 
	
	For each finite group $G$ there is a homomorphism $\kappa_{G}:T_{\OO}(G)\to R_{\K}(G)$ induced by extension of scalars from $\OO$ to $\K$. Since $\mu_{p'}\subgp\K^{\times}$ we may regard $R_{\K}$ as a $\mu_{p'}$-fibered biset functor. Then $\kappa:T_{\OO}\to R_{\K}$ is a morphism in $\FF^{\mu_{p'}}$. Now the image $\im(\kappa)$ of $\kappa$ is a subfunctor of $R_{\K}$, and by \cite[Theorem~1]{Dress_1975} the evaluation of $\im(\kappa)$ at a group $G$ is equal to
	\begin{equation*}
	\im(\kappa_{G})=\gp{[\Ind_{H}^{G}(\K_{\phi})]:H\subgp G,\phi:H\to\mu_{p'}}.
	\end{equation*}
	Observe that $\im(\kappa_{G})$ is a subgroup (in fact, a subring) of $R_{\Q(\mu_{p'})}(G)$ for any group $G$. Thus $\im(\kappa)$ is a $\mu_{p'}$-fibered subfunctor of $R_{\Q(\mu_{p'})}$. In Section \ref{sec:subfunctorsofRKmodimkappa} we determine the subfunctor structure of the quotient $R_{\Q(\mu_{p'})}/\im(\kappa)$.
\end{example}

\subsection{Induction of Fibered Biset Functors}\label{subsec:restrandind}

Fix an abelian group $A$ and an essentially small preadditive subcategory $\DD$ of $\CC^{A}$. Restriction of $A$-fibered biset functors from $\CC^{A}$ to $\DD$ may be realized as an additive and exact functor $\RES:\FF^{A}\to\FF_{\DD}^{A}$, called \textit{restriction}. The purpose of this subsection is to introduce the left adjoint $\IND:\FF_{\DD}^{A}\to\FF^{A}$, called \textit{induction}.

Let $F:\DD\to\lMod{\Z}$ be an $A$-fibered biset functor defined on $\DD$. Define the additive functor $\IND(F):\CC^{A}\to\lMod{\Z}$ as follows: for any $G\in\CC^{A}$,
\begin{equation*}
\IND(F)(G)=\left(\underset{P\in \SS}{\Dsum}\Hom_{\CC^{A}}(P,G)\tensor_{\Z}F(P)\right)/\RR_{G}
\end{equation*}
where $\SS$ is a fixed skeleton of $\DD$ and $\RR_{G}$ is the subgroup of the direct sum generated by all elements of the form
\begin{equation*}
(v\circ w)\tensor\theta-v\tensor w(\theta),
\end{equation*}
where $w\in\Hom_{\DD}(P,Q)$ for $P,Q\in\SS$, $\theta\in F(P)$, and $v\in\Hom_{\CC^{A}}(Q,G)$. If $u\in\Hom_{\CC^{A}}(H,G)$ then $\IND(F)(u)$ is the group homomorphism satisfying
\begin{equation*}
\IND(F)(u)(v\tensor\theta)=(u\circ v)\tensor\theta.
\end{equation*}
Here $v\tensor\theta$ is a typical ``simple tensor'' in $\IND(F)(H)$: $v\in\Hom_{\CC^{A}}(P,H)$, $P\in\SS$, and $\theta\in F(P)$. 

The construction $\IND:F\mapsto\IND(F)$ extends to a functor from $\FF_{\DD}^{A}$ to $\FF^{A}$: let $F$ and $F'$ be objects of $\FF_{\DD}^{A}$ and let $\eta\in\Hom_{\FF_{\DD}^{A}}(F,F')$ be a natural transformation. For each $H\in\CC^{A}$ set $\IND(\eta)_{H}:\IND(F)(H)\to\IND(F')(H)$ equal to the group homomorphism defined by
\begin{equation*}
\IND(\eta)_{H}(v\tensor\theta)=v\tensor\eta_{P}(\theta),
\end{equation*}
where $v\tensor\theta$ is a simple tensor of $\IND(F)(H)$, as above. Then $\IND(\eta):\IND(F)\to\IND(F')$ is a natural transformation (i.e., a morphism in $\FF^{A}$). With these definitions in hand, checking that $\IND$ is a functor from $\FF_{\DD}^{A}$ to $\FF^{A}$ is straightforward.

\begin{proposition}\label{prop:resandindadjunct}
	Let $A$ be an abelian group and let $\DD$ be an essentially small preadditive subcategory of $\CC^{A}$. Then induction $\IND:\FF_{\DD}^{A}\to\FF^{A}$ is left adjoint to restriction $\RES:\FF^{A}\to\FF_{\DD}^{A}$.
\end{proposition}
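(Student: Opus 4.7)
The plan is to exhibit for each $F\in\FF_{\DD}^{A}$ and $F'\in\FF^{A}$ an explicit natural bijection
\[
\Phi_{F,F'}:\Hom_{\FF^{A}}(\IND(F),F')\isoto\Hom_{\FF_{\DD}^{A}}(F,\RES(F'))
\]
together with its inverse $\Psi_{F,F'}$. Since $\IND(F)$ is constructed as a quotient of a direct sum of tensor products, the proof parallels the classical tensor--hom adjunction, and the defining relations in $\RR_{G}$ are exactly what is needed to make naturality on $\DD$ translate into naturality on $\CC^{A}$.

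Given $\eta:F\to\RES(F')$, I define $\Psi(\eta)_{H}:\IND(F)(H)\to F'(H)$ on a simple tensor $v\tensor\theta$, with $v\in\Hom_{\CC^{A}}(P,H)$, $P\in\SS$, and $\theta\in F(P)$, by
\[
\Psi(\eta)_{H}(v\tensor\theta)=F'(v)(\eta_{P}(\theta)).
\]
The essential technical point is that this expression vanishes on $\RR_{H}$: for a generator $(v\circ w)\tensor\theta-v\tensor w(\theta)$ with $w\in\Hom_{\DD}(P,Q)$, functoriality of $F'$ and naturality of $\eta$ at $w$ give
\[
F'(v\circ w)(\eta_{P}(\theta))=F'(v)(F'(w)(\eta_{P}(\theta)))=F'(v)(\eta_{Q}(F(w)(\theta))),
\]
which matches the image of $v\tensor w(\theta)$. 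Naturality of $\Psi(\eta)$ in $H$ then follows from the formula $\IND(F)(u)(v\tensor\theta)=(u\circ v)\tensor\theta$. In the other direction, given $\tilde\eta:\IND(F)\to F'$, I set $\Phi(\tilde\eta)_{P}(\theta)=\tilde\eta_{P}(\id_{P}\tensor\theta)$ for $P\in\SS$, and extend uniquely to all of $\DD$ using the equivalence $\SS\into\DD$. Naturality of $\Phi(\tilde\eta)$ on $\SS$ with respect to $w\in\Hom_{\DD}(P,Q)$ comes from the defining relation $w\tensor\theta=\id_{Q}\tensor F(w)(\theta)$ in $\IND(F)(Q)$ combined with naturality of $\tilde\eta$ along $w$ viewed as a morphism in $\CC^{A}$.

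Finally, the identities $\Psi\circ\Phi=\id$ and $\Phi\circ\Psi=\id$ reduce to $\Psi(\eta)_{P}(\id_{P}\tensor\theta)=F'(\id_{P})(\eta_{P}(\theta))=\eta_{P}(\theta)$ and $\tilde\eta_{H}(v\tensor\theta)=F'(v)\tilde\eta_{P}(\id_{P}\tensor\theta)$, the latter by naturality of $\tilde\eta$; and naturality of the bijection $\Phi_{F,F'}$ in both variables is immediate from the defining formulas. The main obstacle is the well-definedness on $\RR_{H}$ in the construction of $\Psi$: the relations in $\RR_{H}$ have been designed precisely so that every natural transformation $F\to\RES(F')$ extends to a natural transformation $\IND(F)\to F'$, and once this single compatibility is checked the adjunction falls out formally. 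I expect no further difficulty beyond careful bookkeeping with the skeleton $\SS$ when extending $\Phi(\tilde\eta)$ to objects of $\DD$ outside $\SS$.
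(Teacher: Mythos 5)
Your proposal is correct and takes essentially the same approach as the paper: the paper defines exactly the map you call $\Psi$, via $\psi(\zeta)_{H}(v\tensor\theta)=F'(v)(\zeta_{P}(\theta))$, and leaves the remaining verifications implicit. You have simply filled in the well-definedness modulo $\RR_{H}$, the inverse $\Phi$, and the naturality checks that the paper omits; the core construction is identical.
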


\begin{proof}
	Fix a skeleton $\SS$ of $\DD$, let $F\in\FF_{\DD}^{A}$, and let $F'\in\FF^{A}$. Define a natural bijection
	\begin{equation*}
	\psi=\psi_{F,F'}:\Hom_{\FF_{\DD}^{A}}(F,\RES(F'))\to\Hom_{\FF^{A}}(\IND(F),F')
	\end{equation*}
	as follows: if $\zeta:F\to\RES(F')$ is a natural transformation then let $\psi(\zeta):\IND(F)\to F'$ be the natural transformation that satisfies
	\begin{equation*}
	\psi(\zeta)_{H}(v\tensor\theta)=F'(v)(\zeta_{P}(\theta)).
	\end{equation*}
	In the above, $H\in\CC^{A}$, $P\in\SS$, $v\in\Hom_{\CC^{A}}(P,H)$, and $\theta\in F(P)$. 
\end{proof}

We remark that $\IND(F)$ can alternatively be defined as a \textit{coend} of the functor $S:\op{\DD}\times\DD\to\FF^{A}$ that maps an object $(P,Q)$ to the $A$-fibered biset functor $\Hom_{\CC^{A}}(P,\cdot)\tensor_{\Z}F(Q)$. Using \textit{ends} one may also define the right adjoint of $\RES$. We refer to \cite[Section~3.3]{Bouc_2010} for more information.

\section{Rational $p$-Biset Functors}\label{sec:rationalpbisetfunctors}

Fix a prime number $p$. Throughout this section $R_{\Q}$, $\overline{R}_{\Q}$, and $\overline{R}_{\Q}/R_{\Q}$ are regarded only as $p$-biset functors, i.e., as functors defined on $\CC_{p}$, the full subcategory of the biset category $\CC$ with objects all finite $p$-groups. We provide a review of Bouc's theory of \textit{rational }$p$\textit{-biset functors}, focusing only on the results that will be needed later (the reader should consult \cite[Chapters 9 and 10]{Bouc_2010} for a more in-depth discussion of this material). We then apply this theory to determine the subfunctor structure of the $p$-biset functor $\overline{R}_{\Q}/R_{\Q}$. Many of the results on rational representations of $p$-groups in this section go back to Roquette; see \cite{Roquette_1958}.

Let $P$ be a $p$-group. If $P$ does not contain any normal subgroup isomorphic to $C_{p}\times C_{p}$ then $P$ is said to have \textit{normal }$p$\textit{-rank }1. If $p$ is odd then the $p$-groups of normal $p$-rank 1 are the cyclic groups. The $2$-groups of normal $2$-rank 1 are the cyclic groups, generalized quaternion groups, dihedral groups of order at least $16$, and the semidihedral groups. If $P$ has normal $p$-rank 1 then there is a unique (up to isomorphism) faithful irreducible $\Q P$-module, denoted $\Phi_{P}$.

If $P$ is a $p$-group and $S\subgp P$ is such that $N_{P}(S)/S$ has normal $p$-rank 1 then set
\begin{equation*}
V(S)=\Indinf_{N_{P}(S)/S}^{P}(\Phi_{N_{P}(S)/S}).
\end{equation*}
The functor $\Indinf_{N_{P}(S)/S}^{P}:\lmod{\Q N_{P}(S)/S}\to\lmod{\Q P}$ induces an injective $\Q$-algebra homomorphism
\begin{equation*}
\End_{\Q N_{P}(S)/S}(\Phi_{N_{P}(S)/S})\into\End_{\Q P}(V(S)).
\end{equation*}
Following Bouc, we say that $S$ is a \textit{genetic subgroup} of $P$ if this map is an isomorphism of $\Q$-algebras. A group-theoretic characterization of genetic subgroups is given in \cite[Theorem~9.5.6]{Bouc_2010}.

If $S$ is a genetic subgroup of $P$ then $\End_{\Q P}(V(S))$ is a division $\Q$-algebra. It follows that $V(S)$ is an indecomposable, hence irreducible $\Q P$-module. In this way, genetic subgroups of $P$ give rise to irreducible $\Q P$-modules. It turns out (\cite[Corollary~9.4.5]{Bouc_2010}) that any irreducible $\Q P$-module $V$ arises in this fashion, up to an isomorphism: there exists a genetic subgroup $S$ of $P$ such that $V\iso V(S)$ (we remark that Lemma \ref{lem:roquette} follows from these observations). Define an equivalence relation $\sim$ on the set of genetic subgroups of $P$ by setting $S\sim T$ if and only if there is a $\Q P$-isomorphism $V(S)\iso V(T)$. Bouc calls a set of representatives for the equivalence classes of $\sim$ a \textit{genetic basis} of $P$. Of course, if $\mathcal{G}$ is a genetic basis of $P$ then $|\mathcal{G}|=|\Irr_{\Q}(P)|$. We remark that \cite[Theorem~9.6.1]{Bouc_2010} shows that the group structure of $P$ determines precisely when two genetic subgroups $S$ and $T$ of $P$ satisfy $V(S)\iso V(T)$. In particular, the relation $\sim$ can be redefined solely in terms of the group structure of $P$. Furthermore, it can be shown that if $S$ and $T$ are genetic subgroups of $P$ satisfying $S\sim T$ then the groups $N_{P}(S)/S$ and $N_{P}(T)/T$ are isomorphic.

Now let $F\in\FF_{p}$ be a $p$-biset functor and let $P$ be a $p$-group. Following Bouc (see \cite[Definition~6.3.1 and Lemma~6.3.2]{Bouc_2010}), set
\begin{equation*}
\partial F(P)=\underset{\set{1}\neq N\nor P}{\bigcap}\ker(\Def_{P/N}^{P}:F(P)\to F(P/N)),
\end{equation*}
the subgroup of \textit{faithful elements} of $F(P)$.

\begin{proposition}\label{prop:partialRbar}
	Let $P$ be a $p$-group and let $\chi_{1},\chi_{2},\ldots,\chi_{r}$ denote representatives for the Galois conjugacy classes of $\Irr(P)$ over $\Q$. Recall that the Galois class sums $\overline{\chi_{i}}$ form a $\Z$-basis of $\overline{R}_{\Q}(P)$. The subgroup $\partial\overline{R}_{\Q}(P)$ of faithful elements of $\overline{R}_{\Q}(P)$ is generated by the $\overline{\chi_{i}}$ for which $\chi_{i}$ is faithful. In particular, $\partial\overline{R}_{\Q}(P)$ is a free $\Z$-module and if $P$ has normal $p$-rank 1 then $\partial\overline{R}_{\Q}(P)$ has $\Z$-rank 1.
\end{proposition}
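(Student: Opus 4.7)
The strategy is to compute, once and for all, how each deflation acts on a Galois class sum $\overline{\chi_i}$, and then read off $\partial\overline{R}_\Q(P)$ by linear independence. First I would verify the basic deflation formula: for any $\C P$-module $V$ and any $N\nor P$,
\begin{equation*}
\Def_{P/N}^{P}(V)\iso V_N\iso V^N
\end{equation*}
as $\C[P/N]$-modules. The first isomorphism comes from unpacking the biset tensor product using the definition of $\Def_{P/N}^P$ given in Subsection \ref{subsec:bisets,burnsidegrps}, which identifies $\Def_{P/N}^P$ with $\C[P/N]$ as a $(\C[P/N],\C P)$-bimodule; the second uses that $|N|$ is invertible in $\C$. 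When $V$ is irreducible with character $\chi$, the submodule $V^N$ is $P$-stable (since $N\nor P$), so $V^N=0$ or $V$, depending precisely on whether $N\leq\ker\chi$. Hence $\Def_{P/N}^{P}(\chi)$ equals $\chi$, viewed as a character of $P/N$, if $N\leq\ker\chi$, and is $0$ otherwise. Since Galois conjugation commutes with inflation and therefore preserves kernels, the same dichotomy holds for the class sum $\overline{\chi}$, the nonzero output being the Galois class sum of $\chi$ in $\overline{R}_\Q(P/N)$.

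One direction of the claim is then immediate: $\chi$ is faithful iff $\ker\chi$ contains no nontrivial normal subgroup of $P$ iff $\Def_{P/N}^{P}(\overline{\chi})=0$ for every $1\neq N\nor P$ iff $\overline{\chi}\in\partial\overline{R}_\Q(P)$. For the converse, take an arbitrary $\xi=\sum_i a_i\overline{\chi_i}\in\partial\overline{R}_\Q(P)$, and suppose for contradiction that $a_{i_0}\neq 0$ for some non-faithful $\chi_{i_0}$. Setting $N=\ker\chi_{i_0}\neq 1$, the formula above yields
\begin{equation*}
0=\Def_{P/N}^{P}(\xi)=\sum_{i:\ N\leq\ker\chi_i} a_i\,\overline{\chi_i}\ \in\ \overline{R}_\Q(P/N).
\end{equation*}
The characters appearing on the right are inflated from pairwise distinct irreducibles of $P/N$, and since inflation commutes with the Galois action they remain in pairwise distinct Galois conjugacy classes in $\Irr(P/N)$. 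Their Galois class sums therefore form part of the $\Z$-basis of $\overline{R}_\Q(P/N)$ described in Section 2, and so are $\Z$-linearly independent; this forces $a_{i_0}=0$, a contradiction. Thus the faithful $\overline{\chi_i}$ span $\partial\overline{R}_\Q(P)$, and freeness as a $\Z$-module follows because they are part of the $\Z$-basis $\{\overline{\chi_i}\}$ of $\overline{R}_\Q(P)$.

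The final rank-$1$ claim then reduces to a counting statement: when $P$ has normal $p$-rank $1$, there is a unique faithful irreducible $\Q P$-module $\Phi_P$ up to isomorphism (as recalled just before the proposition), so via $\Phi_P\tensor_\Q\C$ there is a unique Galois conjugacy class in $\Irr(P)$ of faithful irreducible $\C P$-characters; thus exactly one index $i$ in our list contributes a faithful $\overline{\chi_i}$. The main (and essentially only) subtlety is the identification $\Def_{P/N}^{P}(V)\iso V^N$, which is standard but needs the biset-tensor unpacking to be made explicit; once this is granted, the argument is a straightforward linear-independence argument inside $\overline{R}_\Q(P/N)$.
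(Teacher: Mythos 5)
Your proof is correct and follows essentially the same route as the paper's: both hinge on the formula $\Def_{P/N}^{P}(\overline{\chi_i})=\overline{\chi_i}$ if $N\subgp\ker(\chi_i)$ and $0$ otherwise, combined with linear independence of the Galois class sums, and both reduce the rank-one claim to the uniqueness of the faithful irreducible $\Q P$-module $\Phi_P$. You additionally derive the deflation formula $\Def_{P/N}^{P}(V)\iso V^N$ from the biset tensor product, whereas the paper states the character-level formula directly; otherwise the two arguments are the same.
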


\begin{proof}
	 Notice that if $\chi,\psi\in\Irr(P)$ are Galois conjugate over $\Q$ then $\ker(\chi)=\ker(\psi)$. If $N\nor P$ recall that $\Irr(P/N)$ can be identified with the set $\set{\chi\in\Irr(P):N\subgp\ker(\chi)}$. Under this identification, a Galois conjugacy class of irreducible characters of $P/N$ is equal to a Galois conjugacy class of irreducible characters of $P$. For any Galois class sum $\overline{\chi_{i}}$ we have
	 \begin{equation*}
	 \Def_{P/N}^{P}(\overline{\chi_{i}})=\begin{cases}
	 	\overline{\chi_{i}}	&\text{if }N\subgp\ker(\chi_{i})\\
	 	0					&\text{else.}
	 \end{cases}
	 \end{equation*}
	 It follows from this formula that $\partial\overline{R}_{\Q}(P)$ is generated by the $\overline{\chi_{i}}$ for which $\chi_{i}$ is faithful. In particular, $\partial\overline{R}_{\Q}(P)$ is a free $\Z$-module.
	 
	 Now let $P$ be a $p$-group of normal $p$-rank 1. Recall that $\Phi_{P}$ is the unique faithful irreducible $\Q P$-module, up to isomorphism. The character of any irreducible constituent of $\C\tensor_{\Q}\Phi_{P}$ is faithful, so the set of faithful $\chi\in\Irr(P)$ is nonempty. If we show that this set forms a full Galois conjugacy class over $\Q$ then the proof will be complete. So let $\chi,\psi\in\Irr(P)$ be faithful. The characters $m_{\Q}(\chi)\overline{\chi}$ and $m_{\Q}(\psi)\overline{\psi}$ are afforded by faithful irreducible $\Q P$-modules, thus both are afforded by $\Phi_{P}$. But if $m_{\Q}(\chi)\overline{\chi}=m_{\Q}(\psi)\overline{\psi}$ then $\chi$ and $\psi$ must be Galois conjugate over $\Q$.
\end{proof}

\begin{theorem}\label{thm:JGinjec}
	(\cite[Theorem~10.1.1]{Bouc_2010}) Let $P$ be a $p$-group, $\mathcal{G}$ a genetic basis of $P$, and let $F$ be a $p$-biset functor. Then the homomorphism
	\begin{equation*}
	\mathfrak{I}_{\mathcal{G}}=\underset{S\in\mathcal{G}}{\Dsum}\Indinf_{N_{P}(S)/S}^{P}:\underset{S\in\mathcal{G}}{\Dsum}\partial F(N_{P}(S)/S)\to F(P)
	\end{equation*}
	is split injective.
\end{theorem}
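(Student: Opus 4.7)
The plan is to construct an explicit left inverse
\begin{equation*}
\mathfrak{J}_{\mathcal{G}}=\bigoplus_{T\in\mathcal{G}}\mathfrak{J}_{T}:F(P)\longto\bigoplus_{T\in\mathcal{G}}\partial F(N_{P}(T)/T),
\end{equation*}
where each component $\mathfrak{J}_{T}$ is obtained by precomposing $\Defres^{P}_{N_{P}(T)/T}:F(P)\to F(N_{P}(T)/T)$ with a projection or a suitable idempotent virtual endomorphism of $F(P)$ that cuts out the faithful part $\partial F(N_{P}(T)/T)$. The target condition is that the composite $\mathfrak{J}_{\mathcal{G}}\circ\mathfrak{I}_{\mathcal{G}}$ has matrix $(\mathfrak{J}_{T}\circ\Indinf^{P}_{N_{P}(S)/S})_{S,T\in\mathcal{G}}$ equal to the identity on $\bigoplus_{S}\partial F(N_{P}(S)/S)$.

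The workhorse will be the Mackey formula in the biset category for the composition $\Defres^{P}_{N_{P}(T)/T}\circ\Indinf^{P}_{N_{P}(S)/S}$, which decomposes as a $\Z$-linear combination, indexed by $(N_{P}(T),N_{P}(S))$-double cosets $g$ in $P$, of elementary morphisms built out of conjugation, further restriction to the subgroup $N_{P}(T)\cap {}^{g}N_{P}(S)$, and further deflation modulo an induced section. When evaluated on a faithful element $\theta\in\partial F(N_{P}(S)/S)$, every summand whose internal operation includes a \emph{nontrivial} deflation applied to $\theta$ vanishes by the definition of $\partial F(N_{P}(S)/S)$. Hence only those double-coset representatives $g$ survive for which the section of $N_{P}(S)/S$ that appears is all of $N_{P}(S)/S$, i.e.\ for which $N_{P}(T)\cap {}^{g}N_{P}(S)$ maps onto $N_{P}(S)/S$ after the relevant identifications.

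The crucial input is the group-theoretic characterization of genetic subgroups (Bouc's Theorem 9.5.6) and of the equivalence $\sim$ (Theorem 9.6.1): together with the defining isomorphism $\End_{\Q N_{P}(S)/S}(\Phi_{N_{P}(S)/S})\isoto \End_{\Q P}(V(S))$, these control exactly which double cosets contribute after the faithfulness cut. When $S\not\sim T$ in $\mathcal{G}$, the surviving set is empty (testing this against the universal functor $R_{\Q}$ forces vanishing on $\partial\overline{R}_{\Q}(N_{P}(S)/S)$, hence on every $\partial F(N_{P}(S)/S)$ by naturality of the Mackey decomposition). When $S=T$, the double coset of the identity produces the identity endomorphism of $\partial F(N_{P}(S)/S)$, and the remaining surviving double cosets contribute morphisms $F(N_{P}(S)/S)\to F(N_{P}(S)/S)$ which, thanks to the genetic condition, generate a subring of $\End_{\CC_{p}}(N_{P}(S)/S)$ whose action on $\partial F(N_{P}(S)/S)$ can be averaged to yield an idempotent with image exactly $\partial F(N_{P}(S)/S)$; this is what defines $\mathfrak{J}_{S}$.

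The main obstacle is precisely this last step: isolating, inside $\End_{\CC_{p}}(N_{P}(S)/S)$, a virtual endomorphism that (a) is supported on those Mackey summands which survive the faithful-element reduction, and (b) acts as the identity on the faithful part. This is where the hypothesis that $\mathcal{G}$ is a \emph{genetic} basis (not merely any set of subgroups with $\bar{N}_{P}(S)$ of normal $p$-rank $1$) is indispensable: it guarantees that the relevant Mackey sums collapse in exactly the way needed for the matrix $\mathfrak{J}_{\mathcal{G}}\circ\mathfrak{I}_{\mathcal{G}}$ to be upper-triangular with identity diagonal, and hence that $\mathfrak{I}_{\mathcal{G}}$ is split injective.
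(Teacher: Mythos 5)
The paper does not prove this statement; it cites \cite[Theorem~10.1.1]{Bouc_2010} and uses it as a black box, so there is no in-paper argument to compare against. Your outline does follow the architecture of Bouc's proof (the Mackey formula for $\Defres\circ\Indinf$, the genetic condition controlling the resulting double cosets, and a projection onto the faithful part to build a retraction), but as written there are real gaps that prevent it from being a proof.

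The central gap is the claim that faithfulness alone kills every Mackey summand ``whose internal operation includes a nontrivial deflation applied to $\theta$.'' The Mackey decomposition of $\Defres^{P}_{N_{P}(T)/T}\circ\Indinf^{P}_{N_{P}(S)/S}$ produces terms of the form $\Indinf\circ\Iso\circ\Defres^{\overline{N}_{S}}_{D/C}$ where $(D,C)$ is a section of $\overline{N}_{S}=N_{P}(S)/S$ that is in general \emph{proper}. When $D\neq\overline{N}_{S}$ the operation applied to $\theta$ begins with a proper restriction, and faithfulness of $\theta$ (the vanishing of $\Def^{\overline{N}_{S}}_{\overline{N}_{S}/M}(\theta)$ for $M\neq 1$) says nothing about $\Def^{D}_{D/C}\Res^{\overline{N}_{S}}_{D}(\theta)$. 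The genetic hypothesis is used precisely to rule out these mixed restriction-then-deflation terms; it is not, as you present it, merely a bookkeeping device applied \emph{after} a ``faithfulness cut.'' Without this the matrix $(\mathfrak{J}_{T}\circ\Indinf^{P}_{\overline{N}_{S}})_{S,T}$ is not shown to be the identity (or triangular with unit diagonal).

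Two further issues. First, your last paragraph invokes an ``averaging'' to build an idempotent with image $\partial F(\overline{N}_{S})$; this is both unnecessary and dangerous over $\Z$. Because $\overline{N}_{S}$ has normal $p$-rank $1$, it has a unique minimal normal subgroup $Z$, and the integral biset operator $\id-\Inf^{\overline{N}_{S}}_{\overline{N}_{S}/Z}\Def^{\overline{N}_{S}}_{\overline{N}_{S}/Z}$ is already an idempotent that projects $F(\overline{N}_{S})$ onto $\partial F(\overline{N}_{S})$; one \emph{post}-composes $\Defres^{P}_{\overline{N}_{T}}$ with it (you wrote ``precomposing \ldots with a \ldots virtual endomorphism of $F(P)$,'' which puts the projection on the wrong side and on the wrong group). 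Second, ``testing against the universal functor $R_{\Q}$ \ldots hence on every $\partial F$ by naturality'' is not a valid transference: showing that a morphism in $B(\overline{N}_{T},\overline{N}_{S})$ annihilates $\partial\overline{R}_{\Q}(\overline{N}_{S})$ does not show it annihilates $\partial F(\overline{N}_{S})$ for an arbitrary $p$-biset functor $F$. What one must do --- and this is the content of the references to Corollary~6.4.5, Lemma~9.5.2, Theorem~9.5.6, and Theorem~9.6.1 of \cite{Bouc_2010} --- is prove the required vanishing and identity relations directly in the double Burnside ring, as identities between bisets modulo bisets factoring through proper deflations or inflations; these then hold upon evaluation at any $p$-biset functor.
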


\begin{definition}
	(Bouc) A $p$-biset functor $F$ is \textit{rational} if for any $p$-group $P$ and any genetic basis $\mathcal{G}$ of $P$ the map $\mathfrak{I}_{\mathcal{G}}$ is an isomorphism.
\end{definition}

It is immediate that the $p$-biset functor $R_{\Q}$ is rational and in fact $R_{\Q}$ is the prototypical example of a rational $p$-biset functor. As Barker observed in \cite[Example~3.A]{Barker_2008} the same is true if $\Q$ is replaced by any field of characteristic 0. In particular, $R_{\C}$ is a rational $p$-biset functor. Since by \cite[Theorem~10.1.5]{Bouc_2010} the class of rational $p$-biset functors is closed under taking subfunctors and quotients it follows that $\overline{R}_{\Q}$ and $\overline{R}_{\Q}/R_{\Q}$ are both rational.\footnote{The fact that $\overline{R}_{\Q}$ is a rational $p$-biset functor also follows directly from the definitions and the well-known interpretation of Schur indices as indices of certain endomorphism algebras; see \cite[Section~12.2]{Serre_1977}.}

Recall that, by Lemma \ref{lem:roquette}, the $p$-biset functor $\overline{R}_{\Q}/R_{\Q}$ is trivial if $p\neq 2$. We turn our attention now to the nontrivial case $p=2$. By Lemma \ref{lem:schurindicescyclicdihedral} if $P$ is a cyclic, dihedral, or semidihedral group then $(\overline{R}_{\Q}/R_{\Q})(P)=\set{0}$. Since $\partial(\overline{R}_{\Q}/R_{\Q})(Q_{2^{n}})=(\overline{R}_{\Q}/R_{\Q})(Q_{2^{n}})$ for any $n\geq 3$, if $P$ is a $2$-group with genetic basis $\mathcal{G}$ then the map $\mathfrak{I}_{\mathcal{G}}$ can alternatively be defined:
\begin{equation*}
\mathfrak{I}_{\mathcal{G}}=\underset{S\in\mathcal{Q}}{\Dsum}\Indinf_{N_{P}(S)/S}^{P}:\underset{S\in\mathcal{Q}}{\Dsum} (\overline{R}_{\Q}/R_{\Q})(N_{P}(S)/S)\isoto(\overline{R}_{\Q}/R_{\Q})(P),
\end{equation*}
where 
\begin{equation*}
\mathcal{Q}=\set{S\in\mathcal{G}:N_{P}(S)/S\iso Q_{2^{n}}\text{, some }n\geq 3}.
\end{equation*} 
In this case the inverse $\mathfrak{D}_{\mathcal{G}}$ of $\mathfrak{I}_{\mathcal{G}}$ can be defined as below (see Corollary~6.4.5, Lemma~9.5.2, and Theorem~9.6.1 of \cite{Bouc_2010}):
\begin{equation*}
\mathfrak{D}_{\mathcal{G}}=\underset{S\in\mathcal{Q}}{\Dsum}\Defres_{N_{P}(S)/S}^{P}:(\overline{R}_{\Q}/R_{\Q})(P)\isoto\underset{S\in\mathcal{Q}}{\Dsum}(\overline{R}_{\Q}/R_{\Q})(N_{P}(S)/S).
\end{equation*}

In the following theorem we make use of the notation set up in Lemma \ref{lem:repthyquatgp}. In particular, recall that
\begin{equation*}
\gamma_{n}=\sum_{\substack{\chi\in\Irr(Q_{2^{n}})\\\text{faithful}}}\chi
\end{equation*}
is a representative for the unique nonzero coset in $(\overline{R}_{\Q}/R_{\Q})(Q_{2^{n}})$. For ease, we write $\gamma_{n}$ for this coset.

\begin{theorem}\label{thm:subfunctorsofRbarQ/RQ}
	For each integer $n\geq 3$ let $F_{n}$ denote the subfunctor of the $2$-biset functor $\overline{R}_{\Q}/R_{\Q}$ generated by $\gamma_{n}$. Then the subfunctors $F_{n}$ are rational $2$-biset functors and are precisely the nonzero subfunctors of $\overline{R}_{\Q}/R_{\Q}$. We have
	\begin{equation*}
	\overline{R}_{\Q}/R_{\Q}=F_{3}\supsetneq F_{4}\supsetneq F_{5}\supsetneq\cdots.
	\end{equation*}
	In particular, $\overline{R}_{\Q}/R_{\Q}$ is generated by $\gamma_{3}$.
\end{theorem}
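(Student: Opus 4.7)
The plan is to (i) establish the chain of inclusions, (ii) prove strictness (the main technical obstacle), and (iii) classify all nonzero subfunctors, obtaining $\overline{R}_{\Q}/R_{\Q} = F_3$ as a byproduct. Throughout, the one-dimensionality $(\overline{R}_{\Q}/R_{\Q})(Q_{2^m}) = \F_2 \gamma_m$ from Lemma \ref{lem:repthyquatgp}(c) will be used repeatedly, as will the rationality of $\overline{R}_{\Q}/R_{\Q}$.

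For (i), the identity $\Ind_{Q_{2^{m-1}}}^{Q_{2^m}}(\gamma_{m-1}) = \gamma_m$ from Lemma \ref{lem:repthyquatgp}(e) places $\gamma_m \in F_{m-1}(Q_{2^m})$, hence $F_m \subseteq F_{m-1}$ by the definition of generated subfunctor. Rationality of each $F_n$ then follows from \cite[Theorem~10.1.5]{Bouc_2010}: subfunctors of rational $p$-biset functors are rational.

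The hard part is the strictness $F_{n+1} \subsetneq F_n$. I would prove this by showing $F_{n+1}(Q_{2^n}) = 0$. Every morphism in $\Hom_{\CC_2}(Q_{2^{n+1}}, Q_{2^n})$ is a $\Z$-linear combination of transitive bisets, each admitting Bouc's canonical factorization $\Indinf_{B/A}^{Q_{2^n}} \circ \Iso(\phi) \circ \Defres_{D/C}^{Q_{2^{n+1}}}$, so it suffices to analyze the effect on $\gamma_{n+1}$ case by case. If $D = Q_{2^{n+1}}$, faithfulness of $\gamma_{n+1}$ kills the deflation unless $C = 1$, but then the required isomorphism $Q_{2^{n+1}} \xrightarrow{\sim} B/A$ is impossible on order grounds, since $|B/A| \leq |Q_{2^n}| < 2^{n+1}$. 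Otherwise $D$ is a proper subgroup, hence either cyclic (so $(\overline{R}_{\Q}/R_{\Q})(D) = 0$ by Lemma \ref{lem:schurindicescyclicdihedral}) or isomorphic to some $Q_{2^m}$ with $3 \leq m \leq n$. In the latter case, Mackey's formula applied to $2\gamma_{n+1} = \Ind_{Z(Q_{2^{n+1}})}^{Q_{2^{n+1}}}(\lambda)$ from Lemma \ref{lem:repthyquatgp}(d)—noting that the central subgroup $Z$ of order 2 is contained in $D$—shows $\Res_D^{Q_{2^{n+1}}}(\gamma_{n+1})$ is divisible by $2$ in $\overline{R}_{\Q}(D)$, hence vanishes in the $\F_2$-space $(\overline{R}_{\Q}/R_{\Q})(D)$. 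Therefore every generator of $F_{n+1}(Q_{2^n})$ is zero.

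For the classification, let $F \subseteq \overline{R}_{\Q}/R_{\Q}$ be a nonzero subfunctor. The set $I = \{m \geq 3 : F(Q_{2^m}) \neq 0\}$ is upward closed by the induction formula, and it is nonempty: for any nonzero $x \in F(P)$, the rational inverse $\mathfrak{D}_{\mathcal{G}} = \bigoplus_{S \in \mathcal{Q}} \Defres_{N_P(S)/S}^P$ is injective, so some component $\Defres_{N_P(S)/S}^P(x) \in F(N_P(S)/S) \cong F(Q_{2^{m(S)}})$ is nonzero. Setting $n = \min I$ yields $\gamma_n \in F(Q_{2^n})$, hence $F_n \subseteq F$. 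For the reverse inclusion, any $x \in F(P)$ decomposes via $\mathfrak{I}_{\mathcal{G}}$ as $x = \sum_{S \in \mathcal{Q}} \Indinf_{N_P(S)/S}^P(x_S)$ with $x_S = \Defres_{N_P(S)/S}^P(x) \in F(Q_{2^{m(S)}})$; by minimality of $n$, only summands with $m(S) \geq n$ survive, and each such $x_S$ is a scalar multiple of $\gamma_{m(S)}$, which lies in $F_n(Q_{2^{m(S)}})$ by iterated application of $\Ind_{Q_{2^{k-1}}}^{Q_{2^k}}(\gamma_{k-1}) = \gamma_k$ starting from $\gamma_n$. Hence $x \in F_n(P)$ and $F = F_n$. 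In particular $\overline{R}_{\Q}/R_{\Q} = F_3$, since its value at $Q_8$ is $\F_2 \gamma_3 \neq 0$.
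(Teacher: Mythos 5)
Your proof is correct and follows essentially the same route as the paper: it establishes the chain of inclusions via $\Ind(\gamma_{n-1})=\gamma_n$, proves strictness by factoring transitive bisets à la Bouc and reducing to a faithfulness-plus-order contradiction, and classifies subfunctors by locating the minimal quaternion group at which $F$ is nonzero and invoking rationality. The only cosmetic difference is that you verify $\Res_D^{Q_{2^{n+1}}}(\gamma_{n+1})\equiv 0$ by a Mackey computation starting from Lemma \ref{lem:repthyquatgp}(d), whereas the paper cites the restriction formula of Lemma \ref{lem:repthyquatgp}(e) applied to the maximal subgroups; both amount to the same calculation.
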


\begin{proof}
	Since subfunctors of rational $p$-biset functors are rational, each $F_{n}$, $n\geq 3$, is a rational $2$-biset functor. Let $n\geq 4$ and view $Q_{2^{n-1}}$ as a subgroup of $Q_{2^{n}}$. Recall from Lemma \ref{lem:repthyquatgp} that $\Ind_{Q_{2^{n-1}}}^{Q_{2^{n}}}(\gamma_{n-1})=\gamma_{n}$. For any $2$-group $P$ we have 
	\begin{align*}
	F_{n}(P)&=\Hom_{\CC_{2}}(Q_{2^{n}},P)(\gamma_{n})=\Hom_{\CC_{2}}(Q_{2^{n}},P)(\Ind_{Q_{2^{n-1}}}^{Q_{2^{n}}}(\gamma_{n-1}))\\
	&\subseteq\Hom_{\CC_{2}}(Q_{2^{n-1}},P)(\gamma_{n-1})=F_{n-1}(P).
	\end{align*}
	Thus $F_{n}$ is a subfunctor of $F_{n-1}$.
	
	Keeping $n\geq 4$ we now show that $F_{n}$ is a proper subfunctor of $F_{n-1}$. Since $F_{n-1}(Q_{2^{n-1}})\neq \set{0}$ it suffices to show that $F_{n}(Q_{2^{n-1}})=\set{0}$. Suppose that this is not the case. Then there must exist a transitive $(Q_{2^{n-1}},Q_{2^{n}})$-biset $X$ such that $[X]$ does not annihilate $\gamma_{n}$, i.e., such that $\left[(\overline{R}_{\Q}/R_{\Q})([X])\right](\gamma_{n})\neq 0$. By \cite[Lemma~2.3.26]{Bouc_2010} there exists a section $(P_{1},K_{1})$ of $Q_{2^{n-1}}$, a section $(P_{2},K_{2})$ of $Q_{2^{n}}$, and a group isomorphism $f:P_{2}/K_{2}\to P_{1}/K_{1}$ such that
	\begin{equation*}
	[X]=\Indinf_{P_{1}/K_{1}}^{Q_{2^{n-1}}}\circ\Iso(f)\circ\Defres_{P_{2}/K_{2}}^{Q_{2^{n}}}.
	\end{equation*} 
	Since $[X]$ does not annihilate $\gamma_{n}$, neither does $\Defres_{P_{2}/K_{2}}^{Q_{2^{n}}}$. Now, if $P_{2}$ is a proper subgroup of $Q_{2^{n}}$ then $\Res_{P_{2}}^{Q_{2^{n}}}(\gamma_{n})=0$ --- this follows from Lemma \ref{lem:repthyquatgp} and the fact that the non-cyclic maximal subgroups of $Q_{2^{n}}$ are isomorphic to $Q_{2^{n-1}}$. Therefore we must have $P_{2}=Q_{2^{n}}$. If $K_{2}$ is a nontrivial normal subgroup of $Q_{2^{n}}$ then we have already observed that $\Def_{Q_{2^{n}}/K_{2}}^{Q_{2^{n}}}(\gamma_{n})=0$. Thus we must also have $K_{2}=\set{1}$. In particular, $P_{2}/K_{2}\iso Q_{2^{n}}$. But $P_{2}/K_{2}\iso P_{1}/K_{1}$ and the latter group is a subquotient of $Q_{2^{n-1}}$. This contradiction establishes that $F_{n}(Q_{2^{n-1}})=\set{0}$, from which it follows that $F_{n}$ is a proper subfunctor of $F_{n-1}$.
	
	To complete the proof it is enough to show that any nonzero subfunctor $F$ of $\overline{R}_{\Q}/R_{\Q}$ is equal to $F_{n}$ for some $n\geq 3$. So let $F$ be such a subfunctor. Then there exists a $2$-group $P$ such that $F(P)\neq\set{0}$. By rationality of $F$ there must exist a genetic subgroup $S$ of $P$ such that $F(N_{P}(S)/S)\neq\set{0}$ and $N_{P}(S)/S$ is generalized quaternion. In particular, there exists an integer $n\geq 3$ such that $F(Q_{2^{n}})\neq\set{0}$. Let $n$ be the smallest such integer. We show that $F=F_{n}$. Since $\gamma_{n}\in F(Q_{2^{n}})$ it is clear that $F_{n}\subseteq F$. Now let $P$ be an arbitrary $2$-group and let $\mathcal{G}$ be a genetic basis of $P$. Then, as in the remarks preceding the statement of the theorem, we have an isomorphism
	\begin{equation*}
	\underset{S\in\mathcal{Q}}{\Dsum}\Indinf_{N_{P}(S)/S}^{P}:\underset{S\in\mathcal{Q}}{\Dsum} F(N_{P}(S)/S)\isoto F(P),
	\end{equation*}
	where $\mathcal{Q}=\set{S\in\mathcal{G}:N_{P}(S)/S\iso Q_{2^{m}}\text{, some }m\geq 3}$. Observe that if $m$ is an integer in the range $3\leq m<n$ then $F_{n}(Q_{2^{m}})=\set{0}$ by the previous paragraph and $F(Q_{2^{m}})=\set{0}$ by minimality of $n$. If $m\geq n$ then $F_{n}(Q_{2^{m}})\neq\set{0}$, hence also $F(Q_{2^{m}})\neq\set{0}$. So we see that $F_{n}(Q_{2^{m}})=F(Q_{2^{m}})$ for all integers $m\geq 3$. It follows that the image of the map above is contained in $F_{n}(P)$ --- in other words, we have $F(P)\subseteq F_{n}(P)$. Since $P$ was an arbitrary $2$-group we conclude that $F\subseteq F_{n}$, hence $F=F_{n}$ as desired.
\end{proof}

Now for each integer $n\geq 3$ the quotient $F_{n}/F_{n+1}$ is a simple $2$-biset functor. In terms of Bouc's parametrization of simple biset functors (see \cite[Section~4.3]{Bouc_2010}) we have: 

\begin{corollary}
	For each integer $n\geq 3$ let $F_{n}$ denote the subfunctor of the $2$-biset functor $\overline{R}_{\Q}/R_{\Q}$ generated by $\gamma_{n}$. Then $F_{n}/F_{n+1}\iso S_{Q_{2^{n}},\F_{2}}$ where $\F_{2}$ is equipped with the trivial action of $\Out(Q_{2^{n}})$.
\end{corollary}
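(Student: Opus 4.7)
The plan is to verify the defining properties of the simple biset functor $S_{Q_{2^{n}},\F_{2}}$ from Bouc's classification and thereby identify the quotient. Recall that $S_{H,V}$ is characterized up to isomorphism as the unique simple biset functor such that $H$ is a group of minimal order with $S_{H,V}(H)\neq\set{0}$ and $S_{H,V}(H)\iso V$ as a $\Z\Out(H)$-module. I therefore need to check four things for $F_{n}/F_{n+1}$: its value at $Q_{2^{n}}$, the $\Out(Q_{2^{n}})$-action on that value, vanishing at all $2$-groups of order less than $2^{n}$, and simplicity.

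The value at $Q_{2^{n}}$ I would read off directly from the proof of Theorem \ref{thm:subfunctorsofRbarQ/RQ}, which gives $F_{n}(Q_{2^{n}})=\F_{2}\cdot\gamma_{n}$ and $F_{n+1}(Q_{2^{n}})=\set{0}$, hence $(F_{n}/F_{n+1})(Q_{2^{n}})\iso\F_{2}$. The $\Out(Q_{2^{n}})$-action is then automatically trivial, since $\Aut(\F_{2})$ itself is trivial. For vanishing below $Q_{2^{n}}$, I would use that the class of rational $p$-biset functors is closed under subquotients, so $F_{n}/F_{n+1}$ is rational. Since $F_{n}/F_{n+1}$ also vanishes on cyclic, dihedral, and semidihedral $2$-groups (being a subquotient of $\overline{R}_{\Q}/R_{\Q}$), the same reasoning that produced the restricted decomposition $\mathfrak{I}_{\mathcal{G}}$ in the discussion preceding Theorem \ref{thm:subfunctorsofRbarQ/RQ} expresses, for any $2$-group $P$, the value $(F_{n}/F_{n+1})(P)$ as a direct sum of terms $(F_{n}/F_{n+1})(Q_{2^{m}})$ with each $Q_{2^{m}}=N_{P}(S)/S$ a subquotient of $P$. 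If $|P|<2^{n}$ then every such $m$ satisfies $m<n$, and the proof of Theorem \ref{thm:subfunctorsofRbarQ/RQ} gives $F_{n}(Q_{2^{m}})=\set{0}$ in this range, so each summand vanishes.

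Simplicity is finally immediate from Theorem \ref{thm:subfunctorsofRbarQ/RQ}: every subfunctor of $F_{n}/F_{n+1}$ lifts to a subfunctor of $\overline{R}_{\Q}/R_{\Q}$ sandwiched between $F_{n+1}$ and $F_{n}$, but the totally ordered chain $\overline{R}_{\Q}/R_{\Q}=F_{3}\supsetneq F_{4}\supsetneq\cdots$ admits no nontrivial intermediate subfunctor. Bouc's classification then identifies $F_{n}/F_{n+1}$ with $S_{Q_{2^{n}},\F_{2}}$. I expect the only mildly delicate step to be verifying that the rational decomposition restricts correctly to the subset $\mathcal{Q}$ of genetic subgroups in the second paragraph; the remaining verifications are essentially bookkeeping driven by the preceding theorem.
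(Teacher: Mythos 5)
Your proof is correct and takes essentially the same approach the paper leaves implicit: the paper simply observes that $F_{n}/F_{n+1}$ is simple (by the chain structure established in Theorem \ref{thm:subfunctorsofRbarQ/RQ}) and then cites Bouc's parametrization, while you spell out the verification of the parametrizing data — that $Q_{2^n}$ is of minimal order with nonzero evaluation, that the evaluation there is $\F_{2}$, and that the $\Out(Q_{2^n})$-action is necessarily trivial.
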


We also remark that Theorem \ref{thm:subfunctorsofRbarQ/RQ} can alternatively be obtained via Bouc's characterization of the subfunctors of rational $p$-biset functors, which is described in \cite[Section~10.2]{Bouc_2010}.

We conclude this section with a lemma that will be used in the proof of Theorem \ref{thm:subfunctorsofRK/imkappa}.

\begin{lemma}\label{lem:lemforlater}
	Let $P$, $\mathcal{G}$, and $\mathcal{Q}$ be as in the remarks preceding Theorem \ref{thm:subfunctorsofRbarQ/RQ}. For each $S\in\mathcal{Q}$ let $\gamma_{S}$ denote the unique nonzero element in $(\overline{R}_{\Q}/R_{\Q})(N_{P}(S)/S)$. The set $\set{\Indinf_{N_{P}(S)/S}^{P}(\gamma_{S}):S\in\mathcal{Q}}$ is an $\F_{2}$-basis of $(\overline{R}_{\Q}/R_{\Q})(P)$. For each integer $n\geq 3$ set
	\begin{equation*}
		\mathcal{Q}_{n}=\set{S\in\mathcal{G}:N_{P}(S)/S\iso Q_{2^{m}}\text{ for some }m\geq n}.
	\end{equation*}
	Then
	\begin{equation*}
		F_{n}(P)=\spn(\Indinf_{N_{P}(S)/S}^{P}(\gamma_{S}):S\in\mathcal{Q}_{n}).
	\end{equation*}	
\end{lemma}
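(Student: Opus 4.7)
The plan is to derive both claims from the isomorphism $\mathfrak{I}_{\mathcal{G}}$ displayed just before Theorem \ref{thm:subfunctorsofRbarQ/RQ}, which realizes $\overline{R}_{\Q}/R_{\Q}$ as a rational $2$-biset functor. For the first assertion, recall that by Lemma \ref{lem:repthyquatgp}(c) we have $(\overline{R}_{\Q}/R_{\Q})(Q_{2^{m}}) = \F_{2}\cdot\gamma_{m}$ for every $m\geq 3$. Plugging this into the alternative form
\[
\mathfrak{I}_{\mathcal{G}} = \bigoplus_{S\in\mathcal{Q}}\Indinf_{N_{P}(S)/S}^{P} \colon \bigoplus_{S\in\mathcal{Q}}(\overline{R}_{\Q}/R_{\Q})(N_{P}(S)/S) \isoto (\overline{R}_{\Q}/R_{\Q})(P),
\]
the source becomes an $\F_{2}$-vector space with basis $\{\gamma_{S}\}_{S\in\mathcal{Q}}$, so the images $\Indinf_{N_{P}(S)/S}^{P}(\gamma_{S})$ form an $\F_{2}$-basis of $(\overline{R}_{\Q}/R_{\Q})(P)$.

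For the second assertion, the key observation is that $F_{n}$, being a subfunctor of the rational $2$-biset functor $\overline{R}_{\Q}/R_{\Q}$, is itself rational by \cite[Theorem~10.1.5]{Bouc_2010}. Therefore the map $\mathfrak{I}_{\mathcal{G}}$ restricted to $F_{n}$ is again an isomorphism
\[
\bigoplus_{S\in\mathcal{G}}\Indinf_{N_{P}(S)/S}^{P} \colon \bigoplus_{S\in\mathcal{G}}\partial F_{n}(N_{P}(S)/S) \isoto F_{n}(P).
\]
I would then identify the summands that actually contribute. Since $F_{n}(N_{P}(S)/S) \subseteq (\overline{R}_{\Q}/R_{\Q})(N_{P}(S)/S)$, which vanishes when $N_{P}(S)/S$ is cyclic, dihedral, or semidihedral (by Lemma \ref{lem:schurindicescyclicdihedral}), only $S\in\mathcal{Q}$ can contribute. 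For such $S$ with $N_{P}(S)/S\iso Q_{2^{m}}$, the proof of Theorem \ref{thm:subfunctorsofRbarQ/RQ} showed that $F_{n}(Q_{2^{m}}) = \F_{2}\cdot\gamma_{m}$ when $m\geq n$ and $F_{n}(Q_{2^{m}}) = \{0\}$ when $3\leq m < n$. Moreover $\partial F_{n}(Q_{2^{m}}) = F_{n}(Q_{2^{m}})$, because even the larger group $(\overline{R}_{\Q}/R_{\Q})(Q_{2^{m}})$ consists entirely of faithful elements. So only $S\in\mathcal{Q}_{n}$ contribute, each with $\partial F_{n}(N_{P}(S)/S) = \F_{2}\cdot\gamma_{S}$, giving
\[
F_{n}(P) = \spn\bigl(\Indinf_{N_{P}(S)/S}^{P}(\gamma_{S}) : S\in\mathcal{Q}_{n}\bigr),
\]
as desired.

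There is no real obstacle here; the argument is essentially a bookkeeping exercise using the rationality of $F_{n}$ together with the explicit values of $\overline{R}_{\Q}/R_{\Q}$ and $F_{n}$ on generalized quaternion groups. The only point that warrants care is verifying that the restriction of $\mathfrak{I}_{\mathcal{G}}$ to $F_{n}$ yields the analogous isomorphism; this is immediate from the definition of a rational $p$-biset functor applied to $F_{n}$ and the fact that $\partial F_{n}(N_{P}(S)/S) = F_{n}(N_{P}(S)/S) \cap \partial(\overline{R}_{\Q}/R_{\Q})(N_{P}(S)/S)$, which collapses to $F_{n}(N_{P}(S)/S)$ in the quaternion case and to $0$ in the remaining normal $p$-rank one cases.
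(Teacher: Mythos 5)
Your proof is correct. The first assertion is proved exactly as the paper intends (it is implicit from the isomorphism $\mathfrak{I}_{\mathcal{G}}$ established in the remarks preceding Theorem \ref{thm:subfunctorsofRbarQ/RQ}), and the second is valid. The route differs mildly from the paper's: you invoke the rationality of $F_{n}$ directly (so that $\mathfrak{I}_{\mathcal{G}}$ applied to $F_n$ is again an isomorphism), then identify the surviving summands $\partial F_{n}(N_{P}(S)/S)$ using the vanishing of $(\overline{R}_{\Q}/R_{\Q})$ on non-quaternion normal $p$-rank-$1$ groups, the identity $\partial F_{n}=F_{n}\cap\partial(\overline{R}_{\Q}/R_{\Q})$, and the values $F_{n}(Q_{2^{m}})$ established in the proof of Theorem \ref{thm:subfunctorsofRbarQ/RQ}. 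The paper instead takes an arbitrary $\sum_{S\in\mathcal{Q}}a_{S}\Indinf_{N_{P}(S)/S}^{P}(\gamma_{S})\in F_{n}(P)$, applies the single component $\Defres_{N_{P}(T)/T}^{P}$ of $\mathfrak{D}_{\mathcal{G}}$ to extract $a_{T}\gamma_{T}$, and observes that this lies in $F_{n}(N_{P}(T)/T)$ by the subfunctor property, forcing $T\in\mathcal{Q}_{n}$ whenever $a_{T}\neq 0$. So the paper never re-invokes rationality of $F_n$ itself but gets the same conclusion from the explicit $\Defres$ computation and subfunctor closure; your version is a bit more conceptual (it packages the bookkeeping into the rationality isomorphism for $F_n$) whereas the paper's is a shade more elementary and self-contained. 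Both are fine; the one point you should state explicitly if you write this up is the identity $\partial F'(H)=F'(H)\cap\partial F(H)$ for a subfunctor $F'\subseteq F$, which you use but only gesture at.
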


\begin{proof}
	Let $\sum_{S\in\mathcal{Q}}a_{S}\Indinf_{N_{P}(S)/S}^{P}(\gamma_{S})$ be an arbitrary element of $F_{n}(P)$. If some coefficient $a_{T}\neq 0$ then
	\begin{equation*}
		\Defres_{N_{P}(T)/T}^{P}\left(\sum_{S\in\mathcal{Q}}a_{S}\Indinf_{N_{P}(S)/S}^{P}(\gamma_{S})\right)=a_{T}\gamma_{T}\in F_{n}(N_{P}(T)/T).
	\end{equation*} 
	Therefore $F_{n}(N_{P}(T)/T)\neq\set{0}$. Now if $N_{P}(T)/T\iso Q_{2^{m}}$ then the proof of Theorem \ref{thm:subfunctorsofRbarQ/RQ} shows that $n\leq m$. Since clearly $\Indinf_{N_{P}(S)/S}^{P}(\gamma_{S})\in F_{n}(P)$ for all $S\in\mathcal{Q}_{n}$ the equality holds.
\end{proof}

\section{The Subfunctors of $R_{K}/\im(\kappa)$}\label{sec:subfunctorsofRKmodimkappa}

Throughout this section $p$ denotes a fixed prime number and $K=\Q(\mu_{p'})$, where as always $\mu_{p'}$ denotes the group of complex roots of unity that have order relatively prime to $p$. Recall that $R_{K}(G)=\overline{R}_{K}(G)$ for any finite group $G$ --- this is Lemma \ref{lem:Brauerlem}.

Let $(\K,\OO,k)$ be a $p$-modular system such that $k$ is algebraically closed. Fix an embedding $K\into\K$. Recall the morphism of $\mu_{p'}$-fibered biset functors $\kappa:T_{\OO}\to R_{\K}$ introduced in Subsection \ref{subsec:examples}. Recall also that the image of $\kappa$ is a $\mu_{p'}$-fibered subfunctor of $R_{K}$. In this section we determine the subfunctor structure of the quotient $R_{K}/\im(\kappa)$.

We begin by recalling some well-known definitions from finite group theory. Let $q$ be a prime number and let $H=\gp{x}\semil Q$ be a semidirect product of a cyclic group $\gp{x}$ of order prime to $q$ by a $q$-group $Q$. Recall that such a group $H$ is called \textit{quasi-elementary for }$q$. If $Q$ centralizes $\gp{x}$, i.e., if $H=\gp{x}\times Q$, then $H$ is \textit{elementary for }$q$. We say that $H$ is $\Q(\mu_{p'})$\textit{-elementary for }$q$ if $Q$ centralizes the $p$-complement of $\gp{x}$ (this definition coincides with the standard --- and more general --- notion of $K$\textit{-elementary groups} given, for example, in \cite[Chapter~2, Section~21]{Curtis_1981_vol1} in the case where $K=\Q(\mu_{p'})$, which is the only case of importance in the sequel). Note that if $p$ does not divide the order of $H$, if $p=q$, or if $p=2$ then $H$ is $\Q(\mu_{p'})$-elementary for $q$ if and only if it is elementary for $q$. Finally, recall that a group $H$ is \textit{quasi-elementary} if it is quasi-elementary for some prime $q$. The classes of $\Q(\mu_{p'})$\textit{-elementary} and \textit{elementary} groups are defined analogously.

The proof of Theorem \ref{thm:alphaonto} requires a special case of the Witt-Berman Theorem, which is stated below. We remark that the Witt-Berman Theorem holds over arbitrary fields $K$ of characteristic 0, but we shall not make use of this fact.

\begin{theorem}\label{thm:wittberman}
	(Witt-Berman; \cite[Theorem~21.6]{Curtis_1981_vol1}) Let $p$ be a prime, let $K=\Q(\mu_{p'})$, and let $G$ be a finite group. If $\mathcal{H}^{p'}$ denotes the set of $\Q(\mu_{p'})$-elementary subgroups of $G$ then the homomorphism
	\begin{equation*}
		\underset{H\in\mathcal{H}^{p'}}{\Dsum}\Ind_{H}^{G}:\underset{H\in\mathcal{H}^{p'}}{\Dsum} R_{K}(H)\to R_{K}(G)
	\end{equation*}
	is surjective. 
\end{theorem}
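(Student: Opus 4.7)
The plan is to follow the classical three-step structure of Brauer-type induction theorems, adapted to detect the smaller class of $\Q(\mu_{p'})$-elementary subgroups; compare \cite[Section~21]{Curtis_1981_vol1}. Let $I$ denote the image of the map in question. First, I would show that $I$ is an ideal of $R_K(G)$: for any $H\in\mathcal{H}^{p'}$, any $\psi\in R_K(H)$, and any $\chi\in R_K(G)$, Frobenius reciprocity gives
\[
\chi\cdot\Ind_H^G(\psi)=\Ind_H^G(\Res_H^G(\chi)\cdot\psi)\in I.
\]
Consequently, it suffices to show that the trivial character $1_G$ lies in $I$, since then $\chi=\chi\cdot 1_G\in I$ for every $\chi\in R_K(G)$.

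For each prime $q$ dividing $|G|$, write $q^{a_q}$ for the $q$-part of $|G|$. The technical core of the argument is the construction of a virtual character $\theta_q\in I$ satisfying $\theta_q(g)\equiv 1\pmod{q^{a_q}}$ for every $g\in G$. For each $x\in G$, write $x=x_q x_{q'}$ for the decomposition into commuting $q$- and $q'$-parts, and set $H_x=\gp{x_{q'}}\semil Q$, where $Q$ is a Sylow $q$-subgroup of $C_G(x_{q'})$ containing $x_q$. Then $H_x$ is elementary for $q$ and in particular $\Q(\mu_{p'})$-elementary. Brauer's permutation-character argument, using the $q$-th power map together with Fermat's little theorem and an induction on $a_q$, then produces a character $\eta_x$ of $H_x$ whose induction to $G$ assembles, over conjugacy classes of the $H_x$, into the desired $\theta_q$. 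The essential refinement supplied by Witt and Berman over Brauer's original argument is that when $q\neq p$ one only needs the Sylow $q$-subgroup of the quasi-elementary factor to centralize the $p'$-part of the cyclic factor; the $p$-part can be absorbed into $Q$ without the resulting subgroup exiting $\mathcal{H}^{p'}$.

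Given the $\theta_q$, the Chinese Remainder Theorem yields integers $c_q$ with $\sum_q c_q\theta_q\equiv 1\pmod{|G|}$. The difference $1_G-\sum_q c_q\theta_q$ is then a class function whose every value is divisible by $|G|$, and by Artin's induction theorem any such class function lies in the $\Z$-span of characters induced from cyclic subgroups, all of which are elementary and hence $\Q(\mu_{p'})$-elementary. This difference therefore lies in $I$, so $1_G\in I$ as required. The main obstacle is clearly the construction of $\theta_q$ with the prescribed congruence: this is the substance of Brauer's original theorem on induced characters, and the delicate bookkeeping lies in ensuring that each quasi-elementary subgroup arising in the inductive construction remains $\Q(\mu_{p'})$-elementary. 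The ideal-theoretic reduction and the CRT passage are formal once this congruence is established.
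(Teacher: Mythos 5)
The paper does not prove this theorem at all; it is cited directly as \cite[Theorem~21.6]{Curtis_1981_vol1} and used as a black box. So there is no internal argument to compare against, and your task here is really to sketch the classical result. With that understanding, your outline captures the right shape of the Brauer/Witt--Berman argument: reduce via Frobenius reciprocity to showing $1_G\in I$, build for each prime $q\mid|G|$ an element $\theta_q\in I$ with a strong congruence property, combine, and invoke Artin for the leftover piece. You also correctly identify the genuine content of the Witt--Berman refinement, namely that because $K=\Q(\mu_{p'})$ contains all $p'$-roots of unity one only needs the Sylow $q$-subgroup of the quasi-elementary factor to centralize the $p'$-part of the cyclic part, so the subgroups that arise stay inside $\mathcal{H}^{p'}$.

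However, the final step is stated incorrectly, and this is where the real force of the theorem is being leaned on. You assert that a virtual character all of whose values are divisible by $|G|$ lies in the $\Z$-span of characters induced from cyclic subgroups ``by Artin's induction theorem.'' Artin's theorem gives $|G|\cdot R_K(G)\subseteq V_{\mathrm{cyc}}$, where $V_{\mathrm{cyc}}$ denotes that span; the sublattice $\{\phi\in R_K(G):\phi(g)\equiv 0\ (\mathrm{mod}\ |G|)\ \forall g\}$ is strictly larger than $|G|\cdot R_K(G)$ in general (the regular character already shows these are different lattices), so this containment is not what Artin's theorem says, and asserting it is not a formality. The way the classical proof actually closes is an index argument: $I$ contains $V_{\mathrm{cyc}}\supseteq|G|\cdot R_K(G)$, so $R_K(G)/I$ is a finite abelian group annihilated by $|G|$ and hence has order supported only on primes dividing $|G|$; then for each such $q$ one uses $\theta_q$ to show $R_K(G)/I$ has order prime to $q$ (this is where the congruence is used, together with the ideal property), forcing $R_K(G)/I=0$. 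Relatedly, the CRT clause ``the Chinese Remainder Theorem yields $c_q$ with $\sum c_q\theta_q\equiv 1\ (\mathrm{mod}\ |G|)$'' does not follow from $\theta_q\equiv 1\ (\mathrm{mod}\ q^{a_q})$ alone, since nothing controls $\theta_q$ modulo the other primes; the usual device is to take $\theta=1_G-\prod_q(1_G-\theta_q)$, which lies in $I$ and has $1_G-\theta$ with all values divisible by $|G|$, landing you back at the same gap. You should either replace the Artin step with the index/exponent argument, or supply a correct reference for the stronger claim about value-divisibility and $V_{\mathrm{cyc}}$.
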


Now the full subcategory of the $\mu_{p'}$-fibered biset category whose objects are the $p$-groups, $\CC_{p}^{\mu_{p'}}$, is a subcategory of the biset category $\CC$. Thus we have $\CC_{p}^{\mu_{p'}}=\CC_{p}$. Restriction from $\CC^{\mu_{p'}}$ to $\CC_{p}$ induces a functor $\RES:\FF^{\mu_{p'}}\to\FF_{p}$ with left adjoint $\IND:\FF_{p}\to\FF^{\mu_{p'}}$, as discussed in Subsection \ref{subsec:restrandind}. For any finite group $G$ we have
\begin{equation*}
\IND(\RES(R_{K}))(G)=\left(\underset{P\in \SS}{\bigoplus}B^{\mu_{p'}}(G,P)\tensor_{\Z} R_{K}(P)\right)/\RR_{G}
\end{equation*}
where $\SS$ is a fixed set of representatives for the isomorphism classes of $p$-groups and where $\RR_{G}$ is the subgroup generated by all elements of the form
\begin{equation*}
\left[\frac{G\times Q}{U,\phi}\right]\circ\left[\frac{Q\times P}{T,1}\right]\tensor\chi-\left[\frac{G\times Q}{U,\phi}\right]\tensor R_{K}\left(\left[\frac{Q\times P}{T,1}\right]\right)(\chi).
\end{equation*}
Here $P,Q\in\SS$, $\left[\frac{G\times Q}{U,\phi}\right]\in B^{\mu_{p'}}(G,Q)$ and $\left[\frac{Q\times P}{T,1}\right]\in B^{\mu_{p'}}(Q,P)$ are standard basis elements, and $\chi$ is a $K$-character of $P$. By Proposition \ref{prop:resandindadjunct} there is a bijection
\begin{equation*}
\psi:\Hom_{\FF_{p}}(\RES(R_{K}),\RES(R_{K}))\isoto\Hom_{\FF^{\mu_{p'}}}(\IND(\RES(R_{K})),R_{K}).
\end{equation*}
We set
\begin{equation*}
\alpha=\psi(\id_{\RES(R_{K})}).
\end{equation*}
At a finite group $G$ the natural transformation $\alpha$ has component
\begin{align*}
\alpha_{G}:\IND(\RES(R_{K}))(G)&\to R_{K}(G)\\
\left[\frac{G\times P}{U,\phi}\right]\tensor\chi&\mapsto R_{K}\left(\left[\frac{G\times P}{U,\phi}\right]\right)(\chi).
\end{align*}

\begin{lemma}\label{lem:imagesubring}
	Let $G$ be a finite group. Then $\im(\alpha_{G})$ is a unital subring of $R_{K}(G)$.
\end{lemma}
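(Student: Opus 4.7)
The plan is to verify two things: (i) $1_G\in\im(\alpha_G)$, and (ii) $\im(\alpha_G)$ is closed under the pointwise product in $R_K(G)$. Recall that $\im(\alpha_G)$ is generated as an abelian group by elements of the form $R_K\bigl([(G\by P)/(U,\phi)]\bigr)(\chi)$ with $P$ a finite $p$-group, $U\subgp G\by P$, $\phi:U\to\mu_{p'}$ a group homomorphism, and $\chi\in R_K(P)$.

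For (i), I take $P=\{1\}$ (a $p$-group), $U=G\by\{1\}$, $\phi$ the trivial homomorphism, and $\chi$ the trivial character of $\{1\}$. Then $\alpha_G\bigl([(G\by\{1\})/(G\by\{1\},1)]\tensor\chi\bigr)$ is the character of the $KG$-module $\Ind_{G\by\{1\}}^{G\by\{1\}}(K)\tensor_K K\iso K$ with trivial $G$-action, which is $1_G$.

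For (ii), by bilinearity it suffices to treat two generators $\chi=R_K(X_1)(\chi_1)$ and $\psi=R_K(X_2)(\chi_2)$, with $X_i=[(G\by P_i)/(U_i,\phi_i)]$, $P_i$ a $p$-group, and $\chi_i\in R_K(P_i)$. The crucial point is that $P_1\by P_2$ is again a $p$-group. I claim $\chi\cdot\psi=R_K(Y)(\chi_1\boxtimes\chi_2)$ for a suitable $\mu_{p'}$-fibered biset $Y\in B^{\mu_{p'}}(G,P_1\by P_2)$, where $\chi_1\boxtimes\chi_2\in R_K(P_1\by P_2)$ is the outer tensor product. Concretely, set $Y=[(G\by P_1\by P_2)/(U,\phi)]$ where $U=\{(g,p_1,p_2):(g,p_1)\in U_1,\,(g,p_2)\in U_2\}$ is the ``pullback'' subgroup and $\phi(g,p_1,p_2)=\phi_1(g,p_1)\phi_2(g,p_2)$; equivalently, $Y=\Res_{\Delta(G)}^{G\by G}\circ(X_1\boxtimes X_2)$ is the external product of the two fibered bisets followed by restriction along the diagonal $\Delta\colon G\into G\by G$.

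Letting $V_i$ afford $\chi_i$, the identity $R_K(Y)(\chi_1\boxtimes\chi_2)=\chi\cdot\psi$ amounts to producing a $KG$-module isomorphism
\[
\Ind_U^{G\by P_1\by P_2}(K_\phi)\tensor_{K(P_1\by P_2)}(V_1\tensor_K V_2)\iso\bigl[\Ind_{U_1}^{G\by P_1}(K_{\phi_1})\tensor_{KP_1}V_1\bigr]\tensor_K\bigl[\Ind_{U_2}^{G\by P_2}(K_{\phi_2})\tensor_{KP_2}V_2\bigr]
\]
with the diagonal $G$-action on the right, which reduces to the standard compatibility of induction with tensor products over $K$. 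The main obstacle is really just bookkeeping: defining the external product $X_1\boxtimes X_2$ in the fibered setting requires collapsing the antidiagonal $\mu_{p'}$-action on the Cartesian product $X_1\times X_2$, and checking that $R_K$ turns this external product of bisets into the outer tensor product of modules. The explicit ``pullback subgroup'' description of $Y$ bypasses naming the external product, at the cost of verifying the displayed module isomorphism directly.
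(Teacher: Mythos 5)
Your overall strategy is close to the paper's: both arguments reduce the product of two generators to the restriction along the diagonal $G \into G\times G$ of an external product living over $G\times G$. The treatment of $1_G$ is fine and essentially identical to the paper's ($\Inf_{G/G}^{G}\tensor 1_{G/G}$ is the same datum as your $P=\set{1}$ choice). However, your ``concrete'' description of $Y$ contains a genuine error, and the claimed module isomorphism it rests on is false.

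The problem is that the transitive fibered biset $\left[\frac{G\times P_{1}\times P_{2}}{U,\phi}\right]$, with $U$ the pullback subgroup you define, is \emph{not} equal to $\Res_{\Delta(G)}^{G\times G}\circ(X_{1}\boxtimes X_{2})$ in general. The external product $X_{1}\boxtimes X_{2}$ is a transitive fibered $(G\times G, P_{1}\times P_{2})$-biset with stabilizing pair $(\sigma(U_{1}\times U_{2}),(\phi_{1}\times\phi_{2})\circ\sigma^{-1})$, but restricting the left action to $\Delta(G)$ breaks transitivity: by Mackey's formula the result is a \emph{sum} of transitive bisets over $(\Delta(G)\times P_{1}\times P_{2})$-$\sigma(U_{1}\times U_{2})$ double cosets, of which your pullback biset is only the term at the trivial double coset. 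Correspondingly, the ``standard compatibility of induction with tensor products over $K$'' you invoke is not the isomorphism you display; the correct statement (the Mackey tensor identity) produces a direct sum of induced modules, not a single one. A minimal counterexample: take $G$ any nontrivial group, $P_{1}=P_{2}=\set{1}$, $U_{1}=U_{2}$ trivial, $\phi_{i}=1$, $\chi_{i}=1$. Then $\chi=\psi$ is the regular character $\rho_{G}$ and $\chi\cdot\psi = |G|\,\rho_{G}$, whereas your pullback $U$ is again the trivial subgroup of $G\times\set{1}\times\set{1}$, so $R_{K}(Y)(\chi_{1}\boxtimes\chi_{2}) = \rho_{G} \neq |G|\,\rho_{G}$.

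The fix is simply to discard the ``concrete'' transitive description and work with $Y = \Iso(\delta)\circ\Res_{\Delta(G)}^{G\times G}\circ(X_{1}\boxtimes X_{2})$ as a general (not necessarily transitive) element of $B^{\mu_{p'}}(G,P_{1}\times P_{2})$; since $\alpha_{G}$ is $\Z$-linear, non-transitivity is harmless for membership in $\im(\alpha_{G})$. That is in substance what the paper does: it forms the element $\left[\frac{(G\times G)\times(P\times Q)}{\sigma(U\times T),(\phi\times\theta)\circ\sigma^{-1}}\right]\tensor(\chi\tensor\psi)$ in $\IND(\RES(R_{K}))(G\times G)$, applies $\Iso(\delta)\Res_{\Delta(G)}^{G\times G}$ using naturality of $\alpha$, and verifies equality of characters directly via the formula of \cite[Lemma~7.1.3]{Bouc_2010} rather than attempting a module isomorphism.
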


\begin{proof}
	It is clear that $\im(\alpha_{G})$ is a subgroup of $R_{K}(G)$ containing the principal character $1_{G}=\alpha_{G}(\Inf_{G/G}^{G}\tensor 1_{G/G})$, so we only need to show that $\im(\alpha_{G})$ is closed under multiplication. In fact, we need only show that
	\begin{equation}\label{eqn:prodalpha}
		\alpha_{G}\left(\left[\frac{G\times P}{U,\phi}\right]\tensor\chi\right)\cdot\alpha_{G}\left(\left[\frac{G\times Q}{T,\theta}\right]\tensor\psi\right)
	\end{equation}
	is contained in $\im(\alpha_{G})$ for any $p$-groups $P,Q\in\SS$, standard basis elements $\left[\frac{G\times P}{U,\phi}\right]\in B^{\mu_{p'}}(G,P)$, $\left[\frac{G\times Q}{T,\theta}\right]\in B^{\mu_{p'}}(G,Q)$, and characters $\chi\in R_{K}(P)$, $\psi\in R_{K}(Q)$. By the formula given in part 1 of \cite[Lemma~7.1.3]{Bouc_2010}, the product in (\ref{eqn:prodalpha}) above has character
	\begin{equation*}
		g\mapsto \frac{1}{|P||Q|}\sum_{\substack{x\in P\\y\in Q}}\Ind_{U}^{G\times P}(\phi)(g,x)\Ind_{T}^{G\times Q}(\theta)(g,y)\chi(x)\psi(y),\qquad g\in G.
	\end{equation*}
	We may assume that $P\times Q\in\SS$. If $\sigma:(G\times P)\times(G\times Q)\isoto(G\times G)\times(P\times Q)$ and $\delta:\Delta(G)\isoto G$ denote the obvious group isomorphisms then
	\begin{equation}\label{eqn:alphaGtimesG}
	\Iso(\delta)\Res_{\Delta(G)}^{G\times G}\left(\alpha_{G\times G}\left(\left[\frac{(G\times G)\times(P\times Q)}{\sigma(U\times T),(\phi\times\theta)\circ\sigma^{-1}}\right]\tensor(\chi\tensor\psi)\right)\right),
	\end{equation}
	which is an element of $\im(\alpha_{G})$, has character
	\begin{equation*}
	g\mapsto\frac{1}{|P||Q|}\sum_{\substack{x\in P\\y\in Q}}\Ind_{\sigma(U\times T)}^{(G\times G)\times(P\times Q)}((\phi\times\theta)\circ\sigma^{-1})(g,g,x,y)\chi(x)\psi(y),\qquad g\in G.
	\end{equation*}
	But since
	\begin{align*}
	\Ind_{\sigma(U\times T)}^{(G\times G)\times(P\times Q)}((\phi\times\theta)&\circ\sigma^{-1})(g,g,x,y)\\
	&=\Ind_{U}^{G\times P}(\phi)(g,x)\Ind_{T}^{G\times Q}(\theta)(g,y),
	\end{align*}
	the characters of (\ref{eqn:prodalpha}) and (\ref{eqn:alphaGtimesG}) are seen to be equal, and the proof is complete.
\end{proof}

\begin{lemma}\label{lem:relprimetensor}
	If $G$ and $H$ are finite groups whose orders are relatively prime then $R_{K}(G\times H)\iso R_{K}(G)\tensor_{\Z}R_{K}(H)$.
\end{lemma}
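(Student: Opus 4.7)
The plan is to exhibit the canonical external-tensor-product map
\[
\Theta_{G,H}\colon R_K(G)\tensor_{\Z}R_K(H)\longto R_K(G\by H),\qquad [V]\tensor[W]\mapsto[V\boxtimes W],
\]
as an isomorphism by checking that it sends one distinguished $\Z$-basis bijectively onto another. By Lemma~\ref{lem:Brauerlem} we have $R_K=\overline{R}_K$ for every finite group in sight, so each of $R_K(G)$, $R_K(H)$, and $R_K(G\by H)$ carries a $\Z$-basis consisting of Galois class sums over $K$ of representatives for the $\Gal$-orbits on $\Irr_\C$, and $\Irr_\C(G\by H)=\{\chi\boxtimes\psi:\chi\in\Irr_\C(G),\,\psi\in\Irr_\C(H)\}$.

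First I would exploit the coprimality hypothesis: since $\gcd(|G|,|H|)=1$, at most one of $G,H$ has order divisible by $p$, so without loss of generality $p\nmid|H|$. Then the exponent of $H$ is coprime to $p$, hence $\Q(\psi)\subseteq\Q(\mu_{p'})\subseteq K$ for every $\psi\in\Irr_\C(H)$. In particular each such $\psi$ is already $K$-valued and fixed under Galois conjugation over $K$, so the Galois class sum $\Z$-basis of $R_K(H)$ is simply $\Irr_\C(H)$ itself.

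Next I would compute Galois class sums in $R_K(G\by H)$: for arbitrary $\chi\in\Irr_\C(G)$ and $\psi\in\Irr_\C(H)$ one has $K(\chi\boxtimes\psi)=K(\chi)$ (recovering $\chi(g)$ from $(\chi\boxtimes\psi)(g,1)/\psi(1)$, which makes sense since $\psi(1)\in\Z_{>0}$), and the $\Gal(K(\chi)/K)$-orbit of $\chi\boxtimes\psi$ equals $\{{}^\sigma\chi\boxtimes\psi:\sigma\in\Gal(K(\chi)/K)\}$ because $\psi$ is Galois-fixed. Consequently
\[
\overline{\chi\boxtimes\psi}=\overline{\chi}\boxtimes\psi=\Theta_{G,H}(\overline{\chi}\tensor\psi).
\]
Letting $\chi$ range over orbit representatives in $\Irr_\C(G)$ and $\psi$ over $\Irr_\C(H)$ yields precisely the distinguished $\Z$-bases on either side of $\Theta_{G,H}$, and the induced correspondence is manifestly a bijection.

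I do not anticipate a serious obstacle; the only points worth verifying are that $\Theta_{G,H}$ is well defined (immediate from $(\chi\boxtimes\psi)(g,h)=\chi(g)\psi(h)$, which shows the external tensor product of two $K$-characters is a $K$-character) and that the ``without loss of generality $p\nmid|H|$'' step is legitimate, which is forced by coprimality together with Lemma~\ref{lem:Brauerlem} and the containment $\mu_{p'}\subset K^{\times}$.
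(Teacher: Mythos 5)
Your proof is correct and takes essentially the same route as the paper: both reduce to the case where exactly one factor (say $H$) has order prime to $p$, observe that then every $\psi\in\Irr_{\C}(H)$ is $K$-valued so that $K(\chi\tensor\psi)=K(\chi)$ and $\overline{\chi\tensor\psi}=\overline{\chi}\tensor\psi$, and conclude by matching Galois-class-sum bases. You spell out the external-tensor-product map and the well-definedness check a bit more explicitly, but the substance is identical.
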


\begin{proof}
	We may assume that $p$ divides one of $|G|$ or $|H|$. Say $p$ divides $|G|$. If $\chi\in\Irr(G)$, $\psi\in\Irr(H)$ then $K(\chi\tensor\psi)=K(\chi)$ and the Galois class sum $\overline{\chi\tensor\psi}$, taken with respect to $K$, is equal to $\overline{\chi}\tensor\psi$. It follows that
	\begin{equation*}
		R_{K}(G\times H)=\overline{R}_{K}(G\times H)\iso\overline{R}_{K}(G)\tensor_{\Z}R_{\C}(H)=R_{K}(G)\tensor_{\Z}R_{K}(H).
	\end{equation*}
\end{proof}

\begin{theorem}\label{thm:alphaonto}
	The natural transformation $\alpha:\IND(\RES(R_{K}))\to R_{K}$, defined above, is surjective.
\end{theorem}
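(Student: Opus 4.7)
The plan is to use the Witt-Berman theorem (Theorem~\ref{thm:wittberman}) together with an analysis of $\Q(\mu_{p'})$-elementary groups. Naturality of $\alpha$ along induction morphisms gives $\Ind_H^G(\im(\alpha_H))\subseteq\im(\alpha_G)$ for every $H\subgp G$, so Witt-Berman immediately reduces the problem to showing that $\alpha_H$ is surjective whenever $H$ is $\Q(\mu_{p'})$-elementary. I would then fix such an $H=\gp{x}\semil Q$, $\Q(\mu_{p'})$-elementary for some prime $q$, and write $\gp{x}=C\times\gp{x_p}$ with $C=\gp{x_{p'}}$. The defining hypothesis that $Q$ centralizes $C$ yields $H=C\times H'$ with $H'=\gp{x_p}\semil Q$; since $|C|$ is coprime to both $p$ and $q$ and $|H'|$ is a $\set{p,q}$-number, Lemma~\ref{lem:relprimetensor} gives $R_K(H)\iso R_K(C)\tensor_\Z R_K(H')$.

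Because $|C|$ is coprime to $p$, $R_K(C)$ has a $\Z$-basis of $1$-dimensional characters $\phi:C\to\mu_{p'}$, each of which lifts along $H\onto C$ to a homomorphism $\tilde\phi:H\to\mu_{p'}$; the associated linear character of $H$ equals $\lin_H([H/H,\tilde\phi])\in\im(\alpha_H)$. Using that $\im(\alpha_H)$ is a subring (Lemma~\ref{lem:imagesubring}), a basis element $\phi\tensor\chi$ factors inside $R_K(H)$ as $K_{\tilde\phi}\cdot\Inf_{H/C}^{H}(\chi)$, so by naturality of $\alpha$ along $\Inf_{H/C}^{H}$ the product lies in $\im(\alpha_H)$ whenever $\chi\in\im(\alpha_{H'})$. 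It therefore suffices to prove that $\alpha_{H'}$ is surjective.

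If $q=p$ then $\gp{x_p}=\set{1}$, so $H'=Q$ is itself a $p$-group and the identity $\alpha_{H'}(\id_{H'}\tensor\chi)=\chi$ settles this case. I expect the main obstacle to be the remaining case $q\neq p$, where $H'=\gp{x_p}\semil Q$ is a supersolvable $\set{p,q}$-group that is no longer a $p$-group. Here I would invoke Taketa's theorem to write each irreducible complex character of $H'$ as $\Ind_U^{H'}(\lambda)$ for some $U=U_p\semil U_q\subgp H'$ (Schur-Zassenhaus) and some $1$-dimensional $\lambda:U\to\C^\times$. Decomposing $\lambda(g,h)=\theta(g)\eta(h)$ with $\theta=\lambda|_{U_p}$ having values in $\mu_{p^\infty}$ and $\eta=\lambda|_{U_q}$ having values in $\mu_{q^\infty}\subseteq\mu_{p'}$, the Galois group $\Gal(\bar\Q/K)$ fixes $\eta$ pointwise and averages only over $\theta$. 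Combined with the triviality of Schur indices over $K$ (Lemma~\ref{lem:Brauerlem}), this should identify each generator of $R_K(H')=\overline{R}_K(H')$ as a suitable $\Z$-combination of characters of the shape $\Ind_U^{H'}(\bar\theta\cdot\eta)$ with $\bar\theta\in R_K(U_p)$; and each such character is realized in $\im(\alpha_{H'})$ as $R_K(u)(\bar\theta)$ for the morphism $u\in\Hom_{\CC^{\mu_{p'}}}(U_p,H')$ built from the inclusion $U_p\into U\into H'$ together with the $\mu_{p'}$-valued twist $\Mult(\tilde\eta)$, where $\tilde\eta:U\to\mu_{p'}$ extends $\eta$ trivially across $U_p$. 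The one remaining subtlety, appearing only when $p=2$, is that some relevant $1$-dimensional $K$-characters may take values in $\set{\pm1}\cdot\mu_{2'}$ rather than in $\mu_{p'}$; the extra sign twist $\epsilon$ is then absorbed via the identity $\Ind_U^{H'}(\lambda_0\cdot\epsilon)=\Ind_{\ker\epsilon}^{H'}(\lambda_0|_{\ker\epsilon})-\Ind_U^{H'}(\lambda_0)$, which expresses the troublesome induction as a difference of genuinely $\mu_{p'}$-valued inductions lying in $\im(\alpha_{H'})$.
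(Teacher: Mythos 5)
Your opening reductions --- Witt--Berman to pass to $\Q(\mu_{p'})$-elementary subgroups, then splitting $H=C\times H'$ with $H'=\gp{x_p}\semil Q$ and invoking Lemma~\ref{lem:relprimetensor} together with Lemma~\ref{lem:imagesubring} --- track the paper's proof exactly. The divergence and the difficulty are both in the core case $q\neq p$, where the paper runs a Clifford-theory argument and you instead invoke monomiality of supersolvable groups.

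There the proposal has a genuine gap. Once $\chi=\Ind_U^{H'}(\lambda)$ with $\lambda=\theta\eta$, averaging over $\Gal(K(\theta)/K)$ does \emph{not} produce the basis element $\overline\chi$ but rather $d\cdot\overline\chi$ with $d=[K(\theta):K(\chi)]$, and this $d$ is frequently $>1$. Take $p=3$, $H'=C_9\semil C_2\iso D_{18}$, $U=U_p=C_9$, $\lambda$ faithful; then $K(\theta)=K(\zeta_9)$ while $K(\chi)=K(\zeta_9+\zeta_9^{-1})$, so $d=2$ and your average is $2\overline\chi$, not $\overline\chi$. You remark that ``a suitable $\Z$-combination'' of such inductions should give each generator, but that is exactly what must be proved and no argument is offered; since $\im(\alpha_{H'})$ is only a subring, $2\overline\chi\in\im(\alpha_{H'})$ does not yield $\overline\chi\in\im(\alpha_{H'})$. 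There is also a mechanical problem with the claimed morphism $u$: since $\tilde\eta|_{U_p}=1$, one has $\Mult(\tilde\eta)\circ\Ind_{U_p}^{U}=\Ind_{U_p}^{U}\circ\Mult(\tilde\eta|_{U_p})=\Ind_{U_p}^{U}$, so $u$ collapses to $\Ind_{U_p}^{H'}$ and the twist by $\eta$ disappears entirely; to realize $\Ind_U^{H'}(\bar\theta\cdot\eta)$ you would need to view $U_p$ as a \emph{quotient} of $U$ and use $\Inf_{U_p}^{U}$, which requires $U_q\nor U$ and is not automatic. The paper circumvents both issues by reducing (via inertia groups) to the case $C_Q(x)=\set{1}$ and then exhibiting $\overline\chi$ explicitly as the integral combination $\Ind_{Q}^{G}(1_Q)-\Ind_{\gp{x^{p^{a-1}}}Q}^{G}(1)$, which lies in $\im(\alpha_G)$ by Lemma~\ref{lem:imagesubring}. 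Until the ``$\Z$-combination'' claim is made precise and the realization of $\Ind_U^{H'}(\bar\theta\eta)$ inside $\im(\alpha_{H'})$ is repaired, the argument is incomplete.
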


\begin{proof}
	Fix a finite group $G$. We must show that $\alpha_{G}$ is surjective. From the definition of $\alpha$ it is clear that $\alpha_{G}$ is surjective if $G$ is a $p$-group. On the other hand, if $G$ has order prime to $p$ then since $\im(\alpha_{G})$ contains all of the monomial characters $\Ind_{H}^{G}(\phi)$ for $H\subgp G$ and $\phi:H\to\mu_{p'}$, Brauer's Induction Theorem implies that $\alpha_{G}$ is surjective. Thus we may assume that the order of $G$ is divisible by $p$.
	
	Let $\mathcal{H}^{p'}$ denote the set of $\Q(\mu_{p'})$-elementary subgroups of $G$. Then by naturality of $\alpha$ there is a commutative diagram
	\begin{equation*}
	\begin{tikzcd}
	\underset{H\in\mathcal{H}^{p'}}{\bigoplus}\IND(\RES(R_{K}))(H) \arrow[d, "\dsum\alpha_{H}"'] \arrow[r, "\dsum\Ind_{H}^{G}"] & \IND(\RES(R_{K}))(G) \arrow[d, "\alpha_{G}"] \\
	\underset{H\in\mathcal{H}^{p'}}{\bigoplus}R_{K}(H) \arrow[r, "\dsum\Ind_{H}^{G}"', two heads]                                     & R_{K}(G).                                          
	\end{tikzcd}
	\end{equation*}
	The lower horizontal map is surjective by the Witt-Berman Theorem (see Theorem \ref{thm:wittberman}). Observe that if $\alpha_{H}$ is surjective for each $H\in\mathcal{H}^{p'}$ then $\alpha_{G}$ is surjective. Therefore we may assume that $G$ is $\Q(\mu_{p'})$-elementary for some prime number $q$. Then $G=\gp{x}\semil Q$ for some $q$-group $Q$ and an element $x$ of order prime to $q$ such that $Q$ centralizes the $p$-complement of $\gp{x}$.
	
	Suppose that $q=p$. Then $Q$ centralizes $\gp{x}$, so $G=\gp{x}\times Q$ and $R_{K}(G)\iso R_{K}(\gp{x})\tensor_{\Z}R_{K}(Q)$ by Lemma \ref{lem:relprimetensor}. A $\Z$-basis for $R_{K}(G)$ is formed by the set of characters $\lambda\tensor\overline{\chi}$ where $\lambda\in\Irr(\gp{x})$ and $\overline{\chi}\in R_{K}(Q)$ is a Galois class sum of irreducible complex characters of $Q$. Fix such a character $\lambda\tensor\overline{\chi}$. After identifying $Q$ with the quotient $G/\gp{x}$ we have
	\begin{equation*}
	(\Mult(\lambda\tensor 1_{Q})\circ\Inf_{Q}^{G})(\overline{\chi})=\Mult(\lambda\tensor 1_{Q})(1_{\gp{x}}\tensor\overline{\chi})=\lambda\tensor\overline{\chi},
	\end{equation*}
	where $1_{Q}$ and $1_{\gp{x}}$ denote the principal characters of $Q$ and $\gp{x}$, respectively. Since $\overline{\chi}\in\im(\alpha_{Q})$, the computation above shows that $\lambda\tensor\overline{\chi}\in\im(\alpha_{G})$. So $\alpha_{G}$ is surjective in this case.
	
	Suppose instead that $q\neq p$. Arguing as in the previous paragraph, we may assume without loss of generality that $\gp{x}$ is a $p$-group. Now if $p=2$ then $\Aut(\gp{x})$ is a $2$-group, so $G=\gp{x}\times Q$ and the characters $\overline{\lambda}\tensor\chi$, for $\overline{\lambda}\in R_{K}(\gp{x})$ a Galois class sum and $\chi\in\Irr(Q)$, form a $\Z$-basis of $R_{K}(G)$. Since $\alpha_{\gp{x}}$ and $\alpha_{Q}$ are surjective (as noted above) the inflated characters $\overline{\lambda}\tensor 1_{Q}$ and $1_{\gp{x}}\tensor\chi$ are both contained in $\im(\alpha_{G})$. Lemma \ref{lem:imagesubring} then implies that $\overline{\lambda}\tensor\chi\in\im(\alpha_{G})$, so $\alpha_{G}$ is surjective. Thus we may assume that $p\neq 2$. With this additional assumption, notice that the group $\Aut(\gp{x})$ is cyclic.
	
	Let $\chi\in\Irr(G)$ and let $\overline{\chi}\in\overline{R}_{K}(G)=R_{K}(G)$ be the corresponding Galois class sum. If we show that $\overline{\chi}\in\im(\alpha_{G})$ then, since $\chi$ was chosen arbitrarily, it will follow that $\alpha_{G}$ is surjective. Toward this end we may assume without loss of generality that $\chi$ is faithful. Let $\lambda\in\Irr(\gp{x})$ be an irreducible constituent of $\Res_{\gp{x}}^{G}(\chi)$. Then $\ker(\lambda)\nor G$ (in fact, every subgroup of $\gp{x}$ is normal in $G$). It follows that the kernel of $\lambda$ coincides with the kernel of $\Res_{\gp{x}}^{G}(\chi)$, hence $\ker(\lambda)=\set{1}$.
	
	Set 
	\begin{equation*}
		T=I_{G}(\lambda)=\set{g\in G:{}^{g}\lambda=\lambda},
	\end{equation*}
	the inertia group of $\lambda$ in $G$, and set $C=C_{Q}(x)$. Since $\lambda$ is faithful we have $T=C_{G}(x)=\gp{x}\times C$, hence $T\nor G$.
	
	Observe that the Galois conjugacy class of $\lambda$ over $K$ is equal to the set of faithful irreducible characters of $\gp{x}$, and that this set is stable under the action of $Q$ by conjugation. Let $\lambda=\lambda_{1},\lambda_{2},\ldots,\lambda_{n}$ denote representatives for the $Q$-orbits, and note that $I_{G}(\lambda_{i})=T$ for each $\lambda_{i}$. For each integer $i=1,2,\ldots,n$ set
	\begin{equation*}
		\Irr(G|\lambda_{i})=\set{\psi\in\Irr(G):\lambda_{i}\text{ is a constituent of }\Res_{\gp{x}}^{G}(\psi)}.
	\end{equation*}
	Define $\Irr(T|\lambda_{i})$ analogously. Recall (\cite[Theorem~6.11]{Isaacs_1976}) that induction from $T$ to $G$ yields a bijection
	\begin{equation}\label{eqn:irroverlambdabijection}
		\Ind_{T}^{G}:\Irr(T|\lambda_{i})\isoto\Irr(G|\lambda_{i}).
	\end{equation}
	Since $\Irr(T|\lambda)=\set{\lambda\tensor\beta:\beta\in\Irr(C)}$, it follows that there is a unique $\beta\in\Irr(C)$ such that $\chi=\Ind_{T}^{G}(\lambda\tensor\beta)$. Let $\beta=\beta_{1},\beta_{2},\ldots,\beta_{m}$ denote the distinct $Q$-conjugates of $\beta$, and note that $K(\chi)\subseteq K(\lambda\tensor\beta)=K(\lambda)$.
	
	Fix a pair $\lambda_{i},\beta_{j}$. Then $\beta_{j}={}^{u}\beta$ for some $u\in Q$ and ${}^{u^{-1}}\lambda_{i}={}^{\sigma}\lambda$ for some $\sigma\in\Gal(K(\lambda)/K)$. We compute:
	\begin{align*}
		\Ind_{T}^{G}(\lambda_{i}\tensor\beta_{j})&=\Ind_{T}^{G}(\lambda_{i}\tensor{}^{u}\beta)=\Ind_{T}^{G}({}^{u^{-1}}\lambda_{i}\tensor\beta)=\Ind_{T}^{G}({}^{\sigma}\lambda\tensor\beta)\\
			&=\Ind_{T}^{G}({}^{\sigma}(\lambda\tensor\beta))={}^{\sigma}\Ind_{T}^{G}(\lambda\tensor\beta)\\
			&={}^{\sigma}\chi.
	\end{align*}
	Thus $\Ind_{T}^{G}(\lambda_{i}\tensor\beta_{j})$ is Galois conjugate to $\chi$ over $K$. In particular, $\Ind_{T}^{G}(\lambda_{i}\tensor\beta_{j})\in\Irr(G)$. Since each $\sigma\in\Gal(K(\chi)/K)$ extends to an automorphism of $K(\lambda)$ the equalities above also show that every Galois conjugate of $\chi$ over $K$ is equal to $\Ind_{T}^{G}(\lambda_{i}\tensor\beta_{j})$ for some $\lambda_{i},\beta_{j}$.
	
	Now suppose that $\Ind_{T}^{G}(\lambda_{i}\tensor\beta_{j})=\Ind_{T}^{G}(\lambda_{k}\tensor\beta_{\ell})$ for some $1\leq i,k\leq n$, $1\leq j,\ell\leq m$. For ease, set $\psi=\Ind_{T}^{G}(\lambda_{i}\tensor\beta_{j})$. Then $\lambda_{i}$ and $\lambda_{k}$ are both constituents of $\Res_{\gp{x}}^{G}(\psi)$. Since $\psi$ is irreducible it follows that $\lambda_{i}$ and $\lambda_{k}$ are conjugate, hence $\lambda_{i}=\lambda_{k}$. In particular, $\lambda_{i}\tensor\beta_{j},\lambda_{k}\tensor\beta_{\ell}\in\Irr(T|\lambda_{i})$. The bijection (\ref{eqn:irroverlambdabijection}) above then implies that $\lambda_{i}\tensor\beta_{j}=\lambda_{k}\tensor\beta_{\ell}$.
	
	We have shown that the characters $\Ind_{T}^{G}(\lambda_{i}\tensor\beta_{j})$, $1\leq i\leq n$, $1\leq j\leq m$, are distinct and are precisely the Galois conjugates of $\chi$ over $K$. Therefore
	\begin{equation*}
		\overline{\chi}=\sum_{i=1}^{n}\sum_{j=1}^{m}\Ind_{T}^{G}(\lambda_{i}\tensor\beta_{j}).
	\end{equation*}
	Now because the quotient $Q/C$ is cyclic there exists a character $\theta\in R_{K}(Q)$ that extends $\sum_{j=1}^{m}\beta_{j}$. We compute that
	\begin{align*}
		\overline{\chi}&=\sum_{i=1}^{n}\Ind_{T}^{G}(\lambda_{i}\tensor\Res_{C}^{Q}(\theta))=\sum_{i=1}^{n}\Ind_{T}^{G}(\lambda_{i}\tensor 1_{C})\Inf_{Q}^{G}(\theta)\\
		&=\Ind_{T}^{G}\left(\sum_{i=1}^{n}\lambda_{i}\tensor 1_{C}\right)\Inf_{Q}^{G}(\theta).
	\end{align*}
	Notice that $\Inf_{Q}^{G}(\theta)\in\im(\alpha_{G})$. If $g\in G$ then
	\begin{equation*}
	\Ind_{T}^{G}\left(\sum_{i=1}^{n}\lambda_{i}\tensor 1_{C}\right)(g)=\begin{cases}
		(\overline{\lambda}\tensor 1)(g)	& \text{if }g\in T,\\
		0	& \text{if }g\notin T.
	\end{cases}
	\end{equation*}
	In the above, $\overline{\lambda}$ denotes the Galois class sum corresponding to $\lambda$, taken with respect to $K$. In other words, $\overline{\lambda}$ is the sum of the faithful irreducible complex characters of $\gp{x}$. The formula shows that $\Ind_{T}^{G}(\sum_{i=1}^{n}\lambda_{i}\tensor 1_{C})$ is a $K$-valued character of $G$ with kernel $C$, i.e., is an element of $R_{K}(G)$ that is inflated from $R_{K}(G/C)$. It follows that if $\alpha_{G/C}$ is surjective then $\Ind_{T}^{G}(\sum_{i=1}^{n}\lambda_{i}\tensor 1_{C})\in\im(\alpha_{G})$. Since $\im(\alpha_{G})$ is closed under multiplication by Lemma \ref{lem:imagesubring}, this in turn implies that $\overline{\chi}\in\im(\alpha_{G})$, as needed. 
	
	By the argument just given we may assume without loss of generality that $C=C_{Q}(x)=\set{1}$. Then $T=\gp{x}$. Let us say that $|x|=p^{a}$, where $a\geq 1$. We have $\overline{\chi}=\sum_{i=1}^{n}\Ind_{\gp{x}}^{G}(\lambda_{i})$, so $\overline{\chi}(1)=n|Q|=\varphi(p^{a})$ because $Q$ acts freely on the set of faithful irreducible complex characters of $\gp{x}$ (here $\varphi$ denotes Euler's totient function). Notice that $\Res_{Q}^{G}(\chi)=\Res_{Q}^{G}(\Ind_{\gp{x}}^{G}(\lambda))$ is equal to the regular character of $Q$, so $\chi$ is a constituent of $\Ind_{Q}^{G}(1_{Q})$. Since $\Ind_{Q}^{G}(1_{Q})$ is $K$-valued, it follows that $\overline{\chi}$ is a constituent of $\Ind_{Q}^{G}(1_{Q})$. Further, since the kernel of $\Ind_{\gp{x^{p^{a-1}}}Q}^{G}(1_{\gp{x^{p^{a-1}}}Q})$ contains the nontrivial subgroup $\gp{x^{p^{a-1}}}$ and since $\chi$, hence each Galois conjugate of $\chi$, is faithful, $\overline{\chi}$ is a constituent of the character
	\begin{equation*}
		\Ind_{Q}^{G}(1_{Q})-\Ind_{\gp{x^{p^{a-1}}}Q}^{G}(1_{\gp{x^{p^{a-1}}}Q}).
	\end{equation*}
	In fact, comparing degrees shows that $\overline{\chi}$ is equal to the character above. That $\overline{\chi}\in\im(\alpha_{G})$ now follows from Lemma \ref{lem:imagesubring}, which states in particular that $1_{H}\in\im(\alpha_{H})$ for all finite groups $H$. The proof is complete.
\end{proof}

We remark that the transformation $\alpha$ is not injective: if $G$ is a group whose order is not divisible by $p$ then $R_{K}(G)=R_{\C}(G)$ and there is an isomorphism $\IND(\RES(R_{K}))(G)\iso B^{\C^{\times}}(G)$ that makes the diagram below commute:
\begin{equation*}
	\begin{tikzcd}
	\IND(\RES(R_{K}))(G) \arrow[d, "\wr"'] \arrow[r, "\alpha_{G}"] & R_{\C}(G) \\
	B^{\C^{\times}}(G). \arrow[ru, "\lin_{G}"']                     &          
	\end{tikzcd}
\end{equation*}
Here $\lin_{G}$ denotes the \textit{linearization morphism} which maps $\left[\frac{G}{H,\phi}\right]\mapsto\Ind_{H}^{G}(\phi)$; see \cite[Section~11B]{Boltje_2018}.

Next we claim that there are equalities of $p$-biset functors
\begin{equation*}
	\RES(R_{K})=\overline{R}_{\Q}\qquad\text{and}\qquad\RES(\im(\kappa))=R_{\Q}.
\end{equation*}
Since $\im(\kappa)\subseteq R_{K}$ and $R_{\Q}\subseteq\overline{R}_{\Q}$ are subfunctors of $R_{\C}$, to prove the claim it suffices to show that $R_{K}(P)=\overline{R}_{\Q}(P)$ and $\im(\kappa_{P})=R_{\Q}(P)$ for any $p$-group $P$. So fix a $p$-group $P$ and recall that $R_{\C}(P)=\overline{R}_{\Q(\zeta_{p^{a}})}(P)$ for some primitive $p^{a}$-th root of unity $\zeta_{p^{a}}$. By Lemma \ref{lem:Brauerlem} we have $R_{K}(P)=\overline{R}_{K}(P)$, so
\begin{equation*}
R_{K}(P)=\overline{R}_{K}(P)=\overline{R}_{\Q(\zeta_{p^{a}})}(P)\cap\overline{R}_{K}(P)=\overline{R}_{\Q}(P).
\end{equation*}
On the other hand, from the description of $\im(\kappa)$ given in Example \ref{ex:trivsourcering} of Subsection \ref{subsec:examples} we can compute that
\begin{equation*}
\im(\kappa_{P})=\gp{\Ind_{Q}^{P}(1_{Q}):Q\subgp P}=R_{\Q}(P).
\end{equation*}
The last equality follows from the Ritter-Segal Theorem; see \cite{Ritter_1972} and \cite{Segal_1972}. 

Since $\RES(R_{K}/\im(\kappa))=\overline{R}_{\Q}/R_{\Q}$, by the adjunction of Proposition \ref{prop:resandindadjunct} there is a natural transformation $\overline{\alpha}:\IND(\overline{R}_{\Q}/R_{\Q})\to R_{K}/\im(\kappa)$ such that the diagram
\begin{equation*}
\begin{tikzcd}
\IND(\overline{R}_{\Q}) \arrow[d, "\alpha"', two heads] \arrow[r] & \IND(\overline{R}_{\Q}/R_{\Q}) \arrow[d, "\overline{\alpha}"] \\
R_{K} \arrow[r, two heads]                                        & R_{K}/\im(\kappa)                                            
\end{tikzcd}
\end{equation*}
commutes. Because $\alpha$ is surjective by Theorem \ref{thm:alphaonto}, $\overline{\alpha}$ is surjective as well. Combining this observation with Lemma \ref{lem:roquette} gives the following result.

\begin{corollary}\label{cor:quotientisF2space}
	We have $\im(\kappa)\neq R_{K}$ only when $p=2$, and in this case $R_{K}(G)/\im(\kappa_{G})$ is a (finite-dimensional) vector space over $\F_{2}=\Z/2\Z$ for every finite group $G$.
\end{corollary}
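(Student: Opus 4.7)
The plan is to deduce both conclusions from the surjective natural transformation $\overline{\alpha}:\IND(\overline{R}_{\Q}/R_{\Q})\to R_{K}/\im(\kappa)$ established just before the statement, using Lemma \ref{lem:roquette} to control the $p$-biset functor $\overline{R}_{\Q}/R_{\Q}$ entering into the induction.

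First, suppose $p\neq 2$. By Lemma \ref{lem:roquette}, $\overline{R}_{\Q}(P)=R_{\Q}(P)$ for every $p$-group $P$, so the $p$-biset functor $\overline{R}_{\Q}/R_{\Q}$ is identically zero. From the explicit formula for $\IND$ as a quotient of $\bigoplus_{P\in\SS}\Hom_{\CC^{\mu_{p'}}}(P,G)\tensor_{\Z}(\overline{R}_{\Q}/R_{\Q})(P)$, each summand vanishes, hence $\IND(\overline{R}_{\Q}/R_{\Q})(G)=0$. Surjectivity of $\overline{\alpha}$ then forces $R_{K}(G)/\im(\kappa_{G})=0$, i.e., $\im(\kappa)=R_{K}$.

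Next, suppose $p=2$. By Lemma \ref{lem:roquette} again, $(\overline{R}_{\Q}/R_{\Q})(P)$ is an $\F_{2}$-vector space for every $2$-group $P$, so every tensor factor $(\overline{R}_{\Q}/R_{\Q})(P)$ in the direct sum defining $\IND(\overline{R}_{\Q}/R_{\Q})(G)$ is annihilated by $2$. Consequently each simple tensor $v\tensor\theta$ is $2$-torsion, so $\IND(\overline{R}_{\Q}/R_{\Q})(G)$ itself is an $\F_{2}$-vector space. Surjectivity of $\overline{\alpha}_{G}$ transfers this property to the quotient: $R_{K}(G)/\im(\kappa_{G})$ is an $\F_{2}$-vector space.

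Finally, for finite-dimensionality, note that $R_{K}(G)$ is a finitely generated abelian group (free of rank equal to the number of Galois conjugacy classes of irreducible complex characters of $G$ over $K$), hence so is its quotient $R_{K}(G)/\im(\kappa_{G})$. A finitely generated abelian group that is an $\F_{2}$-vector space is necessarily a finite-dimensional one. I do not expect any genuine obstacle here; the whole argument is a direct consequence of the surjectivity of $\overline{\alpha}$ (Theorem \ref{thm:alphaonto} combined with the commutative square) and Lemma \ref{lem:roquette}, with the only mild care being to observe that tensoring by an $\F_{2}$-module forces the whole induced group to be $2$-torsion before quotienting.
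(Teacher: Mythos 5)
Your proof is correct and follows essentially the same route the paper intends: the paper's text states that Corollary \ref{cor:quotientisF2space} follows by ``combining'' the surjectivity of $\overline{\alpha}$ with Lemma \ref{lem:roquette}, and you have simply spelled out the details of that combination. The observations that $\IND$ of the zero functor is zero, that tensoring over $\Z$ with a $2$-torsion group yields a $2$-torsion group, and that $R_{K}(G)$ is finitely generated are all routine and correctly handled.
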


Our next aim is to classify the subfunctors of $R_{K}/\im(\kappa)$ in the case $p=2$. We require two lemmas.

\begin{lemma}\label{lem:robertsinductionlemma}
	Let $G$ be a finite group, let $\mathcal{H}$ be the set of quasi-elementary subgroups of $G$, and let
	\begin{equation*}
		\mathcal{U}=\set{U\subgp G:U\text{ has a normal Sylow }p\textit{-subgroup}}.
	\end{equation*}
	Then the principal character $1_{G}$ of $G$ is a $\Z$-linear combination of characters of the form $\Ind_{H}^{G}(\phi)$ where $H\in\mathcal{H}\cap\mathcal{U}$ and $\phi:H\to\mu_{p'}$. In particular, if $\theta\in R_{K}(G)$ then
	\begin{equation*}
		\theta=\sum a_{i}\Ind_{H_{i}}^{G}(\phi_{i}\Res_{H_{i}}^{G}(\theta))
	\end{equation*}
	for some $a_{i}\in\Z$, $H_{i}\in\mathcal{H}\cap\mathcal{U}$, and $\phi_{i}:H_{i}\to\mu_{p'}$.
\end{lemma}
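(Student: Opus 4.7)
My plan is as follows. The ``in particular'' clause is a direct consequence of the first assertion: if $1_G = \sum_i a_i \Ind_{H_i}^G(\phi_i)$ is such a decomposition, then multiplying both sides by $\theta \in R_K(G)$ and applying the projection formula $\chi \cdot \Ind_H^G(\psi) = \Ind_H^G(\Res_H^G(\chi) \cdot \psi)$ yields
\begin{equation*}
\theta = \theta \cdot 1_G = \sum_i a_i \Ind_{H_i}^G\bigl(\phi_i \cdot \Res_{H_i}^G(\theta)\bigr),
\end{equation*}
so the main work is the first assertion.

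To prove the first assertion I would start from the Witt--Berman theorem (Theorem \ref{thm:wittberman}) applied to $1_G$, which gives $1_G = \sum_i \Ind_{H_i}^G(\theta_i)$ with each $H_i \leq G$ a $\Q(\mu_{p'})$-elementary subgroup and $\theta_i \in R_K(H_i)$. The first key observation is that every $\Q(\mu_{p'})$-elementary subgroup $H$ lies in $\mathcal{H} \cap \mathcal{U}$. Indeed, writing $H = \gp{x} \rtimes Q$ as a $q$-quasi-elementary group in which $Q$ centralizes the $p$-complement $\gp{y}$ of $\gp{x}$, and decomposing $\gp{x} = \gp{y} \times \gp{z}$ with $|z|$ a power of $p$, one finds $H = \gp{y} \times (\gp{z} \rtimes Q)$. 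When $q \neq p$, the Sylow $p$-subgroup of $H$ is $\gp{z}$, a characteristic subgroup of the normal cyclic $\gp{x}$, hence normal in $H$. When $q = p$, the subgroup $\gp{z}$ is trivial and $H = \gp{y} \times Q$ with $Q$ a normal Sylow $p$-subgroup. Thus $H \in \mathcal{H} \cap \mathcal{U}$ in either case.

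It then suffices to show that for each such $H$ and each $\theta \in R_K(H)$, the character $\theta$ is a $\Z$-linear combination of $\Ind_K^H(\phi)$ with $K \leq H$ in $\mathcal{H} \cap \mathcal{U}$ and $\phi \colon K \to \mu_{p'}$; by transitivity of induction this will yield the desired decomposition of $1_G$. For this reduction I would exploit the coprime direct product decomposition $H = \gp{y} \times P$ (where $P = \gp{z} \rtimes Q$ has normal Sylow $p$-subgroup, and $\gcd(|y|, |P|) = 1$ since $|y|$ is prime to both $p$ and $q$) together with Lemma \ref{lem:relprimetensor} to identify $R_K(H) \iso R_K(\gp{y}) \tensor_{\Z} R_K(P)$. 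The irreducible characters of the cyclic $p'$-group $\gp{y}$ are automatically linear and $\mu_{p'}$-valued, and I would then decompose characters of $P$ via the Ritter--Segal theorem applied to its normal Sylow $p$-subgroup, combined with inflation and induction from quasi-elementary subgroups of $P$ having normal Sylow $p$-subgroup.

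The most delicate point is the final step when $p = 2$: in this situation $R_K(P)$ can strictly contain $R_\Q(P)$ (reflecting the non-rational ``quaternion'' characters $\gamma_n$ of Lemma \ref{lem:repthyquatgp}), so Ritter--Segal alone does not realize every character of $P$ as a permutation character. I anticipate surmounting this by enlarging the collection of admissible subgroups $K \leq H$ to include ones whose cyclic $p'$-part properly extends $\gp{y}$ and by invoking the rational $2$-biset functor machinery of Section \ref{sec:rationalpbisetfunctors}, which expresses the quaternion contributions through induction from smaller quasi-elementary subgroups carrying $\mu_{2'}$-valued linear characters.
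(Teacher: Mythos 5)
Your treatment of the ``in particular'' clause (projection formula applied to $\theta\cdot 1_{G}$) and your observation that every $\Q(\mu_{p'})$-elementary subgroup lies in $\mathcal{H}\cap\mathcal{U}$ are both correct, and the latter matches how such groups decompose. The gap is in the core reduction. After Witt--Berman you must handle arbitrary virtual characters $\theta_{i}\in R_{K}(H_{i})$, and you reduce to the claim that every $\theta\in R_{K}(H)$, for $H$ a $\Q(\mu_{p'})$-elementary group, is a $\Z$-linear combination of characters $\Ind_{L}^{H}(\phi)$ with $L\leq H$ and $\phi:L\to\mu_{p'}$. That claim is false precisely in the case this paper cares about: take $p=2$ and $H=Q_{8}$ (which is elementary for $2$). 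Every homomorphism from a subgroup of a $2$-group to $\mu_{2'}$ is trivial, so the span of all such induced characters is the span of permutation characters, namely $R_{\Q}(Q_{8})$, which by Lemma \ref{lem:repthyquatgp} does not contain $\gamma_{3}\in R_{K}(Q_{8})=\overline{R}_{\Q}(Q_{8})$. Your closing paragraph acknowledges the difficulty, but the proposed remedies do not resolve it: ``enlarging'' the family of subgroups cannot help inside a group like $Q_{8}$ whose $2'$-part is trivial, and the rational $2$-biset functor machinery of Section \ref{sec:rationalpbisetfunctors} describes the obstruction $\overline{R}_{\Q}/R_{\Q}$ rather than removing it. Since Witt--Berman gives no control over which $\theta_{i}$ occur, your strategy stalls at a statement that is genuinely false, even though the lemma itself (which only concerns $1_{G}$) is true.

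The missing ingredient is an induction theorem that already produces $\mu_{p'}$-valued monomial characters from subgroups with normal Sylow $p$-subgroups, and this is exactly what the paper imports: Dress's induction theorem for the trivial source ring (\cite[Corollary~2]{Dress_1975}) expresses the class of the trivial $\OO G$-module, hence $1_{G}$ after taking characters, as a $\Z$-combination of $\Ind_{U}^{G}(\phi)$ with $U\in\mathcal{U}$ and $\phi:U\to\mu_{p'}$. The paper then writes $1_{G}=\sum c_{j}\Ind_{H_{j}}^{G}(1_{H_{j}})$ with $H_{j}$ quasi-elementary by Solomon's induction theorem, multiplies the two expressions, and applies Mackey's formula so that the resulting inducing subgroups are intersections ${}^{g}H_{j}\cap U_{i}$, which lie in $\mathcal{H}\cap\mathcal{U}$ because both families are closed under conjugation and subgroups. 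Neither Witt--Berman nor Ritter--Segal can substitute for Dress's theorem here: Brauer/Witt--Berman-type results give no control on the fiber group of the linear characters, and Ritter--Segal only concerns rational characters of $p$-groups. If you want to keep your architecture, you would need to replace the false intermediate claim by the special statement about $1_{G}$ and prove it from a result about $p$-permutation modules, which is essentially the paper's route.
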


\begin{proof}
	By \cite[Corollary~2]{Dress_1975} the isomorphism class of the trivial $\OO G$-module $\OO$ is a $\Z$-linear combination in $T_{\OO}(G)$ of (isomorphism classes of) modules of the form $\Ind_{U}^{G}(\OO_{\phi})$, where $U\in\mathcal{U}$ and $\phi:U\to\mu_{p'}$. Taking characters, we find that $1_{G}=\sum b_{i}\Ind_{U_{i}}^{G}(\phi_{i})$ for some integers $b_{i}$, $U_{i}\in\mathcal{U}$, and $\phi_{i}:U_{i}\to\mu_{p'}$. On the other hand, by Solomon's Induction Theorem (see \cite[Theorem~8.10]{Isaacs_1976}) we have $1_{G}=\sum c_{j}\Ind_{H_{j}}^{G}(1_{H_{j}})$ for some integers $c_{j}$ and subgroups $H_{j}\in\mathcal{H}$. We compute
	\begin{equation*}
		1_{G}=1_{G}\cdot 1_{G}=\sum_{i,j}b_{i}c_{j}\Ind_{U_{i}}^{G}(\phi_{i})\Ind_{H_{j}}^{G}(1_{H_{j}})
	\end{equation*}
	and, by applying Mackey's Theorem,
	\begin{equation*}
		\Ind_{U_{i}}^{G}(\phi_{i})\Ind_{H_{j}}^{G}(1_{H_{j}})=\sum_{g\in U_{i}\backslash G/H_{j}}\Ind_{{}^{g}H_{j}\cap U_{i}}^{G}(\Res_{{}^{g}H_{j}\cap U_{i}}^{U_{i}}(\phi_{i})).
	\end{equation*}
	Since $\mathcal{H}$ is stable under conjugation and since both $\mathcal{H}$ and $\mathcal{U}$ are closed under taking subgroups, ${}^{g}H_{j}\cap U_{i}\in\mathcal{H}\cap\mathcal{U}$. The first assertion has been proved, and the second follows from a short computation beginning with the equality $\theta=\theta\cdot 1_{G}$.
\end{proof}

\begin{lemma}\label{lem:RKforeltyfor2group}
	Suppose $p=2$ and let $H=\gp{x}\times P$ be elementary for $2$ (so $x$ has odd order and $P$ is a $2$-group). Then there is an $\F_{2}$-isomorphism
	\begin{equation*}
		R_{K}(H)/\im(\kappa_{H})\iso R_{\C}(\gp{x})\tensor_{\Z}(\overline{R}_{\Q}/R_{\Q})(P).
	\end{equation*}
\end{lemma}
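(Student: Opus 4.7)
The strategy is to leverage the coprime direct-product structure of $H$ to reduce everything to a tensor-product calculation. First, since $|\gp{x}|$ is odd, every irreducible complex character of $\gp{x}$ is one-dimensional with values in $\mu_{|\gp{x}|}\subseteq\mu_{2'}\subseteq K^{\times}$, so $R_{K}(\gp{x})=R_{\C}(\gp{x})$. Combining this with the equality $R_{K}(P)=\overline{R}_{\Q}(P)$ (established for the $2$-group $P$ in the paragraph preceding Corollary \ref{cor:quotientisF2space}) and applying Lemma \ref{lem:relprimetensor} to the coprime-order factors, one obtains a canonical ring isomorphism
$$\Psi\colon R_{K}(H)\isoto R_{\C}(\gp{x})\tensor_{\Z}\overline{R}_{\Q}(P).$$

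The crux of the proof is to show that $\Psi$ identifies $\im(\kappa_{H})$ with $R_{\C}(\gp{x})\tensor_{\Z}R_{\Q}(P)$. For the inclusion $R_{\C}(\gp{x})\tensor_{\Z}R_{\Q}(P)\subseteq\Psi(\im(\kappa_{H}))$, I would combine two facts already available: $\im(\kappa_{\gp{x}})=R_{\C}(\gp{x})$ (by Brauer's induction theorem, together with the observation that every linear character of a subgroup of $\gp{x}$ takes values in $\mu_{2'}$), and $\im(\kappa_{P})=R_{\Q}(P)$ (recorded in the paragraph preceding Corollary \ref{cor:quotientisF2space}). The preimage under $\Psi$ of a simple tensor $\lambda\tensor\chi$ is the product $\Inf_{H/P}^{H}(\lambda)\cdot\Inf_{H/\gp{x}}^{H}(\chi)$ in $R_{K}(H)$. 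Since $\im(\kappa)$ is a subfunctor of $R_{K}$ and hence stable under inflation, and since $\im(\kappa_{H})$ is a subring of $R_{K}(H)$ (as $\kappa_{H}$ is a ring homomorphism), this product lies in $\im(\kappa_{H})$.

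For the reverse inclusion, the coprimality of $|\gp{x}|$ and $|P|$ forces every subgroup $U\subgp H$ to decompose as $U=U_{1}\times U_{2}$ with $U_{1}\subgp\gp{x}$ and $U_{2}\subgp P$, since each $u\in U$ is the product of its $p$-part and $p'$-part and both are powers of $u$. Any homomorphism $\phi\colon U\to\mu_{2'}$ is trivial on the $2$-group $U_{2}$, as $\mu_{2'}$ has no $2$-torsion, so $\phi$ is determined by its restriction $\phi_{1}\colon U_{1}\to\mu_{2'}$. The standard compatibility of induction with direct products then shows that $\Psi$ sends $\Ind_{U}^{H}(\phi)$ to $\Ind_{U_{1}}^{\gp{x}}(\phi_{1})\tensor\Ind_{U_{2}}^{P}(1_{U_{2}})$, and the second factor is a permutation character of $P$, hence lies in $R_{\Q}(P)$. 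Since $\im(\kappa_{H})$ is generated by such induced characters (Dress's theorem, recalled in Example \ref{ex:trivsourcering}), the containment $\Psi(\im(\kappa_{H}))\subseteq R_{\C}(\gp{x})\tensor_{\Z}R_{\Q}(P)$ follows.

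The lemma then drops out: $R_{\C}(\gp{x})$ is $\Z$-free of finite rank, hence flat, so tensoring the short exact sequence $0\to R_{\Q}(P)\to\overline{R}_{\Q}(P)\to\overline{R}_{\Q}(P)/R_{\Q}(P)\to 0$ with $R_{\C}(\gp{x})$ produces the desired $\F_{2}$-isomorphism $R_{K}(H)/\im(\kappa_{H})\iso R_{\C}(\gp{x})\tensor_{\Z}(\overline{R}_{\Q}/R_{\Q})(P)$. I expect the main technical point to be the subgroup decomposition argument used in the reverse inclusion, where the elementary-for-$2$ hypothesis is genuinely exploited; once this is in place, the remainder is tensor-product bookkeeping.
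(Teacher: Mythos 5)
Your proof is correct, and it takes a genuinely different and in some ways more elementary route than the paper. Both proofs set up the tensor-product isomorphism $\Psi\colon R_{K}(H)\isoto R_{\C}(\gp{x})\tensor_{\Z}\overline{R}_{\Q}(P)$ via Lemma \ref{lem:relprimetensor} and finish by tensoring the short exact sequence $R_{\Q}(P)\into\overline{R}_{\Q}(P)\onto(\overline{R}_{\Q}/R_{\Q})(P)$ with the flat $\Z$-module $R_{\C}(\gp{x})$. The difference is how one identifies $\Psi(\im(\kappa_{H}))$ with $R_{\C}(\gp{x})\tensor_{\Z}R_{\Q}(P)$. The paper inserts $\Psi$ into a commutative square whose top row is the map $T_{\OO}(\gp{x})\tensor_{\Z}T_{\OO}(P)\to T_{\OO}(H)$ and proves that top map is an isomorphism, which takes Dress's theorem for surjectivity plus a $\Z$-rank count via the Green correspondence, together with the bijectivity of $\kappa_{\gp{x}}$; one then compares cokernels of the two vertical $\kappa$-maps. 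You instead compute $\Psi(\im(\kappa_{H}))$ directly from Dress's description of $\im(\kappa_{H})$ as the span of the $\Ind_{U}^{H}(\phi)$: the coprime-order decomposition $U=U_{1}\times U_{2}$, the vanishing of $\phi$ on the $2$-part $U_{2}$, and the compatibility of induction with direct products give one inclusion, while the subring and subfunctor structure of $\im(\kappa)$ (closure under multiplication and inflation) together with $\im(\kappa_{\gp{x}})=R_{\C}(\gp{x})$ and $\im(\kappa_{P})=R_{\Q}(P)$ give the other. This sidesteps the Green correspondence and the intermediate question of whether $T_{\OO}$ itself behaves tensor-multiplicatively over coprime factors, which is pleasant; what it buys in exchange for that extra machinery in the paper is a slightly more conceptual statement (an isomorphism at the level of $T_{\OO}$, not just of cokernels), but for the purpose of this lemma your argument is a clean shortcut.
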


\begin{proof}
Applying $R_{K}(\gp{x})\tensor_{\Z}$ to the short exact sequence $\im(\kappa_{P})\into R_{K}(P)\onto R_{K}(P)/\im(\kappa_{P})$ produces a short exact sequence
\begin{equation*}
	R_{K}(\gp{x})\tensor_{\Z}\im(\kappa_{P})\into R_{K}(\gp{x})\tensor_{\Z} R_{K}(P)\onto R_{K}(\gp{x})\tensor_{\Z}(R_{K}(P)/\im(\kappa_{P})).
\end{equation*}
Notice that the final group in the sequence above is equal to $R_{\C}(\gp{x})\tensor_{\Z}(\overline{R}_{\Q}/R_{\Q})(P)$. Now by Lemma \ref{lem:relprimetensor} there is a group isomorphism $R_{K}(\gp{x})\tensor_{\Z} R_{K}(P)\iso R_{K}(H)$. This isomorphism is part of a commutative diagram
\begin{equation*}
	\begin{tikzcd}
		T_{\OO}(\gp{x})\tensor_{\Z} T_{\OO}(P) \arrow[d, "\kappa_{\gp{x}}\tensor\kappa_{P}"'] \arrow[r] & T_{\OO}(H) \arrow[d, "\kappa_{H}"] \\
		R_{K}(\gp{x})\tensor_{\Z} R_{K}(P) \arrow[r, "\sim"]                                            & R_{K}(H)                          
	\end{tikzcd}
\end{equation*}
where the top horizontal map is induced by $[M]\tensor[N]\mapsto[M\tensor_{\OO}N]$. Surjectivity of this map follows from \cite[Theorem~1]{Dress_1975} and the assumption that $|x|$ and $|P|$ are coprime. In fact, the top horizontal map is an isomorphism: indeed, the Green Correspondence implies that the $\Z$-rank of $T_{\OO}(H)$ is equal to $|x|\cdot\rank_{\Z}T_{\OO}(P)$. By Brauer's Induction Theorem $\kappa_{\gp{x}}$ is surjective, hence bijective after comparing $\Z$-ranks. It follows that the $\Z$-ranks of $T_{\OO}(\gp{x})\tensor_{\Z} T_{\OO}(P)$ and $T_{\OO}(H)$ coincide and the top horizontal map is an isomorphism. In particular, the cokernels of the vertical maps above are isomorphic. The result follows after observing that $\im(\kappa_{\gp{x}}\tensor\kappa_{P})=R_{K}(\gp{x})\tensor_{\Z}\im(\kappa_{P})$.
\end{proof}

We now determine the subfunctor structure of $R_{K}/\im(\kappa)$ in the nontrivial case $p=2$. We continue to make use of the notation set in Lemma \ref{lem:repthyquatgp}. In particular,
\begin{equation*}
\gamma_{n}=\sum_{\substack{\chi\in\Irr(Q_{2^{n}})\\\text{faithful}}}\chi
\end{equation*}
is a representative for the unique nonzero coset in $(R_{K}/\im(\kappa))(Q_{2^{n}})$. We denote this coset, abusively, by $\gamma_{n}$.

The following theorem (and its proof) should be compared with Theorem \ref{thm:subfunctorsofRbarQ/RQ}.

\begin{theorem}\label{thm:subfunctorsofRK/imkappa}
	Set $p=2$. For each integer $n\geq 3$ let $F_{n}$ denote the subfunctor of the $\mu_{2'}$-fibered biset functor $R_{K}/\im(\kappa)$ generated by $\gamma_{n}$. Then the subfunctors $F_{n}$ are precisely the nonzero subfunctors of $R_{K}/\im(\kappa)$. We have
	\begin{equation*}
		R_{K}/\im(\kappa)=F_{3}\supsetneq F_{4}\supsetneq F_{5}\supsetneq\cdots.
	\end{equation*}
	In particular, $R_{K}/\im(\kappa)$ is generated by $\gamma_{3}$ and if $\im(\kappa_{G})\neq R_{K}(G)$ for some finite group $G$ then a Sylow $2$-subgroup of $G$ has a subquotient isomorphic to the quaternion group $Q_{8}$.
\end{theorem}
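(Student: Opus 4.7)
The plan is to parallel the proof of Theorem \ref{thm:subfunctorsofRbarQ/RQ} while accounting for the richer supply of morphisms in $\CC^{\mu_{2'}}$. The key observation on the $2$-group side is that any homomorphism from a subgroup of $P \times Q$ into $\mu_{2'}$ is trivial when $P$ and $Q$ are $2$-groups, so $B^{\mu_{2'}}(P,Q) = B(P,Q)$ and $\CC_{2}$ embeds in $\CC^{\mu_{2'}}$ as a full subcategory. Consequently the restriction of each $F_n$ to $\CC_{2}$ coincides with the subfunctor of $\overline{R}_{\Q}/R_{\Q}$ of the same name in Theorem \ref{thm:subfunctorsofRbarQ/RQ}. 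The descending chain $F_n \supseteq F_{n+1}$ then follows from $\Ind_{Q_{2^{n-1}}}^{Q_{2^n}}(\gamma_{n-1}) = \gamma_n$ in Lemma \ref{lem:repthyquatgp}(e), and strictness follows from $F_n(Q_{2^{n-1}}) = 0$ (by the $2$-biset argument of Theorem \ref{thm:subfunctorsofRbarQ/RQ}) together with $\gamma_{n-1} \in F_{n-1}(Q_{2^{n-1}})$.

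Let $F$ be an arbitrary subfunctor of $R_{K}/\im(\kappa)$. Lemma \ref{lem:robertsinductionlemma} applied modulo $\im(\kappa_G)$ yields, for any $\bar\theta \in F(G)$, a decomposition
\begin{equation*}
\bar\theta = \sum_{i} a_i \Ind_{H_i}^{G}\bigl(\phi_i \Res_{H_i}^{G}(\bar\theta)\bigr),
\end{equation*}
with each $H_i$ quasi-elementary with normal Sylow $2$-subgroup and $\phi_i:H_i \to \mu_{2'}$. When $H_i$ is quasi-elementary for an odd prime its Sylow $2$-subgroup is cyclic; the resulting direct-product decomposition $H_i = \gp{x_2} \times H'$ with $|H'|$ odd, combined with the argument of Lemma \ref{lem:RKforeltyfor2group}, forces $R_{K}(H_i)/\im(\kappa_{H_i}) = 0$ and that summand vanishes. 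This reduces the task to controlling $F(H)$ for $H = \gp{x} \times P$ elementary for $2$.

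For such $H$, tensoring the isomorphism of Lemma \ref{lem:lemforlater} by $R_{\C}(\gp{x})$ and invoking Lemma \ref{lem:RKforeltyfor2group} gives the splitting
\begin{equation*}
\bar\eta = \sum_{S \in \mathcal{Q}} \Indinf_{\gp{x}\times N_{P}(S)/S}^{H}\bigl(\Defres_{\gp{x}\times N_{P}(S)/S}^{H}(\bar\eta)\bigr)
\end{equation*}
for every $\bar\eta \in R_{K}(H)/\im(\kappa_{H})$, with the $S$-summand lying in $R_{\C}(\gp{x}) \tensor_{\Z} \F_{2}\gamma_{m}$ whenever $N_{P}(S)/S \iso Q_{2^{m}}$. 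Applying $\Def \circ \Mult(\lambda^{-1})$ for each $\lambda \in \Irr(\gp{x})$ isolates individual coefficients of the summand and shows that the vanishing of $F(Q_{2^m})$ forces the vanishing of $F(\gp{x}\times Q_{2^m})$; conversely every basis element $\lambda \tensor \gamma_m$ is the image of $\gamma_m$ under $\Ind \circ \Mult(\lambda) \circ \Inf$, hence lies in $F_m(\gp{x}\times Q_{2^m})$. Two conclusions follow: any subfunctor of $R_{K}/\im(\kappa)$ whose restriction to $\CC_2$ is zero must itself be zero; and if $n \geq 3$ is minimal with $F(Q_{2^n}) \neq 0$, then $\gamma_n \in F(Q_{2^n})$ yields $F_n \subseteq F$, while the minimality of $n$ kills the $S$-summands with $m < n$ and places the remaining summands in $F_n(H)$, yielding $F \subseteq F_n$. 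Applied to $F = R_{K}/\im(\kappa)$, which is nonzero because $(\overline{R}_\Q/R_\Q)(Q_8) \neq 0$, this identifies $R_{K}/\im(\kappa) = F_3$.

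The main obstacle is the elementary-for-$2$ analysis in the third paragraph, which assembles the tensor decomposition of Lemma \ref{lem:RKforeltyfor2group} with Lemma \ref{lem:lemforlater} and exploits the twist morphisms on the $\gp{x}$-factor. For the final assertion, if $(R_{K}/\im(\kappa))(G) \neq 0$ then generation by $\gamma_3$ yields $u \in B^{\mu_{2'}}(G,Q_8)$ with $u(\gamma_3) \neq 0$. The standard Goursat-type decomposition of $u$ (all twists being trivial, since they would factor through the $2$-group $P_1/K_1 \iso P_2/K_2$) takes the form
\begin{equation*}
u = \Indinf_{P_1/K_1}^{G} \circ \Iso(f) \circ \Defres_{P_2/K_2}^{Q_8},
\end{equation*}
and the nonvanishing of $\Defres_{P_2/K_2}^{Q_8}(\gamma_3)$, analyzed exactly as in the proof of Theorem \ref{thm:subfunctorsofRbarQ/RQ}, forces $P_2 = Q_8$ and $K_2 = 1$. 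Hence $P_1/K_1 \iso Q_8$ is a section of $G$, and such a section necessarily appears as a subquotient of a Sylow $2$-subgroup of $G$.
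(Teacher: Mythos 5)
Your proposal follows the same essential route as the paper: the identification $\CC_{2}^{\mu_{2'}}=\CC_{2}$ and the resulting equality $\RES(F_n)=F_n$ (as a $2$-biset functor), the strict descending chain via Lemma \ref{lem:repthyquatgp}(e), the reduction via Lemma \ref{lem:robertsinductionlemma} to elementary-for-$2$ subgroups, the $\F_{2}$-basis from Lemmas \ref{lem:RKforeltyfor2group} and \ref{lem:lemforlater}, the coefficient-isolation via $\Defres\circ\Def\circ\Mult(\lambda^{-1}\tensor 1)$, and the Goursat decomposition for the $Q_8$ subquotient statement. The main organizational difference is that you classify all subfunctors directly from the basis expansion (take minimal $n$ with $F(Q_{2^n})\neq 0$, show both inclusions $F_n\subseteq F\subseteq F_n$), whereas the paper first proves $R_K/\im(\kappa)=F_3$ by exhibiting the surjection $\overline{\alpha}:\IND(\overline{R}_{\Q}/R_{\Q})\to R_K/\im(\kappa)$ inherited from Theorem \ref{thm:alphaonto}, then proves $\cap_{n\geq 3}F_n=0$ and the maximality of each $F_n$ in $F_{n-1}$, and only then assembles the classification from those two facts. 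Your approach is arguably more direct and avoids the maximality lemma; the paper's approach has the feature that the $Q_8$ subquotient characterization is already available when it discards the odd quasi-elementary summands.

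The one spot that needs tightening is precisely that discard step. You write that $H_i$ quasi-elementary for an odd prime $q$ decomposes as $\gp{x_2}\times H'$ with $\gp{x_2}$ a cyclic $2$-group and $|H'|$ odd, and then invoke ``the argument of Lemma \ref{lem:RKforeltyfor2group}.'' But that lemma, as stated, applies to groups $\gp{x}\times P$ with $|x|$ odd and $P$ a $2$-group --- not to the mirror situation where the $2$-part is cyclic and the odd-order complement $H'=\gp{x_{2'}}\semil Q$ is generally non-cyclic. The conclusion $R_K(H_i)/\im(\kappa_{H_i})=0$ is still correct, but to justify it without circularity you should either (i) run the argument of Lemma \ref{lem:RKforeltyfor2group} anew with the roles of the odd part and the $2$-part reversed (using that $\kappa_{H'}$ is surjective by Brauer induction and $\im(\kappa_{\gp{x_2}})=R_\Q(\gp{x_2})=R_K(\gp{x_2})$ for a cyclic $2$-group), or (ii) argue directly: by Goursat and the coprimality of $|H'|$ and $|\gp{x_2}|$, every monomial pair $(U,\phi)$ for $H_i$ splits as $(U_1\times U_2,\phi_1\tensor 1)$ with $U_1\subgp H'$, $U_2\subgp\gp{x_2}$, and then Brauer's theorem on $H'$ together with Ritter--Segal (or the elementary computation) on $\gp{x_2}$ and Lemma \ref{lem:relprimetensor} give $\im(\kappa_{H_i})=R_K(H_i)$. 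The paper sidesteps this by proving the $Q_8$ subquotient claim first and citing it here, which is why its ordering of the steps differs from yours.

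Finally, the displayed ``splitting'' formula
\begin{equation*}
\bar\eta=\sum_{S\in\mathcal{Q}}\Indinf_{\gp{x}\times N_P(S)/S}^{H}\bigl(\Defres_{\gp{x}\times N_P(S)/S}^{H}(\bar\eta)\bigr)
\end{equation*}
is not quite what you use and would itself need a Mackey-type verification; what you actually invoke in the next sentence (applying $\Def_P^H\circ\Mult(\lambda^{-1}\tensor 1_P)$ and then $\Defres_{N_P(S)/S}^P$ to read off $a_{\lambda,S}\gamma_S$) is the correct mechanism and is what the paper uses. I would drop the displayed formula and keep the coefficient-extraction argument.
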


\begin{proof}
	By an argument similar to one employed in the proof of Theorem \ref{thm:subfunctorsofRbarQ/RQ} we have $F_{n}\subseteq F_{n-1}$ for each integer $n\geq 4$, and since $\RES(F_{n})$ is the subfunctor of the $2$-biset functor $\overline{R}_{\Q}/R_{\Q}$ generated by $\gamma_{n}$ each containment is proper: $F_{n}\subsetneq F_{n-1}$. Now $\overline{R}_{\Q}/R_{\Q}$ is generated (as a $2$-biset functor) by $\gamma_{3}$, so it is clear from the definition that $\IND(\overline{R}_{\Q}/R_{\Q})$ is generated (as a $\mu_{2'}$-fibered biset functor) by $\id_{Q_{8}}\tensor\gamma_{3}$. It follows that $R_{K}/\im(\kappa)$ is generated by $\overline{\alpha}(\id_{Q_{8}}\tensor\gamma_{3})=\gamma_{3}$, i.e., $R_{K}/\im(\kappa)=F_{3}$.
	
	Suppose that $\im(\kappa_{G})\neq R_{K}(G)$ for some finite group $G$. Then, by what we have just shown, there must exist a transitive $\mu_{2'}$-fibered $(G,Q_{8})$-biset $X$ such that
	\begin{equation*}
		(R_{K}/\im(\kappa))([X])(\gamma_{3})\neq 0.
	\end{equation*}  
	By \cite[Propositions~1.3 and 2.8]{Boltje_2018} there exist sections $(P_{1},K_{1})$ of $G$ and $(P_{2},K_{2})$ of $Q_{8}$ such that $P_{1}/K_{1}\iso P_{2}/K_{2}$ and $[X]$ factors through $\Defres_{P_{2}/K_{2}}^{Q_{8}}$ (note: this requires the fact that any homomorphism from a subgroup of $Q_{8}$ to $\mu_{2'}$ is trivial). Since $[X]$ does not annihilate $\gamma_{3}$ neither does $\Defres_{P_{2}/K_{2}}^{Q_{8}}$. Arguing as in the proof of Theorem \ref{thm:subfunctorsofRbarQ/RQ}, we find that $P_{2}=Q_{8}$ and $K_{2}=\set{1}$. It follows that $G$, hence a Sylow $2$-subgroup of $G$, has a subquotient isomorphic to $Q_{8}$. A similar argument shows that if $F_{n}(G)\neq\set{0}$ then $G$ has a subquotient isomorphic to $Q_{2^{n}}$. This implies in particular that $\cap_{n\geq 3}F_{n}=0$.
	
	Next we show that if $F$ is a subfunctor of $R_{K}/\im(\kappa)$ satisfying $F_{n}\subseteq F\subsetneq F_{n-1}$, some $n\geq 4$, then $F=F_{n}$. Let $F$ be such a subfunctor and suppose, by way of contradiction, that $F_{n}\subsetneq F$. Then there exists a group $G$ such that $F_{n}(G)\subsetneq F(G)$. Let $\theta\in F(G)\smallsetminus F_{n}(G)$. Then by Lemma \ref{lem:robertsinductionlemma} and by naturality of the projection $R_{K}\onto R_{K}/\im(\kappa)$ we may write
	\begin{equation}\label{eqn:thetaindmultres}
		\theta=\sum a_{i}(\Ind_{H_{i}}^{G}\circ\Mult(\phi_{i})\circ\Res_{H_{i}}^{G})(\theta)
	\end{equation}
	for some integers $a_{i}$, subgroups $H_{i}$ of $G$, and homomorphisms $\phi_{i}:H_{i}\to\mu_{2'}$. Moreover, we may assume that the $H_{i}$ are quasi-elementary groups that normalize their Sylow $2$-subgroups. In fact, we may assume that each $H_{i}$ is quasi-elementary for $2$ since if some $H_{i}$ is quasi-elementary for an odd prime then $H_{i}$ has no subquotient isomorphic to $Q_{8}$, hence $\im(\kappa_{H_{i}})=R_{K}(H_{i})$ and $\Res_{H_{i}}^{G}(\theta)=0$. But any subgroup $H_{i}$ that is quasi-elementary for $2$ and has a normal Sylow $2$-subgroup is elementary for $2$, so we may even assume that each subgroup $H_{i}$ is elementary for $2$. 
	
	Now suppose that $\Res_{H}^{G}(\theta)\in F_{n}(H)$ for all subgroups $H$ of $G$ that are elementary for $2$. Then in particular $\Res_{H_{i}}^{G}(\theta)\in F_{n}(H_{i})$ for each subgroup $H_{i}$ of the previous paragraph, so $\theta\in F_{n}(G)$ by equation (\ref{eqn:thetaindmultres}). Since we are assuming $\theta\notin F_{n}(G)$ there must exist a subgroup $H$ of $G$ that is elementary for $2$ and such that $\Res_{H}^{G}(\theta)\in F(H)\smallsetminus F_{n}(H)$. Write $H=\gp{x}\times P$ where $|x|$ is odd and $P$ is a $2$-group. Let $\mathcal{G}$ be a genetic basis of $P$ and for each $S\in\mathcal{G}$ such that $N_{P}(S)/S$ is generalized quaternion let $\gamma_{S}$ denote the unique nonzero element in $(\overline{R}_{\Q}/R_{\Q})(N_{P}(S)/S)$. Finally, for each integer $k\geq 3$ set
	\begin{equation*}
		\mathcal{Q}_{k}=\set{S\in\mathcal{G}:N_{P}(S)/S\iso Q_{2^{m}}\text{ for some }m\geq k}.
	\end{equation*}
	By Lemma \ref{lem:lemforlater} the set $\set{\Indinf_{N_{P}(S)/S}^{P}(\gamma_{S}):S\in\mathcal{Q}_{k}}$ is an $\F_{2}$-basis of $F_{k}(P)$. For simplicity, write $\overline{\chi}_{S}$ in place of $\Indinf_{N_{P}(S)/S}^{P}(\gamma_{S})$. By Lemma \ref{lem:RKforeltyfor2group} an $\F_{2}$-basis of $R_{K}(H)/\im(\kappa_{H})$ is given by the set $\set{\lambda\tensor\overline{\chi}_{S}:\lambda\in\Irr(\gp{x}),S\in\mathcal{Q}_{3}}$. We claim that
	\begin{equation*}
		F_{k}(H)=\spn(\lambda\tensor\overline{\chi}_{S}:\lambda\in\Irr(\gp{x}),S\in\mathcal{Q}_{k}).
	\end{equation*}
	To see why, first note that for any $\lambda\in\Irr(\gp{x})$ and $S\in\mathcal{Q}_{k}$ we have 
	\begin{equation*}
		\lambda\tensor\overline{\chi}_{S}=(\Mult(\lambda\tensor 1_{P})\circ\Inf_{P}^{H})(\overline{\chi}_{S})\in F_{k}(H).
	\end{equation*}
	On the other hand if $\sum a_{\lambda,S}\lambda\tensor\overline{\chi}_{S}\in F_{k}(H)$ --- where the sum is taken over all $\lambda\in\Irr(\gp{x})$ and $S\in\mathcal{Q}_{3}$ --- then for any $\mu\in\Irr(\gp{x})$ we have
	\begin{equation*}
		(\Def_{P}^{H}\circ\Mult(\mu^{-1}\tensor 1_{P}))(\sum_{\substack{\lambda\in\Irr(\gp{x}) \\ S\in\mathcal{Q}_{3}}}a_{\lambda,S}\lambda\tensor\overline{\chi}_{S})=\sum_{S\in\mathcal{Q}_{3}}a_{\mu,S}\overline{\chi}_{S}\in F_{k}(P).
	\end{equation*}
	Thus if $a_{\mu,S}\neq 0$ for some $S\in\mathcal{Q}_{3}$ we must have $S\in\mathcal{Q}_{k}$. This completes the proof of the claim. Now write
	\begin{equation*}
		\Res_{H}^{G}(\theta)=\sum_{\substack{\lambda\in\Irr(\gp{x}) \\ S\in\mathcal{Q}_{n-1}}}a_{\lambda,S}\lambda\tensor\overline{\chi}_{S}
	\end{equation*}
	for some $a_{\lambda,S}\in\F_{2}$. Since $\Res_{H}^{G}(\theta)\notin F_{n}(H)$ there must exist a character $\mu\in\Irr(\gp{x})$ and a genetic subgroup $T\in\mathcal{Q}_{n-1}\smallsetminus\mathcal{Q}_{n}$ such that $a_{\mu,T}\neq 0$. Notice that $N_{P}(T)/T\iso Q_{2^{n-1}}$. We have
	\begin{equation*}
		(\Defres_{N_{P}(T)/T}^{P}\circ\Def_{P}^{H}\circ\Mult(\mu^{-1}\tensor 1_{P})\circ\Res_{H}^{G})(\theta)=a_{\mu,T}\gamma_{T},
	\end{equation*}
	so $F(N_{P}(T)/T)\iso F(Q_{2^{n-1}})$ is nonzero. But then $\gamma_{n-1}\in F(Q_{2^{n-1}})$, i.e., $F_{n-1}\subseteq F$, which contradicts our assumption that $F\subsetneq F_{n-1}$. We conclude that $F_{n}$ is a maximal subfunctor of $F_{n-1}$ for all $n\geq 4$.
	
	Let $F$ be a nonzero subfunctor of $R_{K}/\im(\kappa)$. We complete the proof by showing that $F=F_{n}$ for some $n\geq 3$. Suppose first that $F(Q_{2^{n}})=\set{0}$ for all $n\geq 3$. Then $F\subseteq\cap_{n\geq 3}F_{n}$: if not, then there exists an integer $n\geq 4$ such that $F\subseteq F_{n-1}$ and $F\not\subseteq F_{n}$. We have $F_{n}\subsetneq F_{n}+F\subseteq F_{n-1}$, so $F_{n}+F=F_{n-1}$ since $F_{n}$ is a maximal subfunctor of $F_{n-1}$. But now
	\begin{equation*}
		\gamma_{n-1}\in F_{n-1}(Q_{2^{n-1}})=F_{n}(Q_{2^{n-1}})+F(Q_{2^{n-1}})=\set{0},
	\end{equation*}
	a contradiction. Thus $F\subseteq\cap_{n\geq 3}F_{n}$. Since $\cap_{n\geq 3}F_{n}=0$ (as noted above) we must have $F=0$, another contradiction. Therefore $F(Q_{2^{n}})\neq\set{0}$ for some integer $n$. Assume that $n$ is the minimum such integer. We show that $F=F_{n}$. It is clear that $F_{n}\subseteq F$ so we need only show that $F\subseteq F_{n}$. To this end, we prove by induction that $F\subseteq F_{k}$ for all integers $k$ in the range $3\leq k\leq n$. The base case $k=3$ is obvious: $F\subseteq R_{K}/\im(\kappa)=F_{3}$. Now if $F\subseteq F_{k}$ for some $k$, $3\leq k<n$, but $F\not\subseteq F_{k+1}$ then $F_{k}=F_{k+1}+F$, hence
	\begin{equation*}
		\gamma_{k}\in F_{k}(Q_{2^{k}})=F_{k+1}(Q_{2^{k}})+F(Q_{2^{k}})=F(Q_{2^{k}}).
	\end{equation*}
	But this contradicts the minimality of $n$. Thus if $F\subseteq F_{k}$ for some integer $k$ in the range $3\leq k<n$ then necessarily $F\subseteq F_{k+1}$. In particular $F\subseteq F_{n}$, as needed to complete the proof.
\end{proof}

In \cite[Theorem~9.2]{Boltje_2018}, Boltje and Co\c{s}kun parameterize the simple $A$-fibered biset functors $S_{(G,L,\lambda,V)}$ in terms of quadruples $(G,L,\lambda,V)$ where $G$ is a finite group, $(L,\lambda)$ is a \textit{reduced pair}, and $V$ is an irreducible module for a group ring $\Z\Gamma_{(G,L,\lambda)}$ (see \cite[Sections~8.1,~6.1]{Boltje_2018} for the respective definitions). Since, in the notation of the previous theorem, $F_{n}/F_{n+1}$ is a simple $\mu_{2'}$-fibered biset functor we have $F_{n}/F_{n+1}\iso S_{(G,L,\lambda,V)}$ for some such quadruple. Now the proof of \cite[Theorem~9.2]{Boltje_2018} shows that $G$ may be taken to be a group of smallest order for which $(F_{n}/F_{n+1})(G)\neq\set{0}$. Thus it is clear that we may take $G=Q_{2^{n}}$. By \cite[Corollary~10.13]{Boltje_2018}, the only reduced pair in this case is $(\set{1},1)$. Arguing as in the proof of \cite[Proposition~4.3.2]{Bouc_2010}, the group $\Gamma_{(Q_{2^{n}},\set{1},1)}$ is isomorphic to $\Out(Q_{2^{n}})$. The proof of \cite[Theorem~9.2]{Boltje_2018} then makes clear that we may take $V=\F_{2}$, where $V$ is given the unique $\Z\Out(Q_{2^{n}})$-module structure. Thus:
\begin{equation*}
	F_{n}/F_{n+1}\iso S_{(Q_{2^{n}},\set{1},1,\F_{2})}\qquad n\geq 3.
\end{equation*}

The corollary below follows directly from Lemma \ref{lem:robertsinductionlemma} and the arguments used in the proof of Theorem \ref{thm:subfunctorsofRK/imkappa}.

\begin{corollary}\label{cor:detectionthm}
	Let $G$ be a finite group and let $\chi\in R_{K}(G)$. Then $\chi\in\im(\kappa_{G})$ if and only if $\Res_{H}^{G}(\chi)\in\im(\kappa_{H})$ for all subgroups $H$ of $G$ that are elementary for 2 and possess a subquotient isomorphic to the quaternion group $Q_{8}$.
\end{corollary}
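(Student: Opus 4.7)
The plan is to establish the forward direction by invoking functoriality and to establish the backward direction via Lemma \ref{lem:robertsinductionlemma} together with the structural characterization of $R_K/\im(\kappa)$ given in Theorem \ref{thm:subfunctorsofRK/imkappa}.

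The forward direction is immediate: since $\im(\kappa)\subseteq R_K$ is a $\mu_{p'}$-fibered subfunctor, the component $\kappa_H$ has image $\im(\kappa_H)$, and for any $H\subgp G$ the operation $\Res_H^G$ maps $\im(\kappa_G)$ into $\im(\kappa_H)$. By Corollary~\ref{cor:quotientisF2space} we may assume $p=2$ for the backward direction, since otherwise $\im(\kappa)=R_K$ and there is nothing to prove.

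Now assume $p=2$ and that $\Res_H^G(\chi)\in\im(\kappa_H)$ for every subgroup $H$ of $G$ that is elementary for $2$ and possesses a subquotient isomorphic to $Q_8$. Applying Lemma~\ref{lem:robertsinductionlemma} to $\chi$ I would write
\begin{equation*}
\chi=\sum_i a_i\Ind_{H_i}^{G}\bigl(\phi_i\Res_{H_i}^{G}(\chi)\bigr)
\end{equation*}
for some integers $a_i$, subgroups $H_i\subgp G$ which are quasi-elementary and normalize their Sylow $2$-subgroups, and homomorphisms $\phi_i:H_i\to\mu_{2'}$. It then suffices to show that each summand lies in $\im(\kappa_G)$. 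Fix an index $i$ and split into cases according to which prime $q$ makes $H_i$ quasi-elementary. If $q$ is odd then the Sylow $2$-subgroup of $H_i=\gp{x}\semil Q$ is contained in $\gp{x}$, hence is cyclic, and therefore has no subquotient isomorphic to $Q_8$. On the other hand, if $q=2$ then $H_i=\gp{x}\semil P$ with $P$ a normal Sylow $2$-subgroup, and since $|\gp{x}|$ and $|P|$ are coprime the semidirect product reduces to a direct product $H_i=\gp{x}\times P$, so $H_i$ is elementary for $2$ with Sylow $2$-subgroup $P$.

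In the first case the Sylow $2$-subgroup of $H_i$ has no $Q_8$ subquotient, so Theorem~\ref{thm:subfunctorsofRK/imkappa} gives $\im(\kappa_{H_i})=R_{K}(H_i)$, whence $\phi_i\Res_{H_i}^{G}(\chi)\in\im(\kappa_{H_i})$ trivially. In the second case, either the Sylow $2$-subgroup $P$ of $H_i$ has no $Q_8$ subquotient, so $\im(\kappa_{H_i})=R_K(H_i)$ again, or $P$ has such a subquotient and therefore $H_i$ itself does, placing $H_i$ among the subgroups covered by the hypothesis, so $\Res_{H_i}^{G}(\chi)\in\im(\kappa_{H_i})$. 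In every case $\phi_i\Res_{H_i}^{G}(\chi)\in\im(\kappa_{H_i})$ (using that $\im(\kappa)$ is a $\mu_{2'}$-fibered subfunctor, so closed under $\Mult(\phi_i)$), and then closure of $\im(\kappa)$ under $\Ind_{H_i}^G$ gives each summand in $\im(\kappa_G)$; summing yields $\chi\in\im(\kappa_G)$. The only nontrivial step is the case analysis, and I expect no real obstacle here because the key inputs -- Dress-type induction with $2$-subnormal support and the characterization of support of $R_K/\im(\kappa)$ via $Q_8$-subquotients -- are already in hand.
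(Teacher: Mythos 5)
Your proof is correct and follows essentially the same argument the paper intends: the forward direction by functoriality of the subfunctor $\im(\kappa)$, and the backward direction by applying Lemma \ref{lem:robertsinductionlemma} and then reducing, exactly as in the proof of Theorem \ref{thm:subfunctorsofRK/imkappa}, to the case of subgroups $H_i$ elementary for 2 (using that quasi-elementary for an odd prime forces a cyclic Sylow $2$-subgroup and hence $\im(\kappa_{H_i})=R_K(H_i)$, while quasi-elementary for $2$ with a normal Sylow $2$-subgroup is elementary for $2$). The only nitpick is the phrase ``since $|\gp{x}|$ and $|P|$ are coprime the semidirect product reduces to a direct product'': what actually forces $H_i=\gp{x}\times P$ is that both $\gp{x}$ (normal by the quasi-elementary structure) and $P$ (normal by the choice from $\mathcal{U}$) are normal, with coprimality giving trivial intersection.
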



\bibliographystyle{plain}
\bibliography{../../../../Bibliography/bibliography}

\begin{thebibliography}{10}

\bibitem{Barker_2008}
L.~Barker.
\newblock Rhetorical biset functors, rational ${p}$-biset functors and their
  semisimplicity in characteristic zero.
\newblock {\em Journal of Algebra}, 319(9):3810--3853, 2008.

\bibitem{Benson_1991_vol1}
D.~J. Benson.
\newblock {\em Representations and Cohomology I: Basic Representation Theory of
  Finite Groups and Associative Algebras}, volume~30 of {\em Cambridge Studies
  in Advanced Mathematics}.
\newblock Cambridge University Press, 1991.

\bibitem{Boltje_2018}
R.~Boltje and O.~Co\c{s}kun.
\newblock Fibered biset functors.
\newblock {\em Advances in Mathematics}, 339:540–--598, 2018.

\bibitem{Boltje_2020}
R.~Boltje and P.~Perepelitsky.
\newblock $p$-permutation equivalences between blocks of group algebras.
\newblock 2020.
\newblock arXiv:2007.09253 [math.GR].

\bibitem{Boltje_2019}
R.~Boltje and D.~Y{\i}lmaz.
\newblock The ${A}$-fibered {B}urnside ring as ${A}$-fibered biset functor in
  characteristic zero.
\newblock {\em Algebras and Representation Theory}, 2020.
\newblock doi: 10.1007/s10468-020-09994-6.

\bibitem{Bouc_1996}
S.~Bouc.
\newblock Foncteurs d'ensembles munis d'une double action.
\newblock {\em Journal of Algebra}, 183:664--736, 1996.

\bibitem{Bouc_2010}
S.~Bouc.
\newblock {\em Biset Functors for Finite Groups}, volume 1990 of {\em Lecture
  Notes in Mathematics}.
\newblock Springer-Verlag Berlin Heidelberg, 2010.

\bibitem{Brauer_1941}
R.~Brauer.
\newblock Investigations on group characters.
\newblock {\em Annals of Mathematics}, 42(4):936--958, 1941.

\bibitem{Curtis_1981_vol1}
C.~W. Curtis and I.~Reiner.
\newblock {\em Methods of Representation Theory With Applications to Finite
  Groups and Orders}, volume~1.
\newblock John Wiley \& Sons, Inc., 1981.

\bibitem{Dress_1971}
A.~Dress.
\newblock The ring of monomial representations {I}. structure theory.
\newblock {\em Journal of Algebra}, 18:137--157, 1971.

\bibitem{Dress_1975}
A.~Dress.
\newblock Modules with trivial source, modular monomial representations and a
  modular version of {B}rauer's induction theorem.
\newblock {\em Abhandlungen aus dem Mathematischen Seminar der Universität
  Hamburg}, 44:101--109, 1975.

\bibitem{Feit_1967}
W.~Feit.
\newblock {\em Characters of Finite Groups}.
\newblock Mathematics Lecture Notes. W. A. Benjamin, Inc., 1967.

\bibitem{Isaacs_1976}
I.~M. Isaacs.
\newblock {\em Character Theory of Finite Groups}.
\newblock Pure and Applied Mathematics. Academic Press, New York, 1976.

\bibitem{Ritter_1972}
J.~Ritter.
\newblock Ein induktionssatz f\"{u}r rationale charaktere von nilpotenten
  gruppen.
\newblock {\em Journal f\"{u}r die reine und angewandte Mathematik},
  (254):133--151, 1972.

\bibitem{Roquette_1958}
P.~Roquette.
\newblock Realisierung von darstellungen endlicher nilpotenter gruppen.
\newblock {\em Archiv der Mathematik}, 9:241--250, 1958.

\bibitem{Segal_1972}
G.~Segal.
\newblock Permutation representations of finite $p$-groups.
\newblock {\em The Quarterly Journal of Mathematics}, 23(4):375--381, 1972.

\bibitem{Serre_1977}
J.-P. Serre.
\newblock {\em Linear Representations of Finite Groups}, volume~42 of {\em
  Graduate Texts in Mathematics}.
\newblock Springer-Verlag New York, 1977.

\end{thebibliography}

\end{document}